\documentclass[letter,11pt, reqno]{amsart}
\usepackage[margin=3cm]{geometry}
\usepackage{amsthm}

 \newtheorem{theorem}{Theorem}
 \newtheorem{proposition}{Proposition}
 \newtheorem{rem}{Remark}
 
  \newtheorem{lemma}{Lemma}
  \newtheorem{corollary}{Corollary}
\usepackage{algorithm}
\usepackage{algpseudocode}
\usepackage{tikz}

\usepackage{amsmath, amssymb,amsfonts}
\usepackage{mathtools}
\usepackage{booktabs}
\usepackage{graphicx}
\usepackage{xcolor}
\usepackage{natbib}
\usepackage{adjustbox}
\usepackage{bm}
\usepackage{multirow}
\usepackage{tikz}
\usetikzlibrary{arrows.meta,positioning,calc,fit,shapes.geometric,decorations.pathreplacing}

\usepackage{hyperref}
\hypersetup{
  colorlinks=true,
  linkcolor=blue,
  citecolor=blue,
  urlcolor=blue
}

\usepackage{placeins}

\def\P{\mathbb{P}}
\def\E{\mathbb{E}}

\def\A{\mathcal{A}}
\def\RR{\mathcal{R}}

\definecolor{punkpink}{RGB}{255, 20, 147}

\usepackage{graphicx}
\usepackage{subcaption}

\usepackage{algorithm}
\usepackage{algpseudocode}

\algnewcommand\algorithmicinput{\textbf{Input:}}
\algnewcommand\algorithmicoutput{\textbf{Output:}}
\algnewcommand\Input{\item[\algorithmicinput]}
\algnewcommand\Output{\item[\algorithmicoutput]}

\usepackage{booktabs}
\usepackage{longtable}
\usepackage{makecell}

\let\origmaketitle\maketitle
\def\maketitle{
	\begingroup
	\def\uppercasenonmath##1{} 
	\let\MakeUppercase\relax 
	\origmaketitle
	\endgroup
}

\begin{document}

\title[]{\huge Exponential Conic Optimization for Multi-Regime Service System Design under Congestion and Tail-Risk Control}

\author[V. Blanco, M. Mart\'inez-Ant\'on \MakeLowercase{and} J. Puerto]{
{\large V\'ictor Blanco$^{\dagger}$, Miguel Mart\'inez-Ant\'on$^{\ddagger}$, and Justo Puerto$^{\ddagger}$}\medskip\\
$^\dagger$Institute of Mathematics (IMAG), Universidad de Granada\\
\texttt{vblanco@ugr.es}\\
$^\dagger$Institute of Mathematics (IMUS), Universidad de Sevilla\\
\texttt{mmartinez31@us.es}, \texttt{puerto@us.es}
}

\maketitle

\begin{abstract}
We study the design of single-facility service systems operating under multiple recurring regimes with service-level constraints on response times. Regime-dependent arrival and service rates induce hyperexponential response-time distributions, and the design problem selects regime-specific capacities to balance cost, congestion, fairness, and reliability. We propose a mixed-integer exponential conic optimization framework integrating SLA chance constraints, conflict-graph design restrictions, and CVaR-based tail-risk control. Although NP-hard, the problem admits an efficient decomposition scheme and tractable special cases. Computational experiments and a large-scale urban case study show substantial improvements over the current system, quantifying explicit trade-offs between efficiency, congestion control, fairness, and robustness. The framework provides a practical tool for congestion-aware and tail-control service system design.
\end{abstract}

\keywords{Hyperexponential queueing system;  Service-level agreement; Tail-risk control;  Conic optimization; Stochastic optimization; Integer programming}

\section{Introduction}

Service systems in both private and public sectors increasingly operate under
explicit performance requirements that regulate not only average congestion
levels but also the reliability of user response times.
Such requirements arise in customer support operations, logistics and
transportation services, emergency response systems, healthcare delivery, and
cloud computing infrastructures.
Service-level agreement (SLA), in these contexts, typically imposes
probabilistic guarantees, requiring congestion-induced delays to remain below
prescribed thresholds with high confidence.
Designing service systems that satisfy reliability requirements at minimum
operational cost is therefore a central problem in operations research.

A defining feature of modern service systems is demand heterogeneity.
Demand elements differ in their exogenous access characteristics, such as
network latency, travel time, or dispatch delay, while sharing access to
capacity-constrained service resources.
Accordingly, a user’s total response time model naturally decomposes into a
deterministic access component and a stochastic congestion component arising
from waiting and service at the facility.
Meaningful reliability guarantees must therefore account for heterogeneous
demand rather than relying solely on aggregate averages.

In addition, real systems rarely operate as a single homogeneous queue.
Instead, service capacity is often structured into a certain number of recurring
operating regimes that differ systematically in congestion conditions and
processing speed.
These regimes may represent alternative staffing configurations, distinct
service modes, specialized resources, or routing policies.
Examples include differentiated support tiers, parallel processing pipelines
in cloud systems, fast and slow treatment tracks in healthcare, and multiple
dispatch channels in emergency response.
Each regime behaves as a stable queueing system with its own arrival intensity
and service capacity.

We model such systems as collections of parallel
$\mathrm{M}/\mathrm{M}/1$ queues with regime-dependent arrival rates and
service capacities, where incoming demand is probabilistically routed across
regimes.
Conditional on the selected regime, response times follow the classical
exponential behavior of stable queues. However,
from the perspective of an arbitrary arrival, regime assignment is
random, and the overall response-time distribution becomes a mixture of
exponential components.
This hyperexponential structure arises endogenously from regime-based routing
and captures the empirically observed coexistence of typical performance and
occasional congestion episodes.
At the same time, it preserves analytical tractability, admitting closed-form
expressions for cumulative probabilities and expectations.

The goal of this paper is to exploit this structure for service system design.
We develop a flexible optimization framework in which regime-specific service
rates and routing-induced response-time distributions are decision variables.
SLA enforcement is modeled selectively: binary activation variables determine
which demand elements are protected by probabilistic response-time guarantees,
while coverage and fairness requirements are imposed through additional
constraints.
Beyond individual SLA compliance, the framework incorporates explicit control
of tail performance through convex risk measures, allowing the decision-maker to
limit the average response time experienced by the worst-performing fraction
of demand.
This provides protection against extreme degradation without imposing overly
conservative worst-case guarantees.

All these features are embedded in a unified mixed-integer conic optimization formulation
that leverages the analytical structure of exponential queues.
Exact expressions for response-time distributions lead to constraints that are
representable with exponential cones, yielding tractable models that jointly
enforce stability, selective SLA, congestion-sensitive loss functions,
and tail-risk control.
We show that the general design problem is NP-hard, while identifying
structured special cases, such as uniform service-level confidence parameters, admitting
polynomial-time solution procedures.

The contribution of this paper is thus a unified optimization framework for
the reliability-oriented design of regime-based Poisson service systems.
By integrating hyperexponential response-time structure, selective SLA
coverage, and risk-aware performance criteria within an exponential cone-representable formulation, the proposed approach provides both theoretical
insight and implementable decision-support tools.
Computational experiments illustrate the trade-offs between capacity
investment, congestion, and service reliability across alternative design
paradigms.

\subsection{Related Work}

This paper relates to several streams of the operations research literature on
queueing system design, service-level guarantees, and risk-aware optimization.
Our emphasis is on optimization-based approaches that explicitly embed queueing
performance measures into system design decisions.

Queueing theory form the backbone of service system analysis, capturing both
capacity limitations and the temporal interaction between arrivals and service
completions.
Classical models such as $\mathrm{M}/\mathrm{M}/1$ and $\mathrm{M}/\mathrm{D}/1$
yield tractable expressions for key performance measures and underpin a large
body of work on staffing, capacity planning, and control
\citep{Gross2008}.
However, empirical evidence consistently points to substantial heterogeneity and
variability in service times, often leading to heavy-tailed or multimodal
response-time distributions that are poorly approximated by single exponential
models \citep{hokstad78}.

Hyperexponential and, even generally, phase-type distributions address this
limitation by representing service times as mixtures of exponential regimes.
These models have been widely used in telecommunications, computer systems, and
cloud infrastructures to capture bursty behavior and rare congestion events
\citep{Asmussen2003,FeldmannWhitt1998}.
Despite their analytical tractability, hyperexponential models have been used
primarily for performance evaluation, with relatively limited attention devoted
to their systematic integration into optimization-based design frameworks.

Optimization models for service system design have a long tradition, particularly
in call centers and service operations.
Early and recent contributions study how staffing levels, service rates, and
routing policies can be optimized to balance operating costs and delay-related
performance measures
\citep{Borst2004,Garnett2002,goez2017second,GurvichWhitt2009,HarrisonZeevi2005,hassin2003queue,Long2024generalized,Tsitsiklis2017flexible,Zychlinski2023managing}.
These works typically rely on exponential service-time assumptions, diffusion
approximations, or asymptotic regimes, and they often focus on average delays or
probabilities of delay.
In contrast, the explicit optimization of regime-dependent parameters in
hyperexponential queueing systems, especially under tail-sensitive performance
criteria, remains relatively uncharted.

Several works have also analyzed service network design  and location problems under congestion effects, explicitly incorporating queueing performance into spatial decision models. Early contributions such as \citep{berman1985} study optimal server location on networks operating as $\mathrm{M}/\mathrm{G}/1$ queues, embedding waiting-time considerations into location decisions. Subsequent research has developed broader location frameworks with congestion, including models that account for interaction between spatial demand allocation and service delays \citep{berman2003location}, as well as location--allocation problems on congested networks \citep{aboolian2008location}. More recent contributions consider probabilistic covering formulations under congestion \citep{aboolian2022probabilistic} and integrated service system design models with stochastic demand, congestion, and consumer choice \citep{aboolian2022efficient}. While these studies incorporate congestion into location decisions, they typically rely on steady-state queueing metrics or average-delay approximations. In contrast, our approach embeds exact regime-dependent response-time distributions directly into the design problem and explicitly controls reliability and tail performance through risk-sensitive optimization.

Service-level agreement provides a formal mechanism to encode performance
requirements in many application domains, including telecommunications, cloud
computing, logistics, and healthcare
\citep{HarcholBalter2013,jain2002sla}.
In queueing-based optimization models, SLA is commonly imposed as probabilistic
constraints on response times or as constraints on the fraction of customers
served within a target time window
\citep{cheng2013staffing,soh2017call}.
Most existing approaches enforce these constraints uniformly across all demand
classes and rely on approximations or heuristics to maintain tractability.

Recent work has begun to relax uniformity assumptions by allowing selective or
prioritized service guarantees, motivated by heterogeneity in demand value or
delay sensitivity and differentiated service menus
\citep{Bertsimas2018Fairness,Caldentey2025designing,Daskin2013}.
Nevertheless, such approaches are rarely combined with explicit continuous-time
queueing dynamics and response-time chance constraints.

Tail-risk control has also received growing attention in optimization, notably
through coherent risk measures as conditional value-at-risk (CVaR)
\citep{RockafellarUryasev2000,RockafellarUryasev2002}.
CVaR-based formulations admit tractable convex representations and have been
successfully applied in finance \citep{Cesarone2025}, supply chain management, and network design
\citep{Ogryczak2003,puerto2017revisiting}.
Their application to queueing system design remains limited, and prior works
typically address worst-case or average performance rather than directly
regulating the distribution of response times across users.

Finally, recent advances in conic optimization have enabled exact convex
representations of nonlinear performance measures and risk constraints.
Exponential cone formulations, in particular, allow logarithmic and exponential
relationships to be embedded directly into optimization models
\citep{BenTalNemirovski2001,boydvandenberghe2004,Chares2009}.
While these tools have been exploited in congestion control and robust
optimization, their potential for queueing system design with explicit
response-time distributions has not been fully unraveled.

The present paper contributes to this literature by developing a unified
optimization framework that integrates hyperexponential queueing structure,
selective SLA enforcement, congestion-sensitive cost modeling, and CVaR-based
tail-risk control within a single conic formulation.
Unlike existing approaches, regime-dependent service capacities are treated as
endogenous design variables, exact response-time expressions are embedded through
exponential cones, and coverage and tail-risk characteristics can be tailored
without altering the underlying model structure.

\subsection{Contributions and Organization}

This paper develops an optimization framework for the design of
reliability-oriented service systems operating under multiple regimes.
Rather than focusing on congestion evaluation, we embed the exact
response-time structure of regime-based queues directly into
capacity-design decisions, enabling explicit control of coverage,
reliability, and tail risk.

Our contributions are fourfold.
First, we model regime-dependent response-time distributions explicitly
and treat service reliability and tail performance as primary design
objectives, moving beyond average-delay or steady-state analyses.
Second, we introduce endogenous selective service-level agreement,
allowing the protected demand set to be determined within the
optimization.
Third, we incorporate CVaR-type tail control directly on response times
and show that the resulting design problems admit exact
mixed-integer exponential cone formulations.
Fourth, we analyze the computational complexity of the model,
prove its NP-hardness, and derive a
polynomial-time algorithm for relevant cases.
Together, these results yield a unified and tractable framework that
transparently captures trade-offs between capacity investment,
congestion, coverage, and extreme-delay protection.

The remainder of the paper is organized as follows.
Section~\ref{sec:performance_structure} derives the regime-based response-time expressions, characterizing the exact distributional structure induced by multiple operating regimes and establishing the analytical foundations that support the subsequent design models.
Section~\ref{sec:design_framework} introduces the optimization framework, formulates the reliability-oriented capacity design problems, establishes their NP-hardness, and identifies relevant special cases that admit polynomial-time solution algorithms.
Section~\ref{sec:experiments} presents a detailed computational study based on a New York City EMS case study, illustrating the tractability of the proposed mixed-integer exponential conic formulations and quantifying the trade-offs between capacity investment, congestion, coverage, and tail-risk protection.
Finally, Section~\ref{sec:conclusions} concludes the paper by summarizing the main findings, discussing managerial implications, and outlining promising directions for future research.

\section{Performance Structure of Multi-Regime Parallel Service Systems}
\label{sec:performance_structure}

We consider a service system motivated by a wide range of management science and
operations research applications, including facility location, emergency response
systems, centralized service centers, and capacity planning under uncertainty.
Demand originates from a finite set of points $\mathcal{A}$, representing
geographic zones, customer segments, or origin locations in a network. Service is
provided through a finite collection of service regimes indexed by
$\mathcal{R}:=\{1,\ldots,R\}$.

Each regime represents an independent service channel, operating as an
$\mathrm{M}/\mathrm{M}/1$ queue with its own congestion characteristics. Customers
are assigned to exactly one regime \emph{prior to entering any queue}, and all
waiting and service experienced by a customer take place exclusively within the
queue associated with the selected regime. Therefore, regimes do not represent
time-varying operating states of a single server, but rather parallel service
channels that coexist simultaneously. This modeling structure arises naturally in many service settings where demand is
pre-classified or routed before processing. Examples include customer support
systems with distinct service tiers or specialized teams, healthcare operations
where patients are triaged into separate treatment tracks, cloud computing
platforms that route requests to different processing pipelines, and emergency
response systems where incidents are dispatched to alternative resources based on
location or severity. In all these cases, customers commit to a specific service
channel before experiencing congestion, and performance is governed by the load
and capacity of the selected channel rather than by time-varying system states.

For each demand point $a\in\mathcal{A}$ and each service regime $r\in\mathcal{R}$,
customers originating from $a$ and routed to regime $r$ arrive according to an
independent Poisson process with rate $\lambda_{ar}\ge 0$. These arrival processes
are assumed independent across demand points and regimes. The total arrival rate
associated with demand point $a$ is $\lambda_a := \sum_{r\in\mathcal{R}} \lambda_{ar}$,
while the aggregate arrival rate to regime $r$ is $\Lambda_r := \sum_{a\in\mathcal{A}} \lambda_{ar}$.

By the superposition property of Poisson processes, arrivals to each regime form a
Poisson process with rate $\Lambda_r$. This viewpoint allows the composition of
service regimes to vary across demand points and captures heterogeneity in routing
patterns and service requirements.

The effective arrival rates $(\Lambda_r)_{r\in\RR}$ depend on how demand is distributed across operating regimes.
From a practical standpoint, the proposed methodology is deliberately flexible and can accommodate
different system architectures, routing mechanisms, and levels of operational control.
Let each demand point $a\in\A$ generate an exogenous demand rate $\lambda_a \ge 0$.
In applied settings, the following arrival rate structures naturally arise.

We consider a regime-based structure in which the system operates under regime 
$r \in \mathcal R$ with probability $\pi_r \ge 0$, where 
$\sum_{r\in\mathcal R} \pi_r = 1$. In this setting, regime $r$ receives an arrival inflow $\Lambda_r = \pi_r \sum_{a\in\mathcal A} \lambda_a$, so that the overall arrival rate $\Lambda := \sum_{a\in\mathcal A} \lambda_a$ 
is proportionally distributed across regimes according to the regime probabilities. 

This formulation captures environments in which regimes represent recurring  operational conditions—such as peak and off-peak periods, staffing configurations, or congestion states—rather than user-dependent routing policies. Each regime 
therefore corresponds to a distinct service configuration, and the overall  response-time distribution arises as a mixture of regime-specific performance  measures weighted by $\pi_r$.

Each arriving customer must first complete a deterministic transportation or
access phase before entering the service system. For a customer originating at
demand point $a$, this transportation time is denoted by $t_a\ge 0$ and is assumed to be
known. This component captures physical travel time, communication latency, or any
other exogenous delay that is independent of congestion at the service regimes.
Transportation time contributes additively to the total response time but does not
affect the queueing dynamics within the service system.

Upon completion of transportation, each customer is routed to exactly one service
regime $r$ according to the routing structure encoded by the rates
$\lambda_{ar}$. The customer then joins the corresponding queue, which operates
under a first-come, first-served discipline with unlimited waiting space. Conditional
on regime $r$, service times are exponentially distributed with rate $\mu_r>0$.
Thus, regime $r$ behaves as a stable $\mathrm{M}/\mathrm{M}/1$ queue provided that
$\Lambda_r < \mu_r$.

Figure~\ref{diagram} illustrates the system structure. Customers originate at demand
points, incur deterministic transportation times, are routed to one of the service
regimes, and then experience waiting and service according to the congestion level
of the selected regime.
\begin{figure}
{\begin{tikzpicture}[
    >=Latex,
    scale=1.2,
    demand/.style   = {draw,fill=blue!30,minimum size=6mm},
    inqueue/.style  = {draw,fill=blue!20,minimum width=4mm,minimum height=4mm},
    queuebox/.style = {draw,fill=blue!5,rounded corners=1pt},
    regimebox/.style= {draw,fill=green!5,rounded corners=1pt},
    label/.style    = {font=\tiny},
    thinarr/.style  = {->,thin},
    thickarr/.style = {->,line width=0.9pt},
    every node/.style = {align=center}
]
\coordinate (D0)  at (0,0);
\coordinate (Split) at (1,0);   
\coordinate (Q1)  at (5.1,1.6);
\coordinate (Q2)  at (5.1,0.0);
\coordinate (Q3)  at (5.1,-1.6);
\coordinate (Out) at (8.6,0);
\node[demand] (d1) at ($(D0)+(-0.8,0.9)$) {};
\node[demand] (d2) at ($(D0)+(-1.2,0.15)$) {};
\node[demand] (d3) at ($(D0)+(-0.9,-0.6)$) {};
\node[demand] (d4) at ($(D0)+(-1.6,-1.25)$) {$a$};
\node[label,anchor=east] at ($(D0)+(-1.2,1.4)$) {$\mathcal{A}$};
\node[label,anchor=west] at ($(D0)+(0.05,-1.1)$) {$t_a$};
\node[label,anchor=east] at ($(D0)+(1,-2.3)$) {Transportation Time};
\node[label] at ($(D0)+(4,-2.3)$) {Waiting Time};
\node[label] at ($(D0)+(6.8,-2.3)$) {Service Time};
\node[label] at ($(D0)+(5.6,2.2)$) {admission};
\node[draw,rounded corners=1pt,fill=gray!10,inner sep=5pt, minimum size=3.5mm] (router) at (Split) {\hspace*{0.2cm}};
\node[label,above,yshift=4pt] at (Split) {routing\\to\\regime};
\foreach \i/\y in {1/0.65, 2/0.1, 3/-0.35, 4/-0.9}{
  \draw[thinarr] (d\i.east) .. controls ($(d\i.east)+(0.35,\y)$) and ($(router.west)+(-0.45,\y)$) .. (router.west);
}
\draw[queuebox] ($(Q1)+(-2.2,-0.35)$) rectangle ($(Q1)+(0,0.35)$);
\node[label] at ($(Q1)+(-1.1,0.7)$) {Queue $r=1$};
\foreach \x in {-1.85,-1.35,-0.85,-0.35}{
  \node[inqueue] at ($(Q1)+(\x,0)$) {};
}
\draw[regimebox] ($(Q1)+(1,-0.38)$) rectangle ($(Q1)+(2.5,0.38)$);
\node[label] at ($(Q1)+(1.75,0)$) {$\text{M/M/1}$ \\ $\Lambda_1, \mu_{1}$};
\draw[queuebox] ($(Q2)+(-2.2,-0.35)$) rectangle ($(Q2)+(0,0.35)$);
\node[label] at ($(Q2)+(-1.1,0.7)$) {Queue $r=2$};
\foreach \x in {-1.85,-1.35,-0.85}{
  \node[inqueue] at ($(Q2)+(\x,0)$) {};
}
\draw[regimebox] ($(Q2)+(1,-0.38)$) rectangle ($(Q2)+(2.5,0.38)$);
\node[label] at ($(Q2)+(1.75,0)$) {$\text{M/M/1}$ \\ $\Lambda_2, \mu_2$};
\draw[queuebox] ($(Q3)+(-2.2,-0.35)$) rectangle ($(Q3)+(0,0.35)$);
\node[label] at ($(Q3)+(-1.1,0.7)$) {Queue $r=R$};
\foreach \x in {-1.85,-1.35}{
  \node[inqueue] at ($(Q3)+(\x,0)$) {};
}
\draw[regimebox] ($(Q3)+(1,-0.38)$) rectangle ($(Q3)+(2.5,0.38)$);
\node[label] at ($(Q3)+(1.75,0)$) 
{$\text{M/M/1}$ \\ $\Lambda_R, \mu_{R}$};
\node[label] at ($(Q2)+(-3.0,-0.3)$) {$\vdots$};
\draw[thickarr] ($(Q1)+(0,0)$) -- ($(Q1)+(1.0,0)$);
\draw[thickarr] ($(Q2)+(0,0)$) -- ($(Q2)+(1.0,0)$);
\draw[thickarr] ($(Q3)+(0,0)$) -- ($(Q3)+(1.0,0)$);
\draw[thickarr]
  (router.east)
  .. controls ($(router.east)+(0.8,0.9)$)
  .. node[midway,above,label] {$\Lambda_1$}
 ($(Q1)+(-2.2,0)$);
\draw[thickarr]
  (router.east)
  -- node[midway,above,label] {$\Lambda_2$}
 ($(Q2)+(-2.2,0)$);
\draw[thickarr]
  (router.east)
  .. controls ($(router.east)+(0.8,-0.9)$) 
  .. node[midway,below,label] {$\Lambda_R$}
  ($(Q3)+(-2.2,0)$);
\node[inqueue,minimum size=5.5mm,fill=red!35] (end) at (Out) {};
\node[label] at ($(end)+(0,0.6)$) {departures};
\draw[thickarr] ($(Q1)+(2.5,0)$) .. controls ($(Q1)+(3.2,0)$) and ($(end)+(-1.2,1.2)$) .. (end.west);
\draw[thickarr] ($(Q2)+(2.5,0)$) -- (end.west);
\draw[thickarr] ($(Q3)+(2.5,0)$) .. controls ($(Q3)+(3.2,0)$) and ($(end)+(-1.2,-1.2)$) .. (end.west);
\end{tikzpicture}}
{Service system structure with routing to parallel regime-specific $\mathrm{M}/\mathrm{M}/1$ queues.\label{diagram}}
{}
\end{figure}
For each regime $r\in\mathcal{R}$, the utilization factor is $0\leq \rho_r := \frac{\Lambda_r}{\mu_r}<1$. Under regime $r$, an arriving customer enters service immediately with probability
$1-\rho_r$ and experiences a positive waiting time with probability $\rho_r$. 

The following result states the distribution of the total response time of the system for each demand point $a\in\A$.

\begin{theorem}\label{th:hyperexponential}
The total (end-to-end) response time $R_a$ of the system for a demand point $a\in \A$ follows a shifted hyperexponential with cumulative distribution function
$$
\mathbb{P}(R_a \le t)
=
\begin{cases}
0 & \mbox{if $t < t_a$}, \\[6pt]
1-\displaystyle\sum_{r\in\mathcal{R}}
\pi_r \, e^{-(\mu_r-\Lambda_r)(t-t_a)} & \mbox{if $t \ge t_a$},
\end{cases}
$$
and expectation
$$
\mathbb{E}[R_a]
=
t_a + \sum_{r\in\mathcal{R}} \frac{\pi_r}{\mu_r-\Lambda_r},
$$
where mixture weights 
$\pi_r := \frac{\Lambda_r}{\sum_{r'\in\mathcal{R}} \Lambda_{r'}}$ for all $r\in\RR.$
\end{theorem}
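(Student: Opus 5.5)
The plan is to condition on the regime to which a customer from $a$ is routed, to use the classical sojourn-time distribution of a stable $\mathrm{M}/\mathrm{M}/1$ FCFS queue within that regime, and then to uncondition via the law of total probability. Write the total response time as $R_a = t_a + S$, where $S$ is the sojourn time (waiting plus service) in the queue of the selected regime. The transportation time $t_a$ is deterministic and does not affect queueing dynamics, so it acts only as a shift. Hence $\P(R_a\le t)=0$ for $t<t_a$, and $\P(R_a\le t)=\P(S\le t-t_a)$ otherwise.

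\textbf{Step 1: the routing probability.} Let $J$ denote the regime selected by the arriving customer. Under the regime-based structure, $\Lambda_r=\pi_r\Lambda$, so the regime assignment is independent of the origin $a$. By Poisson superposition and thinning, the probability that an arbitrary arrival from $a$ joins regime $r$ is $\lambda_{ar}/\lambda_a=\pi_r$. This equals $\Lambda_r/\sum_{r'}\Lambda_{r'}$, which matches the stated mixture weights. This identification is the one step needing care. In the general routing model the weights would be $\lambda_{ar}/\lambda_a$, and these differ from $\Lambda_r/\Lambda$ unless routing is proportional. I would therefore invoke the regime-based assumption $\lambda_{ar}=\pi_r\lambda_a$ explicitly here.

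\textbf{Step 2: conditional sojourn time.} Condition on $J=r$. Since arrivals to regime $r$ form a Poisson process of rate $\Lambda_r$, PASTA applies: an arriving customer sees the stationary number in system $N\sim\mathrm{Geom}$, with $\P(N=n)=(1-\rho_r)\rho_r^n$. By memorylessness, the sojourn time is a sum of $N+1$ i.i.d.\ $\mathrm{Exp}(\mu_r)$ variables. A geometric sum of exponentials is exponential, which follows from the Laplace transform
\[
\E\bigl[e^{-sS}\mid J=r\bigr]=\sum_{n\ge0}(1-\rho_r)\rho_r^n\Bigl(\tfrac{\mu_r}{\mu_r+s}\Bigr)^{n+1}=\frac{\mu_r-\Lambda_r}{\mu_r-\Lambda_r+s}.
\]
Hence $S\mid J=r\sim\mathrm{Exp}(\mu_r-\Lambda_r)$, which is well defined by the stability condition $\Lambda_r<\mu_r$. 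One subtlety should be stated: PASTA applies to the Poisson substream from $a$ as well, because that substream is an independent thinning of the regime-$r$ arrival process. Customers from $a$ therefore see the same stationary distribution as all other customers.

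\textbf{Step 3: unconditioning.} By the law of total probability, for $t\ge t_a$,
\[
\P(R_a\le t)=\sum_r \pi_r\bigl(1-e^{-(\mu_r-\Lambda_r)(t-t_a)}\bigr)=1-\sum_r\pi_r e^{-(\mu_r-\Lambda_r)(t-t_a)},
\]
using $\sum_r\pi_r=1$. Linearity of expectation then gives
\[
\E[R_a]=t_a+\sum_r\pi_r/(\mu_r-\Lambda_r).
\]
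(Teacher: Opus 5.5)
Your proof is correct and follows the same route as the paper: condition on the regime, use the $\mathrm{Exp}(\mu_r-\Lambda_r)$ sojourn time of a stable $\mathrm{M}/\mathrm{M}/1$ queue, mix with weights $\pi_r$, and shift by the deterministic $t_a$. The paper simply cites the exponential sojourn time, whereas you derive it via PASTA and the geometric-sum Laplace transform; you also state explicitly the proportional-routing assumption $\lambda_{ar}=\pi_r\lambda_a$, which the paper uses only implicitly when it takes the routing probability of a customer from $a$ to be $\Lambda_r/\sum_{r'}\Lambda_{r'}$.
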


\begin{proof}{Proof.}
A classical result for the $\mathrm{M}/\mathrm{M}/1$ queue implies that,
conditional on regime $r$, the stationary sojourn time at the facility, defined as the sum of the waiting and service times, is exponentially distributed with rate
$\mu_r-\Lambda_r$~\citep[see, e.g.,][]{kleinrock1975,Wolff1989}. Since an arbitrary arrival is routed to regime $r$ with probability
$$
\pi_r= \frac{\Lambda_r}{\sum_{r'\in\mathcal{R}} \Lambda_{r'}},
$$
the unconditional sojourn time $T$ of a random demand point follows a hyperexponential distribution with
mixture weights $\pi_r$ and exponential rates $\mu_r-\Lambda_r$. Therefore, it follows that the cumulative distribution function of $T$ is
$$
\mathbb{P}(T \le t)
=
1-\sum_{r\in\mathcal{R}} \pi_r \, e^{-(\mu_r-\Lambda_r)t}  \quad \text{ if }
\quad t \ge 0,
$$
and $0$ otherwise; and its expectation is $
\mathbb{E}[T]
=
\sum_{r\in\mathcal{R}} \frac{\pi_r}{\mu_r-\Lambda_r}$.

Finally, the total response time model of the system is defined as the sum of the deterministic transportation time plus the stochastic sojourn time ($R_a = t_a + T$). 
Since the transportation time $t_a$ is deterministic, the claim follows from shifting by $t_a$ the cumulative distribution
function and the expectation of $T$.
\end{proof}

In particular, the dependence of $\mathbb{E}[R_a]$ on $a$ arises exclusively
through the transportation time, $t_a$, while congestion effects are captured by
the regime-level service slack $\mu_r-\Lambda_r$ and the routing weights $\pi_r$.

This performance structure makes explicit how demand-specific arrival rates ($\lambda_{ar}$), service capacities ($\mu_r$), and transportation times ($t_a$) jointly
determine congestion and response-time distributions. The resulting closed-form
expressions for utilization and response-time probabilities provide a rigorous and
tractable foundation for optimization models that balance efficiency, equity, and
risk in regime-structured service system design.

Service-level agreement imposes probabilistic performance guarantees on the total
response time experienced by demand elements. For every demand $a\in\A$, fix a tolerance level $\alpha_a\in[0,1]$ and
a response-time threshold $t_a^\star\ge t_a$. The SLA requirement for element $a$
is expressed as a chance constraint of the form
\begin{equation}
\P(R_a \le t_a^\star) \ge 1-\alpha_a,
\label{eq:sla_chance}
\end{equation}
which enforces that the response-time target is met with high probability.

\begin{corollary}\label{coro:sla_explicit}
    Let $a\in\A$ be a demand point. Given a tolerance level  $0\le\alpha_a\le 1$ and a total response time threshold $t_a^\star\ge t_a$ for $a$, the corresponding {\rm SLA} requirement \eqref{eq:sla_chance} of the system admits an explicit representation as
    \begin{equation}
\sum_{r\in\mathcal{R}}
\pi_r\,e^{-(\mu_r-\Lambda_r)(t_a^\star-t_a)}
\;\le\;
\alpha_a.
\label{eq:sla_explicit}
\end{equation}
\end{corollary}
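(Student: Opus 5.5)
The plan is to substitute the closed-form distribution function from Theorem~\ref{th:hyperexponential} directly into the chance constraint \eqref{eq:sla_chance} and rearrange. No other ingredient is needed, since the theorem already provides an exact expression for $\P(R_a\le t)$ for every $t$.

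First, note that the threshold satisfies $t_a^\star\ge t_a$ by hypothesis. We are therefore always in the second branch of the piecewise formula in Theorem~\ref{th:hyperexponential}, and the first branch (which would give probability $0$) never applies. Evaluating at $t=t_a^\star$ yields
\[
\P(R_a\le t_a^\star)=1-\sum_{r\in\RR}\pi_r\,e^{-(\mu_r-\Lambda_r)(t_a^\star-t_a)}.
\]
This holds under the standing stability assumption $\Lambda_r<\mu_r$ for every $r\in\RR$. That assumption is implicit in the theorem, because it is exactly what makes each regime's sojourn time exponential with rate $\mu_r-\Lambda_r>0$.

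Second, insert this expression into \eqref{eq:sla_chance}:
\[
1-\sum_{r\in\RR}\pi_r\,e^{-(\mu_r-\Lambda_r)(t_a^\star-t_a)}\ge 1-\alpha_a.
\]
Cancelling the constant $1$ on both sides and multiplying by $-1$, which reverses the inequality, gives exactly \eqref{eq:sla_explicit}. Every step is an equivalence, so the two constraints are equivalent, which is what ``explicit representation'' means.

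The only step needing care is the boundary case. The statement allows $t_a^\star=t_a$, where every exponential equals $1$ and the left side of \eqref{eq:sla_explicit} becomes $\sum_r\pi_r=1$; the constraint then correctly reduces to $\alpha_a\ge 1$. This value is consistent with continuity of the distribution function at $t_a$, so the second branch still applies there. The extreme tolerances $\alpha_a\in\{0,1\}$ are covered by the same chain of equivalences without further argument.
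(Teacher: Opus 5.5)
Your proposal is correct and follows the same route as the paper, which states only that the corollary is a direct consequence of Theorem~\ref{th:hyperexponential}. You evaluate the shifted hyperexponential CDF at $t=t_a^\star\ge t_a$ and rearrange \eqref{eq:sla_chance} into \eqref{eq:sla_explicit}; your remarks on the implicit stability assumption and the boundary case $t_a^\star=t_a$ are accurate and simply make explicit what the paper leaves unstated.
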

\begin{proof}{Proof.}
    The result is a direct consequence of Theorem~\ref{th:hyperexponential}.
\end{proof}

Note that the left-hand side in \eqref{eq:sla_explicit} captures the contribution of each service regime to the tail probability
of excessive delay.

In the case of a single regime, $|\RR|=1$, the expression \eqref{eq:sla_explicit} simplifies to the following lower bound for the service rate of the queue
\begin{align}\label{eq:sla_1reg}
\mu_1 \geq \Lambda_1 - \frac{\log(\alpha_a)}{t_a^\star-t_a}.
\end{align}

In many practical settings, such guarantees are not required uniformly across the
entire demand population. Instead, service-level agreement is enforced only for
a selected subset of demand elements, reflecting contractual obligations,
regulatory requirements, or prioritization policies. Let
$S\subseteq\mathcal{A}$ denote the set of demand points for which the SLA condition
\eqref{eq:sla_chance} is imposed, while no probabilistic guarantee is required
for elements in $\mathcal{A}\setminus S$. This selective enforcement mechanism
plays a central role in the design framework developed in the next section, where
the protected set $S$ is determined endogenously as part of the optimization
process.

\begin{rem}
The framework developed in this paper exploits the analytical structure induced by
Poisson arrivals and exponential service within each regime, which yields closed-form
expressions for response-time distributions and enables compact exponential cone
representations of stability, service-level, and tail-risk constraints. This choice
is motivated by tractability and by the fact that exponential service provides the
simplest setting in which congestion and routing-induced variability interact.

The approach extends naturally to regimes with Erlang-$k$ service times. In this
case, each regime remains an $\mathrm{M}/\mathrm{G}/1$ queue, expected waiting and
sojourn times follow from standard Pollaczek--Khinchine results, and the
sojourn-time distribution admits a phase-type representation that can be expressed
as a finite combination of exponential and polynomial--exponential terms~\citep[see, e.g.,][]{Asmussen2003,Gross2008}. While the
resulting constraints are algebraically more involved, they preserve the essential
structure required for convex and conic optimization. Accordingly, the
exponential-service assumption should be viewed as a modeling choice rather than a
limitation.
\end{rem}

The design problems studied in this paper are formulated as optimization models
that explicitly exploit the analytical structure of the response-time
distributions derived above. In particular, the probabilistic service-level
constraints and congestion-sensitive cost terms involve exponential and
logarithmic relationships that arise naturally from the hyperexponential
sojourn-time expressions. These relationships are represented exactly using
exponential cone programming. The exponential cone
$\mathcal{K}_{\exp}\subset\mathbb{R}^3$ is defined as
$$
\mathcal{K}_{\exp}
:=
\left\{
(x,y,z)\in\mathbb{R}^3 :
x \ge y e^{z/y},\; y>0
\right\}
\;\cup\;
\left\{
(x,0,z)\in\mathbb{R}^3 :
z\leq 0\leq x
\right\},
$$
which is a proper cone that provides an exact conic representation of exponential and logarithmic inequalities. Its dual cone is given by
$$
\mathcal{K}_{\exp}^\ast
=
\left\{
(u,v,w)\in\mathbb{R}^3 :
eu \geq - w e^{v/w},\; w < 0\right\}
\;\cup\;
\left\{
(u,v,0): u,v\ge 0
\right\}.
$$
Within this framework, the service system design problem is expressed using
linear constraints, exponential cone constraints, and a limited number of
integer restrictions. Continuous decision variables encode quantities such as
service rates, congestion margins, and auxiliary variables associated with
response-time bounds, whereas binary variables are used to model the selective
activation of service-level guarantees. Optimization models of this nature belong
to the class of mixed-integer exponential cone programs, which generalize mixed-integer linear programming by allowing exponential
relationships to be represented without approximation. Modern conic optimization solvers, as \cite{mosek},
support $\mathcal{K}_{\rm exp}$ directly, enabling the exact and tractable
integration of queueing-based performance constraints, stability conditions, and
tail-risk controls into a unified optimization model.

\section{Design Framework and Optimization Model}
\label{sec:design_framework}

This section introduces a unified optimization framework for the design of a multi-regime parallel service system under a conflict-aware reliable service coverage, concerned about cost-congestion tradeoffs and least-favored users' tail care. The framework
supports the joint selection of regime-dependent service capacities and the service quality-driven protected segment of the demand population. These decisions are made while explicitly enforcing stability conditions and controlling response-time reliability, and tail-risk performance at the population level. \\

\noindent{\bf Decision Variables}\\

\noindent In what follows, we describe the decision variables used in our model.
\begin{itemize}
\item[-] \textbf{Service rates.} The primary design variables are the regime-dependent service rates
$\mu_r$, $r\in\mathcal{R}$, which represent staffing levels, server speeds, or
effective processing capacities assigned to each service regime. These variables
directly determine congestion through the service slack $\mu_r-\Lambda_r$ and
therefore govern both expected response times and tail performance.
\item[-] \textbf{SLA-protected set.} To model the selective enforcement of service-level agreement, we introduce
binary activation variables $s_a\in\{0,1\}$ for each demand point
$a\in\mathcal{A}$. The value $s_a=1$ indicates that demand point $a$ belongs to the
protected set $S\subseteq\mathcal{A}$ for which probabilistic response-time
guarantees are imposed, whereas $s_a=0$ points out that no SLA constraint is required for that element.\\
\end{itemize}

\noindent {\bf Feasibility Requirements}\\

\noindent Feasibility requires stability in every service regime and satisfaction of the
service-level constraints for the protected set $S$. In what follows, we describe how these conditions are modeled in our formulation.
\begin{itemize}
\item[-] \textbf{Stability.}
This mandatory condition is enforced through a strict slack condition ensuring the utilization factor in each regime, $\rho_r$, is strictly under $1$,
\begin{equation}
\mu_r - \Lambda_r \ge \varepsilon,
\qquad \forall r\in\mathcal{R},
\label{eq:stability_eps}
\end{equation}
where $\varepsilon>0$ is a small numerical parameter chosen consistently with
solver tolerances. This condition guarantees that each regime operates in a
stable $\mathrm{M}/\mathrm{M}/1$ regime and avoids numerical issues associated
with boundary solutions.

\item[-] \textbf{Selective SLA enforcement.}
Let $t_a^\star$ denote the response-time threshold for demand point $a$ and let
$\alpha_a\in[0,1]$ denote it admissible violation probability. To enforce the SLA
condition \eqref{eq:sla_chance} only when $s_a=1$, we show the following result.

\begin{lemma}\label{lemma:sla}
    Let $\zeta_{ar}\geq 0$ be nonnegative auxiliary variables for $a\in \A$ and $r\in\RR$. The following set of mixed-integer exponential cone constraints,
    \begin{gather}
        \sum_{r\in\mathcal{R}} \pi_r \zeta_{ar}\le 1 + (\alpha_a-1)s_a,\label{eq:sla_switch}\\
        (\zeta_{ar}, s_a, -(\mu_r-\Lambda_r)(t_a^\star-t_a))\in \mathcal{K}_{\exp}, \label{eq:exp_link_zeta}
    \end{gather}
for all $a\in\A$ and $r\in\RR$, ensures the {\rm SLA}~\eqref{eq:sla_chance} is just required for the elements of $S := \{a \in \A: s_a=1\}$ in the system.

In the single regimen case, $|\RR|=1$, the stability~\eqref{eq:stability_eps} and {\rm SLA} enforcement \eqref{eq:sla_switch}-\eqref{eq:exp_link_zeta} can be compactly simplified to
$$
\mu_1 \geq  \left(\max\left\{\varepsilon, - \frac{\log(\alpha_a)}{t_a^\star-t_a}\right\} - \varepsilon\right) s_a + \Lambda_1 + \varepsilon, \quad \forall a \in \A.
$$
\end{lemma}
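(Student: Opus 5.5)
The argument is a case split on the binary value $s_a$, using the definition of $\mathcal{K}_{\exp}$ together with Corollary~\ref{coro:sla_explicit}. Throughout, write $d_{ar}:=(\mu_r-\Lambda_r)(t_a^\star-t_a)\ge 0$; nonnegativity follows from stability \eqref{eq:stability_eps} and $t_a^\star\ge t_a$.

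\emph{Case $s_a=1$.} The cone constraint \eqref{eq:exp_link_zeta} has second coordinate $y=1>0$. It therefore reads $\zeta_{ar}\ge e^{-d_{ar}}$. Constraint \eqref{eq:sla_switch} becomes $\sum_r\pi_r\zeta_{ar}\le\alpha_a$, and since $\pi_r\ge 0$ this implies
$$\sum_r\pi_r e^{-d_{ar}}\le\alpha_a,$$
which is exactly \eqref{eq:sla_explicit}. By Corollary~\ref{coro:sla_explicit}, this is equivalent to the SLA \eqref{eq:sla_chance}. Conversely, whenever the SLA holds, the choice $\zeta_{ar}=e^{-d_{ar}}$ satisfies both constraints. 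So the system is an exact representation of the SLA for $a\in S$.

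\emph{Case $s_a=0$.} Now $y=0$, so membership in $\mathcal{K}_{\exp}$ falls under the closure branch. This requires $-d_{ar}\le 0$, which always holds, and $\zeta_{ar}\ge 0$. Constraint \eqref{eq:sla_switch} becomes $\sum_r\pi_r\zeta_{ar}\le 1$, and $\zeta_{ar}=0$ satisfies it. Hence no restriction is placed on $\mu$ beyond stability, and the SLA is not imposed for $a\notin S$. This is the step to handle carefully: one must check that the boundary branch of the cone imposes no hidden condition on $\mu_r$. Here that hinges on $d_{ar}\ge 0$.

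\emph{Single regime.} With $|\RR|=1$ we have $\pi_1=1$. For $s_a=1$, the SLA \eqref{eq:sla_1reg} gives $\mu_1\ge\Lambda_1-\log(\alpha_a)/(t_a^\star-t_a)$, and stability gives $\mu_1\ge\Lambda_1+\varepsilon$. Together these say
$$\mu_1\ge\Lambda_1+\max\{\varepsilon,-\log(\alpha_a)/(t_a^\star-t_a)\},$$
which is what the displayed formula yields at $s_a=1$. For $s_a=0$, the formula reduces to $\mu_1\ge\Lambda_1+\varepsilon$, i.e.\ plain stability. Since stability is required for all $a$ in every case, taking the inequality over all $a\in\A$ reproduces both stability and the selective SLA.

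Some degenerate cases need a convention. When $t_a^\star=t_a$ or $\alpha_a=0$, the bound should be read as $+\infty$, meaning the SLA is infeasible unless $s_a=0$. When $\alpha_a=1$, the bound is trivial.
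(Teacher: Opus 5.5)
Your proof is correct and follows the same route as the paper: a case split on $s_a$, reading \eqref{eq:exp_link_zeta} through the $y>0$ branch of $\mathcal{K}_{\exp}$ when $s_a=1$ and through the closure branch when $s_a=0$ (using $-(\mu_r-\Lambda_r)(t_a^\star-t_a)\le 0$ from stability), then invoking Corollary~\ref{coro:sla_explicit}, and \eqref{eq:sla_1reg} for the single-regime case. Your additions make it slightly more complete than the paper's argument, which proves only the forward implication and calls the single-regime case straightforward: the converse via $\zeta_{ar}=e^{-(\mu_r-\Lambda_r)(t_a^\star-t_a)}$, the explicit witness $\zeta_{ar}=0$ when $s_a=0$, the check of the single-regime formula at both $s_a\in\{0,1\}$, and the treatment of degenerate cases.
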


\begin{proof}{Proof.}
    Suppose first that $a\notin S$, then $s_a=0$. Hence, constraint \eqref{eq:exp_link_zeta} becomes
    $$
    (\zeta_{ar},0, -(\mu_r-\Lambda_r)(t_a^\star-t_a))\in \mathcal{K}_{\exp}, \qquad \forall r\in\RR
    $$
    which is trivially verified by nonnegativity of $\zeta_{ar}$ and nonpositivity of $-(\mu_r-\Lambda_r)(t_a^\star-t_a)$, where this latter is due to the stability condition \eqref{eq:stability_eps}. Therefore, constraint \eqref{eq:sla_switch}, which reduces to
$\sum_{r\in\RR}\pi_r \zeta_{ar}\le 1$, becomes nonbinding. Thus, no condition is imposed to demand $a$. Otherwise, if $a\in S$, then $s_a=1$. Hence, constraint \eqref{eq:exp_link_zeta} becomes
$$
e^{-(\mu_r-\Lambda_r)(t_a^\star-t_a)}\le \zeta_{ar}, \qquad \forall r\in\RR
$$
and constraint \eqref{eq:sla_switch} reduces to $\sum_{r\in\RR}\pi_r \zeta_{ar}\le \alpha_a$, what together implies that
$$
\alpha_a \ge \sum_{r\in\RR}\pi_r \zeta_{ar} \ge \sum_{r\in\RR}\pi_r e^{-(\mu_r-\Lambda_r)(t_a^\star-t_a)}.
$$
Then, \eqref{eq:sla_explicit} holds. Thus, by Corollary~\ref{coro:sla_explicit}, SLA condition \eqref{eq:sla_chance} also holds for $a$.

For the case $|\RR|=1$, the result follows straighforward by the observation in \eqref{eq:sla_1reg}.
\end{proof}

\end{itemize}

Apart from stability and selective SLA enforcement, no additional feasibility
requirements are imposed in the base model. Note that in such base model the best decision is set $s_a=0$ for all demand $a\in\A$ since there is no binding with other system criteria to enforce the service-level agreement and avoiding the emptyness of the protected set $S$. Thereby, a minimum coverage requirement will be considered below, along with a cost-congestion tradeoff, and tail-risk control. Such assessment criteria will be incorporated to the base model through objective functions and additional constraints, as described next.\\

\noindent{\bf Assessment Criteria}\\

\noindent The proposed framework supports a flexible family of objectives and constraints
that capture economic efficiency, congestion robustness, service coverage, and
tail-risk control within a single optimization model.
\begin{itemize}

\item[-] \textbf{Cost and congestion robustness.}
Operating capacity is penalized through a linear cost term $c^t \mu:=\sum_{r\in\mathcal{R}} c_r \mu_r$, where
$c\in\mathbb{R}^{R}_+$ represents regime-specific capacity costs. To
discourage operation close to instability, we also incorporate a logarithmic
congestion penalty of the form $-\log(\mu_r-\Lambda_r)$, which increases rapidly
as the service slack approaches zero. Introducing auxiliary variables $\nu_r$, this penalty can be modeled via the exponential cone constraints
\begin{equation}
(\mu_r-\Lambda_r,\,1,\,\nu_r)\in\mathcal{K}_{\exp},
\qquad \forall r\in\mathcal{R}.
\label{eq:exp_log}
\end{equation}
These expressions mean $\mu_r-\Lambda_r\ge e^{\nu_r}$, so that $\log(\mu_r-\Lambda_r)\ge \nu_r$ for all regime $r\in\RR$. Thus, $-\nu_r$ provides an upper bound on the logarithmic
congestion penalty $-\log(\mu_r-\Lambda_r)$. A
representative objective function is therefore given by
\begin{equation}
\min \quad c^{t}\mu - \kappa \sum_{r\in\mathcal{R}} \nu_r,
\label{eq:obj_tradeoff}
\end{equation}
where $\kappa\ge 0$ controls the emphasis placed on congestion robustness relative
to capacity cost.

\item[-] \textbf{Coverage.}
To prevent service guarantees from being concentrated on a negligible subset of
the demand population, we impose a minimum coverage requirement of the form
\begin{equation}
\sum_{a\in\mathcal{A}} s_a \;\ge\; \lceil \beta |\mathcal{A}| \rceil,
\label{eq:coverage}
\end{equation}
where $\beta\in(0,1]$ specifies the minimum fraction of demand points for which
probabilistic service-level agreements must be enforced. The parameter $\beta$
thus provides a direct and interpretable control on the breadth of service
commitments. Larger values of $\beta$ correspond to near-uniform protection
across the population, whereas smaller values allow resources to be focused on a
core subset of demand elements where guarantees can be delivered most
cost-effectively.

The framework readily accommodates extensions of \eqref{eq:coverage} to account
for heterogeneity in demand importance or economic value. For example, letting
$w_a>0$ denote a weight associated with element $a$, a weighted coverage
constraint of the form
$$
\sum_{a\in\mathcal{A}} w_a s_a \;\ge\; \beta \sum_{a\in\mathcal{A}} w_a
$$
ensures protection of a prescribed fraction of total weighted demand. Moreover,
$\beta$ itself may be treated as a decision variable, enabling the model to
determine the maximum achievable coverage under a fixed budget or congestion
limit.

\item[-] \textbf{Conflict-aware service guarantees.}
Meanwhile the coverage constraint \eqref{eq:coverage} ensures that a minimum fraction of demand elements is protected, in practice it may not be operationally credible to enforce strict service-level guarantees for all selected elements simultaneously. In congested service systems, guarantees compete for localized and highly correlated resources, such as nearby servers, dispatch units, or priority capacity, particularly under adverse demand realizations that drive tail-risk
measures.

To prevent over-commitment of such resources, we introduce conflict constraints
that restrict simultaneous protection of incompatible demand elements. These
constraints are modeled via a conflict graph $G=(\A,\mathcal{E})$, where
each node $a\in\A$ represents a demand element and an edge
$\{a,a'\}\in\mathcal{E}$ indicates that elements $a$ and $a'$ cannot both be
assigned strict service guarantees without violating operational feasibility.
The conflict constraints take the simple linear form
\begin{equation}
s_a + s_{a'} \;\le\; 1,
\qquad \forall \{a,a'\}\in\mathcal{E}.
\label{eq:conflict}
\end{equation}
The conflict graph can be constructed using empirical information on spatial
proximity, temporal overlap, shared service resources, or correlation of response
times under congestion. 

By explicitly modeling such incompatibilities, constraint \eqref{eq:conflict}
ensures that the protected set of demand elements remains operationally credible,
preserves the integrity of the service guarantees under tail-risk control, and
prevents the model from relying on unrealistic concentration of guarantees in
highly congested regions of the system, in line with the congestion interaction
effects discussed in \cite{feldman2012conflicting}.

\item[-] \textbf{Tail-risk control.}
    Note that although \eqref{eq:coverage} is selectively enforced on a $\beta$ fraction of the users, it might also be important to control the performance experienced by the remaining $1-\beta$ proportion who may face consistently high response times.
To prevent unbounded degradation among these users, one may incorporate a complementary robust condition based on the average of the expected time of those customers who could \emph{violate} the {\rm SLA} constraints ($\A\setminus S)$. To adapt the formulation to this feature, it considers a time threshold value $\Gamma \geq 0$ which will upper bound the expected response times of the users in $\A\setminus S$, and adds the nonlinear constraint
$$
\frac{1}{\lfloor(1-\beta)|\A|\rfloor}\sum_{a\in\A}(1-s_a)\left(t_a+\sum_{r\in\RR}\frac{\pi_r}{\mu_r-\Lambda_r}\right)\leq \Gamma.
$$
Despite its nonlinearity, it can be replaced by $|\RR|$ exponential cone constraints, $3|\A||\RR|+1$ linear constraints, and $(|\A|+1)|\RR|$ continuous auxiliary variables, keeping convexity in its continuous relaxation. Nevertheless, this constraint just ensures proper robustness in case all demand in $\A\setminus S$ actually violate the {\rm SLA} constraints, i.e., $\mathbb{P}(R_a\le t^\star_a)<1-\alpha_a, \forall a\in\A\setminus S$. A careful reader will notice this forced-violation requirement would suppose a nonsense for the aim of guaranteeing the quality of the service. Because of that, we will tackle robustness in expected response times via conditional value-at-risk ({\rm CVaR}), which allows us to split the coverage of the strictly satisfied demand and the tail-risk control. This is performed through the notion of CVaR
$$
{\rm CVaR}_\gamma\left(\E[R_a]_{a\in\A}\right) : = \frac{1}{\lfloor (1-\gamma)|\A|\rfloor} \sum_{i=1}^{\lfloor (1-\gamma)|\A|\rfloor} \E[R_{(i)}]
$$
where $\E[R_{(1)}] \geq \E[R_{(2)}] \geq \cdots \geq \E[R_{(|\A|)}]$ is the (non-increasing) sorted list of expected total response times of each of the demands, and $0< 1-\gamma\leq 1$ is the proportion of the demand with the worst expected total response times ($\gamma$ could be $\beta$ or could be decoupled from the coverage condition). That means CVaR$_\gamma\circ \E$ is the average of the expected response time of the demands whose expected total response are among the $1-\gamma$ worst fraction.

\begin{lemma}\label{lemma:cvar}
    Let $\A$ be the set of demands, $0\leq \gamma <1$ be a proportion, and $\Gamma\geq 0$ be a time threshold. The condition ${\rm CVaR}_\gamma\left(\E[R_a]_{a\in\A}\right)\leq \Gamma$ can be expressed with the following set of exponential cone constraints
    \begin{align}
  r_a \geq t_a + \sum_{r\in\mathcal{R}} \pi_r\tau_r, && \forall a\in\mathcal{A},
\label{eq:r_linear}\\
U_a \ge r_a-\eta, && \forall a\in\A\label{eq:var}\\
\eta + \frac{1}{\lfloor (1-\gamma)|\A|\rfloor}\sum_{a\in\mathcal{A}} U_a \le \Gamma, &&
\label{eq:cvar_constraint}\\
\nu_r + \ell_r\ge 0, &&\forall r\in\mathcal{R}, \label{eq:log_sum}\\
(\tau_r,\,1,\,\ell_r)\in \mathcal{K}_{\exp}, &&\forall r\in\mathcal{R},
\label{eq:log_v}
\end{align}
where $(r,\tau, \eta,\ell, U)$ are nonnegative auxiliary variables.
\end{lemma}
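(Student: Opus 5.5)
The plan is to combine two standard facts: the Rockafellar--Uryasev variational representation of CVaR over a finite, equally weighted sample, and the exponential-cone epigraph of $1/(\mu_r-\Lambda_r)$ built from the congestion variables $\nu_r$ of \eqref{eq:exp_log}. Write $k:=\lfloor(1-\gamma)|\A|\rfloor\ge 1$ and $x_a:=\E[R_a]=t_a+\sum_{r}\pi_r/(\mu_r-\Lambda_r)$, which Theorem~\ref{th:hyperexponential} gives in closed form. The proof then splits into two equivalences, and I will verify each in both directions.

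\emph{Step 1 (CVaR as a minimum).} I first show that for any real vector $(x_a)_{a\in\A}$ with sorted values $x_{(1)}\ge\cdots\ge x_{(|\A|)}$,
$$
\frac1k\sum_{i=1}^k x_{(i)}=\min_{\eta\in\mathbb{R}}\Big\{\eta+\frac1k\sum_{a\in\A}(x_a-\eta)^+\Big\}.
$$
The function of $\eta$ inside the braces is convex and piecewise linear. Its slope is $1-|\{a:x_a>\eta\}|/k$, so a minimizer is $\eta^\ast=x_{(k)}$. Substituting $\eta^\ast$ makes the terms telescope to the average of the $k$ largest values.

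Consequently, ${\rm CVaR}_\gamma\le\Gamma$ holds if and only if there exist $\eta$ and $U_a\ge\max\{0,x_a-\eta\}$ with $\eta+\frac1k\sum_a U_a\le\Gamma$. These are exactly \eqref{eq:var}, \eqref{eq:cvar_constraint} and $U\ge0$, with $x_a$ in place of $r_a$. Requiring $\eta\ge0$ loses nothing, because $\eta^\ast=x_{(k)}\ge t_{(\cdot)}\ge 0$.

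Monotonicity also lets me replace $x_a$ by any upper bound $r_a\ge x_a$. In one direction, feasibility with $r_a$ implies feasibility with $x_a$, since the left-hand side of the CVaR condition is nondecreasing in each $x_a$. In the other direction, $r_a=x_a$ is always an admissible choice.

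\emph{Step 2 (epigraph of the expected sojourn time).} The constraint \eqref{eq:exp_log} gives $\nu_r\le\log(\mu_r-\Lambda_r)$, and \eqref{eq:log_v} gives $\tau_r\ge e^{\ell_r}$. Together with \eqref{eq:log_sum}, which reads $\ell_r\ge-\nu_r$, these yield
$$
\tau_r\ \ge\ e^{-\nu_r}\ \ge\ \frac{1}{\mu_r-\Lambda_r}.
$$
Combined with \eqref{eq:r_linear}, this gives $r_a\ge x_a$ for every $a\in\A$. Hence any feasible point of the system satisfies ${\rm CVaR}_\gamma(\E[R_a])\le\Gamma$ by Step 1.

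For the converse, I take $\nu_r=\log(\mu_r-\Lambda_r)$, $\ell_r=-\nu_r$, $\tau_r=1/(\mu_r-\Lambda_r)$, $r_a=x_a$, $\eta=x_{(k)}$ and $U_a=(x_a-\eta)^+$. I then check every constraint, which attains the minimum in Step 1.

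\emph{Main obstacle.} The delicate point is the stated nonnegativity of $\ell_r$. Setting $\ell_r=-\log(\mu_r-\Lambda_r)$ is negative whenever $\mu_r-\Lambda_r>1$. Forcing $\ell_r\ge 0$ would make $\tau_r\ge1$, which is only an upper bound on $1/(\mu_r-\Lambda_r)$ and can therefore cut off feasible designs. My first attempt will be to argue that $\ell_r$ should be treated as a free variable for exactness, with the sign restriction applying only to $r,\tau,\eta,U$. If instead the nonnegativity is kept, I would check that the time units are rescaled so that $\mu_r-\Lambda_r\le1$ holds. Failing both, I would make explicit that the representation is exact only for free $\ell$, and is a conservative (sufficient) condition otherwise.
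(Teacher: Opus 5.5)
Your proof is correct and follows the paper's route: the log-sum argument $\tau_r\ge e^{\ell_r}\ge e^{-\nu_r}\ge 1/(\mu_r-\Lambda_r)$ gives $r_a\ge\E[R_a]$, and the $k$-sum/CVaR linearization then bounds the average of the $k$ largest values. The paper proves only the direction ``constraints $\Rightarrow$ ${\rm CVaR}_\gamma\le\Gamma$''. It also writes the step $\eta+\frac1k\sum_{a:\,r_a\ge\eta}(r_a-\eta)=\frac1k\sum_{i\le k}r_{(i)}$ as an equality, which for arbitrary $\eta$ holds only as the inequality $\ge$; your minimization over $\eta$ makes this precise. Your converse construction and your caveat are both right: with $\ell_r\ge 0$ imposed, $\tau_r\ge 1$, so the system is an exact representation only if $\ell$ is free or $\mu_r-\Lambda_r\le 1$ for all $r$. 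Otherwise it is merely a sufficient (conservative) condition, which is exactly what the paper's proof establishes.
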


\begin{proof}{Proof.}
First, we show constraints \eqref{eq:log_sum} and \eqref{eq:log_v} ensure $(\mu_r-\Lambda_r)\tau_r\geq 1$. On the one hand, by \eqref{eq:log_v} we have $\tau_r\ge e^{\ell_r}$ therefore $\log(\tau_r)\ge \ell_r$ and by definition of the auxiliary variable $\nu_r$ \eqref{eq:exp_log} $\log(\mu_r-\Lambda_r)\ge \nu_r$. It implies using \eqref{eq:log_sum} that $\log(\mu_r-\Lambda_r)+\log(\tau_r)\ge 0$, hence $(\mu_r-\Lambda_r)\tau_r\geq 1$ holds.

On the other hand, we have that,
\begin{equation}\label{eq:auxiliary_expectations}
r_a \geq t_a + \sum_{r\in\mathcal{R}} \pi_r\tau_r \geq t_a + \sum_{r\in\mathcal{R}} \frac{\pi_r}{\mu_r-\Lambda_r}=\E[R_a],
\end{equation}
for all $a\in\A$, where the first inequality is \eqref{eq:r_linear}, the second one is due to $(\mu_r-\Lambda_r)\tau_r\geq 1$, and the last equality comes from Theorem\ref{th:hyperexponential}.

Finally, constraints \eqref{eq:var} and \eqref{eq:cvar_constraint} bound the largest $1-\gamma$ proportion of $r_a$ variables \cite[see, e.g.,][for the formulation of the $k$-sum]{Ogryczak2003}. Thus, we obtain
\begin{align*}
    & \Gamma \geq  \eta + \frac{1}{\lfloor (1-\gamma)|\A|\rfloor}\sum_{a\in\mathcal{A}} U_a, && [\text{by }~\eqref{eq:cvar_constraint}]\\
    & \geq  \eta + \frac{1}{\lfloor (1-\gamma)|\A|\rfloor}\sum_{\substack{a\in\mathcal{A}:\\ r_a\ge \eta}}(r_a-\eta), && [\text{by } U\ge 0, \eqref{eq:var}] \\
    & = \frac{1}{\lfloor (1-\gamma)|\A|\rfloor} \sum_{i=1}^{\lfloor (1-\gamma)|\A|\rfloor} r_{(i)} && \\
    & \geq \frac{1}{\lfloor (1-\gamma)|\A|\rfloor} \sum_{i=1}^{\lfloor (1-\gamma)|\A|\rfloor} \E[R_{(i)}], && [\text{by }~\eqref{eq:auxiliary_expectations}]\\
    & =: {\rm CVaR}_\gamma\left(\E[R_a]_{a\in\A}\right) &&
\end{align*}
as it was claimed.
\end{proof}

\end{itemize}

The parameters $(\kappa, \beta, \gamma, \Gamma)$ jointly control congestion robustness, service coverage, and tail-risk aversion, allowing the designer to explore a broad
range of reliability and efficiency trade-offs within a single unified
optimization framework.\\

\noindent{\bf Mathematical Optimization Model}\\

\noindent Summarizing the modeling and design developments introduced above, the flexible
service system design problem can be expressed as a mixed-integer exponential
cone optimization model that integrates stability~\eqref{eq:stability_eps}, selective service-level
guarantees~\eqref{eq:sla_switch}-\eqref{eq:exp_link_zeta}, cost-congestion robustness~\eqref{eq:exp_log}-\eqref{eq:obj_tradeoff}, minimum coverage requirement~\eqref{eq:coverage}, conflict-aware service~\eqref{eq:conflict}, and tail-risk control~\eqref{eq:r_linear}-\eqref{eq:log_v} 
within a single formulation. The decision variables include regime-dependent
service capacities, SLA activation indicators, and
auxiliary variables required to represent queueing performance and risk measures
exactly through linear and exponential cone constraints. The full resulting model is denoted as  \ref{SSD(C/TR)-ECP} and can be explicitly found in Appendix~\ref{ap:fullmodel}.

The parameters $((\alpha_a)_{a\in \A},\kappa,\beta,\gamma,\Gamma)$ provide interpretable levers that
shape the resulting system design and allow the decision-maker to explore
trade-offs between coverage, reliability, congestion robustness, and tail-risk
control. The coverage parameter $\beta$ controls the minimum fraction of demand
points for which service-level agreement is enforced. Larger values of $\beta$
promote more uniform protection across the demand population, typically requiring
higher service capacities and leading to more conservative designs, while smaller
values allow the model to concentrate resources on a core subset of demand points
for which guarantees can be delivered most cost-effectively.

The SLA tolerance parameters $\alpha_a$ governs the strictness of the probabilistic
response-time guarantees. Smaller values of $\alpha_a$ impose tighter reliability
requirements by limiting the allowable probability of SLA violation for demand $a$, which in
turn increases the required service slack and capacity investment. Larger values
of $\alpha_a$ relax these guarantees, allowing higher violation probabilities and
yielding less conservative designs.

Tail-risk considerations are controlled through the parameters $\gamma$ and $\Gamma$. The first one sets the proportion of the out-of-tail population, considering the remaining $1-\gamma$ proportion as the least-favored users to be cared for (typically, one could link the coverage with the tail control taking $\gamma=\beta$). The second one bounds the conditional value-at-risk of the expected response times across the
demand population. Smaller values of $\Gamma$ enforce stricter control of extreme
delays by limiting the average response time experienced by the worst-performing
subset of demand points, whereas larger values permit greater variability in tail
performance and prioritize cost efficiency.

The congestion-robustness parameter $\kappa$ balances operating cost against
proximity to instability. Increasing $\kappa$ penalizes designs that operate with
small service slack, encouraging capacity buffers and smoother performance under
demand fluctuations, while $\kappa=0$ yields cost-minimizing solutions that may
operate closer to critical utilization levels.

Together with the stability margin $\varepsilon$, which enforces a strict buffer
away from instability and improves numerical robustness, these parameters define
a flexible design space. By varying $((\alpha_a)_{a\in \A},\kappa,\beta,\gamma,\Gamma)$, the framework
can generate a spectrum of solutions ranging from cost-efficient designs with
limited coverage and relaxed reliability requirements to highly robust systems
with broad service guarantees and controlled tail performance. This parametric
structure supports sensitivity analysis and policy evaluation, enabling decision
makers to align operational performance with strategic reliability objectives.

\begin{theorem}\label{th:np-hardness}
    If $\mathcal{E}\neq \emptyset$, the mixed-integer exponential cone program \ref{SSD(C/TR)-ECP} is {\rm NP}-hard, even for a single regime, $|\RR|=1$.
\end{theorem}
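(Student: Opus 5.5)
Our plan is to reduce from the decision version of \textsc{Maximum Independent Set}, which is NP-complete: given a graph $G=(V,E)$ and an integer $k$, decide whether $G$ contains an independent set of size at least $k$. From $(G,k)$ we build an instance of \ref{SSD(C/TR)-ECP} with a single regime, $|\RR|=1$. The demand points are $\A:=V$, the conflict graph is $G$ itself, so $\mathcal{E}:=E\neq\emptyset$, and the coverage parameter is $\beta:=k/|V|$, which gives $\lceil\beta|\A|\rceil=k$. We take all data homogeneous: $t_a=0$, $t_a^\star=1$ and a common $\alpha_a=\alpha\in(0,1)$ for every $a\in\A$, together with an arbitrary $\Lambda_1>0$, $c_1=1$ and $\kappa=0$. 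We choose the tail-risk threshold $\Gamma$ so large that constraints \eqref{eq:r_linear}--\eqref{eq:log_v} can always be satisfied. For instance, $\Gamma:=1/\varepsilon$ works, because $\E[R_a]\le 1/\varepsilon$ under \eqref{eq:stability_eps}. The construction clearly takes polynomial time.

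The key step is to show that the instance is feasible if and only if $G$ has an independent set of size at least $k$. By Lemma~\ref{lemma:sla} in the single-regime case, the stability and selective SLA constraints reduce to $\mu_1\ge \Lambda_1+\max\{\varepsilon,-\log\alpha\}$ whenever some $s_a=1$, and to $\mu_1\ge\Lambda_1+\varepsilon$ otherwise. Both lower bounds are finite, so for \emph{any} binary vector $s$ some $\mu_1$ makes the continuous part feasible. We complete it with $\tau_1=1/(\mu_1-\Lambda_1)$, $\ell_1=-\nu_1=-\log(\mu_1-\Lambda_1)$, $\eta=0$, $r_a=U_a=\tau_1$, and $\zeta_{a1}=e^{-(\mu_1-\Lambda_1)}$ if $s_a=1$ and $\zeta_{a1}=0$ otherwise. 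Hence feasibility of $s$ is governed only by the combinatorial constraints \eqref{eq:coverage} and \eqref{eq:conflict}. Constraint \eqref{eq:conflict} says $S=\{a:s_a=1\}$ is independent in $G$, and \eqref{eq:coverage} says $|S|\ge k$. So the instance is feasible exactly when the desired independent set exists, and deciding feasibility (a fortiori optimizing) is NP-hard.

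The main obstacle is bookkeeping rather than conceptual. We must check that every auxiliary variable in the full model of Appendix~\ref{ap:fullmodel}, in particular the nonnegativity of $(r,\tau,\eta,\ell,U)$, can be set consistently. If $\ell_1\ge0$ is required, we pick $\mu_1-\Lambda_1\le 1$; this is possible if $-\log\alpha\le 1$, so we choose, say, $\alpha=e^{-1/2}$ and $\varepsilon\le 1/2$. We must also handle $\lfloor(1-\gamma)|\A|\rfloor\ge1$ by taking, e.g., $\gamma=0$.

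If one prefers an optimization-flavoured hardness argument, we can instead let $\beta$ vary: maximizing coverage subject to the model amounts to maximum independent set. We do this by adding to the objective a term $-M\sum_a s_a$, or by noting that optimizing over all values of $k$ solves MIS. This variant is unnecessary, however, since NP-hardness of the feasibility version already suffices.
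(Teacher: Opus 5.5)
Your proof is correct and follows the same route as the paper. Both use a single-regime instance in which the continuous block (stability, SLA cones, congestion and CVaR constraints) is feasible for every binary $s$: the paper gets this from a large $\varepsilon$, $\kappa=c_1=0$ and a large $\Gamma$, and you get it from homogeneous data with $\Gamma=1/\varepsilon$. Feasibility is then exactly the existence of an independent set of size $\lceil\beta|\A|\rceil$ in the conflict graph.

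One bookkeeping fix is needed. The full model also requires $\nu_1\ge 0$, and through $(\mu_1-\Lambda_1,1,\nu_1)\in\mathcal{K}_{\exp}$ this forces $\mu_1-\Lambda_1\ge 1$. Combined with your $\ell_1\ge 0$ condition, you must take exactly $\mu_1=\Lambda_1+1$, which gives $\nu_1=\ell_1=0$, $\tau_1=1$ and $\zeta_{a1}=e^{-1}\le\alpha$. Your choices $\alpha=e^{-1/2}$ and $\varepsilon\le 1/2$ already allow this.
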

 
\begin{proof}{Proof.}
    Let us consider the simplified version of the problem where $\RR=\{1\}$ with
$\pi_1=1$. One can set $\varepsilon=-\min_{a\in\A}\frac{\log(\alpha_a)}{t^\star_a-t_a}>0$, then take $\mu_1=\Lambda_1+\varepsilon$ and enforce $\zeta_{a1}=e^{-\varepsilon(t^\star_a-t_a)}$. It is easy to check this choice is feasible for the problem. Finally, choose $\kappa=c_1=0$ and pick $(\tau_1,\ell_1,\nu_1,\eta,r,U)$ so that the remaining constraints are trivially satisfied (e.g., take $\Gamma$ sufficiently large and $\tau_1=0$).

Under these parameter choices, the only nontrivial constraints on $s$ are exactly
$$
\sum_{a\in\mathcal A}s_a\ge \left\lceil\beta |\A|\right\rceil 
\qquad\text{and}\qquad
s_a+s_{a'}\le 1,\ \ \forall \{a,a'\}\in\mathcal{E},
$$
so a feasible solution exists if and only if $G=(\A,\mathcal{E})$ contains an independent set of
size at least $\left\lceil\beta |\A|\right\rceil$. Hence, the problem reduces to the independent set problem, whose feasibility is well-known to be NP-complete~\citep[see, e.g.,][]{garey2002computers}. Thus, the
optimization problem is NP-hard.
\end{proof}

\subsection{Solution Approach}\label{sec:benders}

The problem described above admits a natural decomposition structure in which
the binary variables $s$ determine the subset of demand elements for which
service-level guarantees are enforced, while all remaining variables enter
through a continuous convex optimization problem.
Specifically, the problem can be equivalently written as
\begin{align*}
\min \ & \theta\\
\text{s.t. } 
& \sum_{a\in\mathcal{A}} s_a \ge \lceil \beta |\mathcal{A}| \rceil,\\
& s_a + s_{a'} \le 1, \qquad \forall \{a,a'\}\in\mathcal{E};\\
& \theta \ge \Theta(s),\\
& s_a \in \{0,1\}, \qquad \forall a\in\mathcal{A};
\end{align*}
where $\Theta(s)$ denotes the optimal value of a (continuous) exponential cone
program obtained by fixing $s$ (see Appendix~\ref{app:benders}).

For any fixed $s$, the subproblem defining $\Theta(s)$ is a convex optimization
problem involving linear constraints and exponential cones. Under standard
regularity conditions ensuring feasibility and strong duality, the value
function $\Theta(\cdot)$ is convex over the domain $[0,1]^{\mathcal{A}}$.
This structure enables the use of Benders decomposition, in which the value function $\Theta(\cdot)$ 
is approximated from below by a sequence of supporting hyperplanes derived from
dual information of the subproblem.

The resulting algorithm alternates between a mixed-integer linear  master problem in the
$s$-variables and a continuous exponential cone subproblem.
At each iteration, the master problem proposes a candidate protection pattern,
which is evaluated by solving the subproblem. If the subproblem is feasible,
dual multipliers yield a valid Benders optimality cut that is added to the master.
If the subproblem is infeasible, a feasibility cut excluding the current
solution is generated. The procedure terminates when the optimality gap falls
below a prescribed tolerance.

The detailed formulation of the subproblem and the derivation of Benders cuts are provided in Appendix~\ref{app:benders}.



Observe that the master problem has a finite number of feasible binary vectors
$s\in\{0,1\}^{\mathcal A}$.
In the described decomposition procedure, each iteration adds a cut that excludes the current solution from being optimal
unless it is globally optimal.
Hence, the procedure cannot revisit the same solution indefinitely and must
terminate after finitely many iterations.

\subsection{A Polynomial Time Procedure for the Conflict-Free Case}

As shown in Theorem~\ref{th:np-hardness}, problem \ref{SSD(C/TR)-ECP} is NP-hard whenever
the conflict graph satisfies $\mathcal E \neq \emptyset$.
In this section, we explore a structurally important special case in which no
conflict constraints are present, that is, $\mathcal E = \emptyset$.
Although simpler, this setting remains relevant in practice and provides
valuable insight into the structure of optimal service guarantee policies.

In many service systems, including emergency medical services, explicit conflicts
between demand elements arise primarily due to localized congestion effects.
When demand is sufficiently dispersed in time or space, or when the planning
horizon is aggregated at a high level, it is reasonable to ignore such pairwise
incompatibilities and focus instead on system wide capacity and tail-risk
tradeoffs.
The conflict-free formulation thus constitutes a useful benchmark and enables
analytical characterization of optimal protection strategies. So, throughout this section, we assume $\mathcal E = \emptyset$.

We begin by establishing a monotonicity property of the value function
$\Theta(\cdot)$.

\begin{proposition}\label{prop:monotonicity}
Let $s,s' \in \{0,1\}^{\mathcal A}$ be feasible vectors such that
$s \le s'$ componentwise. Then $\Theta(s) \le \Theta(s')$.
\end{proposition}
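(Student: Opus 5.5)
The plan is to argue by feasible-set inclusion for the subproblem that defines $\Theta(\cdot)$. Fix $s\le s'$ in $\{0,1\}^{\mathcal A}$. The subproblem $\Theta(s)$ is obtained from \ref{SSD(C/TR)-ECP} by fixing the binary vector; its remaining variables are $(\mu,\zeta,\nu,\tau,\ell,r,U,\eta)$. Its objective $c^t\mu-\kappa\sum_r\nu_r$ does not depend on $s$. It therefore suffices to show that every feasible point of the subproblem at $s'$ is also feasible at $s$. Then $\Theta(s)$ is a minimum over a larger set, so $\Theta(s)\le\Theta(s')$. If the subproblem at $s'$ is infeasible, the convention $\Theta(s')=+\infty$ makes the claim trivial.

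To verify the inclusion, I will go through the constraints in which $s$ appears. Stability \eqref{eq:stability_eps}, the congestion cones \eqref{eq:exp_log}, and the CVaR block \eqref{eq:r_linear}--\eqref{eq:log_v} do not involve $s$, so they carry over unchanged. Coverage \eqref{eq:coverage} and the (here empty) conflict constraints only restrict the master problem, and $s$ is assumed feasible there. The only constraints coupling $s$ to continuous variables are \eqref{eq:sla_switch} and \eqref{eq:exp_link_zeta}, and these are separate for each $a$.

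For each $a$ I will split into cases. If $s_a=s'_a$, nothing changes and the same $\zeta_{ar}$ works. The only other case is $s_a=0$ and $s'_a=1$. The argument in Lemma~\ref{lemma:sla} shows that with $s_a=0$ the cone constraint $(\zeta_{ar},0,-(\mu_r-\Lambda_r)(t^\star_a-t_a))\in\mathcal K_{\exp}$ holds for any $\zeta_{ar}\ge0$, because stability makes the third component nonpositive. The right-hand side of \eqref{eq:sla_switch} becomes $1$, and $\sum_r\pi_r\zeta_{ar}\le\alpha_a\le1$ already holds at $s'$. So the same $\zeta$ stays feasible. Alternatively, I could reset $\zeta_{ar}=0$, which also satisfies both constraints.

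The main obstacle is being careful about what the subproblem in Appendix~\ref{app:benders} actually contains. If any further $s$-dependent term entered the subproblem, for example through the relaxed $[0,1]$ domain or a constraint that is not monotone in $s$, the inclusion would need a separate check. I expect no such term, since the CVaR block is decoupled from $s$. A second point to check is that the stability margin $\varepsilon>0$ is what makes the third cone component nonpositive. This is guaranteed inside the subproblem by \eqref{eq:stability_eps}.
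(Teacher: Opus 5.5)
Your proposal is correct and follows the same route as the paper. The objective does not depend on $s$, and $1+(\alpha_a-1)s_a$ is nonincreasing in $s_a$, so the feasible region at $s'$ sits inside the one at $s$ and hence $\Theta(s)\le\Theta(s')$. The only difference is that the paper argues directly on the explicit inequality $\sum_{r}\pi_r e^{-(\mu_r-\Lambda_r)\Delta_a}\le 1+(\alpha_a-1)s_a$, whereas you check the inclusion on the conic constraints themselves, using stability to keep $(\zeta_{ar},0,u_{ar})\in\mathcal K_{\exp}$ when $s_a$ drops to $0$; this is slightly more careful.
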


\begin{proof}{Proof.}
Assuming that $\mathcal{E}=\emptyset$, the only constraints in the subproblem defining $\Theta(\cdot)$ that depend on $s$
are the coverage constraint
$$
\sum_{a \in \mathcal A} s_a \ge \left\lceil \beta |\A|\right\rceil,
$$
and the service-level agreement constraints
$$
\sum_{r \in \mathcal R} \pi_r e^{-(\mu_r-\Lambda_r)\Delta_a}
\le 1 + (\alpha_a - 1)s_a,
\qquad \forall a \in \mathcal A,
$$
where $\Delta_a := t_a^\star - t_a \ge 0$.
Since $\alpha_a < 1$, replacing $s$ by $s'$ tightens these constraints
componentwise. Therefore, the feasible region of the subproblem under $s'$ is
contained in that under $s$, and the optimal value cannot decrease.
\end{proof}

An immediate consequence of this monotonicity is that the coverage constraint can
be enforced at equality.

\begin{corollary}
There exists an optimal solution satisfying
$$
\sum_{a \in \mathcal A} s_a = \left\lceil \beta |\A|\right\rceil.
$$
\end{corollary}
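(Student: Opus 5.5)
The plan is a downward-reduction argument built on Proposition~\ref{prop:monotonicity}. Let $k:=\lceil \beta|\A|\rceil$, and take any optimal solution of \ref{SSD(C/TR)-ECP} with protection vector $s^\ast$. Such a solution exists whenever the problem is feasible, because there are finitely many binary vectors and each subproblem value $\Theta(s)$ is attained or bounded under the regularity assumptions of Section~\ref{sec:benders}. Coverage gives $\sum_a s^\ast_a\ge k$. If equality already holds, there is nothing to prove.

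Otherwise, let $S^\ast=\{a: s^\ast_a=1\}$, which has $|S^\ast|>k$. Choose an arbitrary subset $S'\subseteq S^\ast$ with $|S'|=k$, and let $s'$ be its indicator vector, so that $s'\le s^\ast$ componentwise. The next step is to check that $s'$ is feasible for the master problem. Coverage holds with equality by construction. Since we are in the case $\mathcal E=\emptyset$, there are no conflict constraints~\eqref{eq:conflict} to verify.

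We then invoke Proposition~\ref{prop:monotonicity} with $s'\le s^\ast$ to obtain $\Theta(s')\le\Theta(s^\ast)$. The underlying reason is that the SLA switches~\eqref{eq:sla_switch}--\eqref{eq:exp_link_zeta} are relaxed for $a\in S^\ast\setminus S'$, while all other constraints are unchanged. In particular, any continuous part $(\mu,\zeta,\nu,\tau,\ell,r,U,\eta)$ that is feasible with $s^\ast$ remains feasible with $s'$. For the exponential-cone link with $s'_a=0$, this follows from the nonnegativity argument in the proof of Lemma~\ref{lemma:sla}, taking e.g.\ $\zeta_{ar}=0$ so that~\eqref{eq:sla_switch} holds as $0\le1$. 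Hence the pair $(s',\Theta(s'))$ attains an objective value no larger than the optimum, so it is optimal and satisfies coverage with equality.

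The only delicate point is that Proposition~\ref{prop:monotonicity} is stated for vectors that are ``feasible''. We therefore must make sure that $s'$ is feasible in the master sense, which we checked above. We also need that feasibility of the subproblem is inherited downward: if $\Theta(s^\ast)<\infty$, then $\Theta(s')<\infty$. This follows from the containment of feasible regions established in that proof, and we would state it explicitly so that the corollary does not rely on attainment conventions for infeasible subproblems.
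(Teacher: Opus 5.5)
Your proposal is correct and is the same argument the paper compresses into ``a direct consequence of Proposition~\ref{prop:monotonicity}'': take an optimal solution, shrink its protected set to any $\lceil\beta|\A|\rceil$ of its elements, and note that the continuous part stays feasible with the same objective value, so the reduced solution is also optimal. The one loose point is your side claim that an optimal solution exists whenever the problem is feasible, which does not hold in general (for instance, if some $c_r=0$ and $\kappa>0$ the objective is unbounded below); since the corollary tacitly presupposes an optimal solution, nothing in your argument actually depends on that claim.
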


\begin{proof}{Proof.}
This is a direct consequence of Proposition~\ref{prop:monotonicity}.
\end{proof}

In what follows, we additionally assume that $\alpha_a=\alpha$ for all
$a\in\mathcal A$.
That means all demand elements are subject to a common service-level confidence.

This assumption is standard in many practical service systems, where
service-level agreement is defined by uniform policy targets rather than
individualized confidence requirements.
In particular, in emergency service systems and public-sector operations,
response-time guarantees are typically specified by system-wide confidence
levels reflecting regulatory standards, contractual obligations, or equity
considerations.
Under this uniform-confidence setting, the conflict-free formulation admits a
polynomial-time solution and allows for a clear structural characterization of
optimal protection policies.

\begin{theorem}
When $\mathcal E=\emptyset$ and $\alpha_a=\alpha$ for all $a\in\mathcal A$,
problem \ref{SSD(C/TR)-ECP} is solvable in polynomial time.
\end{theorem}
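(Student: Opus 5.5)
The plan is to show that, under $\mathcal E=\emptyset$ and $\alpha_a=\alpha$, the combinatorial choice of $s$ can be made in closed form by sorting. After that, only a single continuous exponential cone program remains to be solved.

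First I would record which parts of \ref{SSD(C/TR)-ECP} depend on $s$. The objective \eqref{eq:obj_tradeoff} does not involve $s$. Neither do the stability constraints \eqref{eq:stability_eps}, the congestion cones \eqref{eq:exp_log}, or the CVaR block \eqref{eq:r_linear}--\eqref{eq:log_v}: the CVaR is taken over all of $\A$, and $\E[R_a]$ depends on $a$ only through $t_a$. So $s$ enters only through the coverage constraint \eqref{eq:coverage} and the SLA switch \eqref{eq:sla_switch}--\eqref{eq:exp_link_zeta}. By Lemma~\ref{lemma:sla} and Corollary~\ref{coro:sla_explicit}, the switch is equivalent to requiring $g(\mu,\Delta_a):=\sum_{r}\pi_r e^{-(\mu_r-\Lambda_r)\Delta_a}\le \alpha$ for every $a$ with $s_a=1$, where $\Delta_a=t_a^\star-t_a$. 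Under stability, each term of $g$ is nonincreasing in $\Delta$, so $g(\mu,\cdot)$ is nonincreasing. With a common $\alpha$, the conjunction over $a\in S$ therefore collapses to a single constraint $g\bigl(\mu,\min_{a\in S}\Delta_a\bigr)\le\alpha$. This is the step where uniformity of $\alpha$ is essential: with heterogeneous $\alpha_a$ the constraints no longer nest.

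Next I would combine this with the preceding Corollary, which lets us take $\sum_a s_a=k:=\lceil\beta|\A|\rceil$. Write $F(\delta)$ for the continuous feasible region under the single constraint $g(\mu,\delta)\le\alpha$. Then $F(\delta)\subseteq F(\delta')$ whenever $\delta\le\delta'$, so $\Theta(s)$ is a nonincreasing function of $\min_{a\in S}\Delta_a$ alone. Among all $S$ with $|S|=k$, this minimum is maximized by taking $S^\ast$ to be the $k$ demand points with the largest $\Delta_a$, ties broken arbitrarily. Hence $s^\ast=\mathbf 1_{S^\ast}$ is optimal, and it is feasible whenever any $s$ is. This step costs $O(|\A|\log|\A|)$ for the sort.

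Finally, with $s=s^\ast$ fixed, the problem is the continuous exponential cone program defining $\Theta(s^\ast)$. Its size is polynomial: $O(|\A||\RR|)$ variables and cones. I would invoke standard interior-point complexity for programs over $\mathcal K_{\exp}$, which have self-concordant barriers of bounded parameter, to solve it to any accuracy $\epsilon$ in time polynomial in the input size and $\log(1/\epsilon)$. Infeasibility, for instance when $\Delta_{(k)}=0$ and $\alpha<1$, is detected within the same procedure.

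I expect the main obstacle to be making "polynomial time" precise for this last step. Exact optima of exponential cone programs may be irrational, so the claim must be stated in the usual $\epsilon$-approximate sense, under a Slater-type condition. One route is to point out that the interior of the stability constraints $\mu_r-\Lambda_r>\varepsilon$ with $\mu$ large gives strict feasibility, at least when $\Gamma$ is not tight. If that is delicate, I would fall back on the ellipsoid method with a polynomial-time separation oracle, since every constraint is an explicit convex function.
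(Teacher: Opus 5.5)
Your proposal is correct and follows the paper's own argument. Sort the demand points by $\Delta_a=t_a^\star-t_a$, and note that $\sum_{r\in\RR}\pi_r e^{-(\mu_r-\Lambda_r)\Delta}$ is non-increasing in $\Delta$ under stability, so with a common $\alpha$ the SLA constraints nest. Then protect the $\lceil\beta|\A|\rceil$ points with largest $\Delta_a$ and solve the resulting convex exponential cone program by interior-point methods. Your check that $s$ enters only through coverage and the SLA switch, and your remark that polynomial time must be read in the $\epsilon$-approximate sense for exponential cone programs, make explicit points the paper's proof leaves implicit.
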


\begin{proof}{Proof.}
Let $\Delta_a:=t_a^\star-t_a$ for all $a\in\mathcal A$, and let $\sigma$ be a
permutation of $\mathcal A$ such that
$$
\Delta_{\sigma(1)} \ge \Delta_{\sigma(2)} \ge \cdots \ge \Delta_{\sigma(|\mathcal A|)}.
$$
The permutation $\sigma$ can be computed in
$\mathcal O(|\mathcal A|\log|\mathcal A|)$ time.

For any feasible service rate vector $\mu$ with $\mu_r>\Lambda_r$ for all
$r\in\mathcal R$, the function
$$
f(\Delta):=\sum_{r\in\mathcal R}\pi_r e^{-(\mu_r-\Lambda_r)\Delta}
$$
is non-increasing in $\Delta$.
Hence, if the service-level agreement constraint is satisfied for some index $a$,
it is also satisfied for any index $a'$ with $\Delta_{a'}\ge\Delta_a$.

It follows that, among all selections of
$\lceil\beta|\mathcal A|\rceil$ protected elements, the least restrictive choice
is obtained by selecting the indices with the largest values of $\Delta_a$.
Therefore, an optimal solution is obtained by fixing
$$
s^\star_a=
\begin{cases}
1, & \text{if } \sigma(a)\le \lceil\beta|\mathcal A|\rceil,\\
0, & \text{otherwise},
\end{cases}
$$
and solving the resulting continuous exponential cone program with $s=s^\star$.
Since this subproblem is convex, it can be solved in polynomial time by interior
point methods.
\end{proof}

\begin{algorithm}[t]
\caption{Polynomial time solution for the conflict-free case and uniform confidence levels.}
\label{alg:poly_no_conflicts}
\begin{algorithmic}[1]
\Input Set $\mathcal A$, integer $k=\lceil \beta|\A|\rceil$, slacks
$\Delta_a=t_a^\star-t_a\ge 0$ for all $a\in\mathcal A$, and the exponential cone
subproblem defining $\Theta(s)$.

\State Compute $\Delta_a=t_a^\star-t_a$ for all $a\in\mathcal A$.
\State Sort $\mathcal A$ in non-increasing order of $\Delta_a$ and let $\sigma$ be
the resulting permutation.
\State Define $s^\star\in\{0,1\}^{\mathcal A}$ by
$$
s^\star_{\sigma(i)}=
\begin{cases}
1, & i=1,\ldots,k,\\
0, & i=k+1,\ldots,|\mathcal A|.
\end{cases}
$$
\State Solve the exponential cone program defining $\Theta(s^\star)$
to obtain $\mu^\star$.
\State \Return $(s^\star,\mu^\star)$.
\Output An optimal solution $(s^\star,\mu^\star)$ to
\ref{SSD(C/TR)-ECP}.
\end{algorithmic}
\end{algorithm}

Note that the ordering argument above relies critically on the assumption of a uniform
confidence level.
When the parameters $\alpha_a$ are heterogeneous, the restrictiveness of a
service-level agreement depends jointly on the temporal slack $\Delta_a$ and the
tolerance parameter $\alpha_a$.
While larger values of $\Delta_a$ make a constraint easier to satisfy, smaller
values of $\alpha_a$ impose stricter probabilistic guarantees.
As a result, neither parameter alone induces a total ordering, and only a
partial order based on simultaneous dominance in both $\Delta_a$ and $\alpha_a$
can be established.
Consequently, in the heterogeneous confidence case there is no universal scalar
sorting rule that determines the optimal protected set, and the polynomial-time
selection result no longer applies. However, if there exists a subset of $\lceil \beta|\A|\rceil$ demand elements whose pairs
$(\Delta_a,\alpha_a)$ dominate those of all remaining elements componentwise,
then the same polynomial-time procedure applies by selecting this dominating
subset, since their associated service-level agreement constraints are uniformly
less restrictive.

Summarizing, note that the result above highlights a sharp contrast between the general conflict-aware
formulation and the conflict-free case.
When no incompatibilities are present, the optimal protection policy admits a
simple threshold structure based on the temporal slack $\Delta_a$.
Introducing conflict constraints destroys this ordering property and leads to
combinatorial selection effects that render the problem NP-hard, as established
earlier.

 \section{Numerical Experiments: NYC EMS Case Study}
\label{sec:experiments}

We assess the proposed exponential conic optimization framework using real incident-level
data from the New York City Emergency Medical Services (EMS) system (\url{data.cityofnewyork.us}). The goal of the experiments
is twofold. First, we evaluate the computational performance of the proposed compact
formulation against the Benders decomposition algorithm developed in this paper.
Second, we analyze the managerial implications of the optimized service rates under
different congestion and tail-risk profiles.

The dataset is obtained from the NYC Open Data EMS Incident Dispatch repository from 2025 and
contains individual emergency calls with detailed information about each incident. We construct our input dataset and parameters for our approach based on the following elements.
\begin{itemize}
    \item[-] \textbf{Boroughs.} We partition the whole dataset based on the borough where the incident occurred. We focus our analysis on the four most populated NYC boroughs: Manhattan, Bronx, Brooklyn, and Queens. 
    \item[-] \textbf{Regimes.} We consider that the regimes represent different initial call types from the EMS incidents, namely \texttt{CARDBR} (difficulty breathing with chest pain), \texttt{INJURY} (non-critical), \texttt{SICK}, and \texttt{UNC}	(unconscious patient).
    \item[-] \textbf{Time window scenarios.} We restrict the datasets to different time window scenarios within 2025 to generate datasets of different sizes and characteristics. In particular, we focus on: two scenarios based on incidents that occurred in night/peak hours within a single day; three scenarios based on single but whole days; and three scenarios for three days in a row, a week, and a year, respectively. The precise starting and ending instants (date/time) of each time window are detailed in Table~\ref{tab:profiles_and_scenarios} (right) and the sizes of each of the constructed datasets is shown in Table \ref{tab:ems_pi_mu_rho} (column $|\A|$).
    \item[-] \textbf{Parameter profiles.} We consider several service-level profiles reflecting different trade-offs 
between coverage, congestion, and tail-risk aversion. 
Each profile is characterized by: 
(i) a target coverage level $\beta$, which also determines the proportion of 
out-of-tail demand used in the conditional value-at-risk (CVaR), i.e., 
we set $\gamma = \beta$; 
(ii) a risk-aversion parameter $\psi$ controlling the CVaR threshold 
$\Gamma(\psi)$; 
(iii) a slack parameter $\alpha$ regulating the tolerance on response-time 
compliance; 
(iv) a regularization parameter $\kappa$ penalizing excessive service capacity; 
(v) a scaling parameter $\varphi$ defining the upper bound, 
$t_a^\star(\varphi)$ in the SLA constraints as a percentage of the 
actual completion time $t_a^c$, namely,
$t_a^\star = \varphi\, t_a^c$. 
The profiles range from strict high-coverage configurations to more flexible 
designs that allow greater congestion in exchange for reduced capacity 
requirements. The detailed parameter combinations are reported in 
Table~\ref{tab:profiles_and_scenarios} (left).

The six profiles represent distinct operational philosophies, namely:
\begin{itemize}
\item[·] \textbf{BAL (balanced).} A high coverage level ($\beta=0.95$) combined with moderate slack and neutral SLA scaling ($\varphi=1$) yields a benchmark configuration that balances reliability and capacity investment without emphasizing extreme tail protection.
\item[·] \textbf{COV (coverage-oriented).} A lower coverage target ($\beta=0.80$) but a slightly relaxed SLA bound ($\varphi=1.03$) shifts the focus toward average compliance rather than strict worst-case performance.
\item[·] \textbf{HARD (stringent compliance).} A very small tolerance ($\alpha=0.02$) and tighter SLA scaling ($\varphi=0.95$) impose strict response-time requirements and typically lead to higher required service capacity.
\item[·] \textbf{REL (reliability-focused).} Lower coverage ($\beta=0.70$) but strong tail aversion ($\psi=1.25$) emphasizes protection against extreme delays rather than universal compliance.
\item[·] \textbf{TAIL+ (tail protection under tight compliance).} Very small tolerance ($\alpha=0.01$) combined with neutral SLA scaling ($\varphi=1$) enforces tight compliance while focusing on delay dispersion control.
\item[·] \textbf{TIGHT+ (aggressive SLA tightening).} A substantially reduced SLA bound ($\varphi=0.88$) makes response-time targets significantly stricter than historical completion times and typically requires substantial capacity upgrades.
\end{itemize}

Overall, the profiles allow decision-makers to explicitly quantify the trade-off between investment in capacity (through $\kappa$ and the induced service rates) and reliability guarantees (through $\beta$, $\psi$, $\alpha$, and $\varphi$). Stricter SLA scaling and smaller slack parameters increase robustness and punctuality but at the expense of higher operational costs, while more relaxed configurations reduce capacity needs at the cost of greater congestion risk.
\item[-] \textbf{Conflict graphs.} From each filtered dataset, we build an undirected conflict graph whose nodes correspond to individual EMS incidents. Two incidents are connected by an edge if they occur sufficiently close in time and space, capturing short-term competition for shared resources. Incidents are grouped by dispatch area and ordered by time. Within each group, we connect any two incidents whose occurrence times differ by at most one minute. The resulting graph captures short temporal bursts of demand within the same operational area.
\end{itemize} 

\begin{table}[t]
\centering
\caption{Service-level profiles (left) and observation windows (right) used in the experiments.}
\label{tab:profiles_and_scenarios}
\begin{minipage}[t]{0.5\textwidth}
\centering
\small
\renewcommand{\arraystretch}{1.05}
\begin{tabular}{lccccc}
\toprule
\textbf{Profile} & \textbf{$\beta$} & \textbf{$\alpha$} & \textbf{$\psi$} & \textbf{$\varphi$} & \textbf{$\kappa$} \\
\midrule
BAL    & 0.95 & 0.05 & 1.10 & 1.00 & 0.10 \\
COV    & 0.80 & 0.07 & 1.15 & 1.03 & 0.10 \\
HARD   & 0.90 & 0.02 & 1.05 & 0.95 & 0.10 \\
REL    & 0.70 & 0.10 & 1.25 & 1.08 & 0.10 \\
TAIL+  & 0.75 & 0.01 & 1.00 & 1.00 & 0.10 \\
TIGHT+ & 0.80 & 0.05 & 1.08 & 0.88 & 0.10 \\
\bottomrule
\end{tabular}
\end{minipage}~
\begin{minipage}[t]{0.5\textwidth}
\centering
\small
\renewcommand{\arraystretch}{1.05}
\begin{tabular}{lcc}
\toprule
\textbf{Scenario} & \textbf{Start} & \textbf{End} \\
\midrule
night & 2025-03-03 20:00 & 2025-03-04 08:00 \\
peak  & 2025-03-03 08:00 & 2025-03-03 20:00 \\
D1 & 2025-03-01 00:00 & 2025-03-02 00:00 \\
D2 & 2025-03-02 00:00 & 2025-03-03 00:00 \\
D3 & 2025-03-03 00:00 & 2025-03-04 00:00 \\
3days & 2025-03-01 00:00 & 2025-03-04 00:00 \\
week  & 2025-03-01 00:00 & 2025-03-08 00:00 \\
year  & 2025-01-01 00:00 & 2026-01-01 00:00 \\
\bottomrule
\end{tabular}
\end{minipage}

\end{table}


\begin{table}[t]
\centering
\small
\caption{Summary of sizes, mixture probabilities, and conflict edges for each borough and scenario in our experiments.}
\label{tab:ems_pi_mu_rho}
\begin{adjustbox}{width=\linewidth}
\begin{tabular}{llccc @{\hspace{1.2em}} llccc}
\toprule
Borough & Scenario & $|\mathcal{A}|$ & $|\mathcal{E}|$ & $\pi$
& Borough & Scenario & $|\mathcal{A}|$ & $|\mathcal{E}|$ & $\pi$ \\
\midrule
\multirow{8}{*}{Bronx}  & night & 110 & 3 & \makecell{$(0.3182,0.2455,0.3273,0.1091)$}
& \multirow{8}{*}{Manhattan} &  night & 101 & 2 & \makecell{$(0.3168,0.2574,0.2772,0.1485)$} \\

 & peak & 178 & 10 & \makecell{$(0.3539,0.2303,0.2079,0.2079)$}
&  & peak & 165 & 8 & \makecell{$(0.2061,0.3697,0.2424,0.1818)$} \\

 & D1 & 243 & 6 & \makecell{$(0.2675,0.2387,0.3004,0.1934)$}
&  & D1 & 263 & 10 & \makecell{$(0.1711,0.3156,0.2928,0.2205)$} \\

 & D2 & 236 & 6 & \makecell{$(0.3390,0.3051,0.2331,0.1229)$}
&  & D2 & 233 & 2 & \makecell{$(0.2275,0.3219,0.2833,0.1674)$} \\

 & D3 & 287 & 15 & \makecell{$(0.3693,0.2125,0.2369,0.1812)$}
 & & D3 & 259 & 9 & \makecell{$(0.2394,0.3359,0.2548,0.1699)$} \\

& 3days & 576 & 19 & \makecell{$(0.3455,0.2535,0.2413,0.1597)$}
& & 3days & 526 & 11 & \makecell{$(0.2262,0.3213,0.2757,0.1768)$} \\

 & week & 1781 & 73 & \makecell{$(0.3088,0.2718,0.2673,0.1522)$}
&  & week & 1887 & 55 & \makecell{$(0.2194,0.3153,0.2719,0.1934)$} \\

\midrule
\multirow{8}{*}{Brooklyn}  & night & 104 & 5 & \makecell{$(0.2596,0.2981,0.3558,0.0865)$}
& \multirow{8}{*}{Queens} &  night & 86 & 0 & \makecell{$(0.1977,0.3372,0.2674,0.1977)$} \\

 & peak & 195 & 5 & \makecell{$(0.2821,0.2974,0.2410,0.1795)$}
&  & peak & 129 & 2 & \makecell{$(0.2403,0.2791,0.3101,0.1705)$} \\

 & D1 & 321 & 9 & \makecell{$(0.2087,0.3115,0.3084,0.1713)$}
&  & D1 & 209 & 6 & \makecell{$(0.2057,0.3254,0.3014,0.1675)$} \\

 & D2 & 299 & 11 & \makecell{$(0.2408,0.2575,0.2977,0.2040)$}
&  & D2 & 226 & 5 & \makecell{$(0.1991,0.3717,0.2699,0.1593)$} \\

 & D3 & 316 & 8 & \makecell{$(0.2911,0.2943,0.2690,0.1456)$}
&  & D3 & 228 & 3 & \makecell{$(0.2325,0.3246,0.2719,0.1711)$} \\

& 3days & 684 & 24 & \makecell{$(0.2617,0.2822,0.2880,0.1681)$}
& & 3days & 495 & 10 & \makecell{$(0.2141,0.3515,0.2727,0.1616)$} \\

 & week & 2209 & 73 & \makecell{$(0.2399,0.2947,0.2988,0.1666)$}
&  & week & 1613 & 42 & \makecell{$(0.2381,0.3292,0.2858,0.1469)$} \\
\bottomrule
\end{tabular}
\end{adjustbox}
\raggedright
\footnotesize
\\[.1cm]
\textit{Note: the summary for scenario \textsc{year} is detailed in Table~\ref{tab:case_pi_sizes}.}
\end{table}

For each borough, scenario, and profile, we solve the service system design problem using two
approaches. The first is the compact mixed-integer exponential cone formulation \ref{SSD(C/TR)-ECP} solved directly with the 
mixed-integer conic optimizer Mosek. The second is the Benders decomposition scheme, described in Section \ref{sec:benders} and detailed in Appendix \ref{app:benders}, in which the
master problem determines the set of protected users (under coverage and conflict constraints), while subproblem yields service rates, evaluates
the tail-risk and service-level constraints, and generate cutting planes.  Both approaches
are implemented in Mosek Fussion Python API  using the same solver settings. All instances were solved with Gurobi 12 on macOS running on a Mac Studio equipped with an Apple M1 Max chip (10-core CPU: 8 performance and 2 efficiency cores) and 64 GB of unified memory. 

The computational study compares the two approaches in terms of total running time, number
of instances solved to optimality within the time limit, and objective value consistency.
This allows us to assess the scalability and numerical robustness of the proposed
decomposition relative to the compact model.

In the second part of the analysis, we examine the structure of the optimized solutions.
We study how total response times (and its CVaR), congestion level, and cost vary across boroughs and profiles. These results provide managerial
insights into how congestion control and tail-risk constraints shape optimal EMS service
design decisions in large urban systems.

\subsection*{Computational Performance}

We begin by assessing the overall computational performance of the two solution approaches, namely the compact mixed-integer exponential cone formulation (\textbf{compact}) and the decomposition strategy described in Section~\ref{sec:benders} and Appendix~\ref{app:benders} (\textbf{decomposition}). 

Figure~\ref{fig:pp_all} presents the corresponding performance profiles, reporting the percentage of instances solved within a given CPU time. The left panel shows the full time horizon, while the right one zooms into instances solved within $10$ seconds (approximately $10\%$ of the sample), highlighting early-time behavior.

The \textbf{decomposition} approach exhibits a markedly steeper growth from the very first seconds, solving a substantial fraction of instances almost immediately and reaching near-complete coverage significantly earlier than the \textbf{compact} formulation. In contrast, the compact model displays slower progress and a pronounced computational tail, with a non-negligible subset of instances requiring substantially longer running times.

When all instances are considered jointly, the dominance of the decomposition strategy is evident across the entire time spectrum. Not only does it solve easy instances rapidly, but it also shows significantly greater stability when confronted with large-scale or structurally challenging cases. The compact formulation, by comparison, is considerably more sensitive to difficult parameter configurations, leading to delayed convergence and heavier tail behavior. Performance profiles disaggregated by borough are reported in Appendix~\ref{app:pp_borough}, where the same qualitative pattern consistently holds.

 \begin{figure}
 \centering
 {\includegraphics[width=0.5\textwidth]{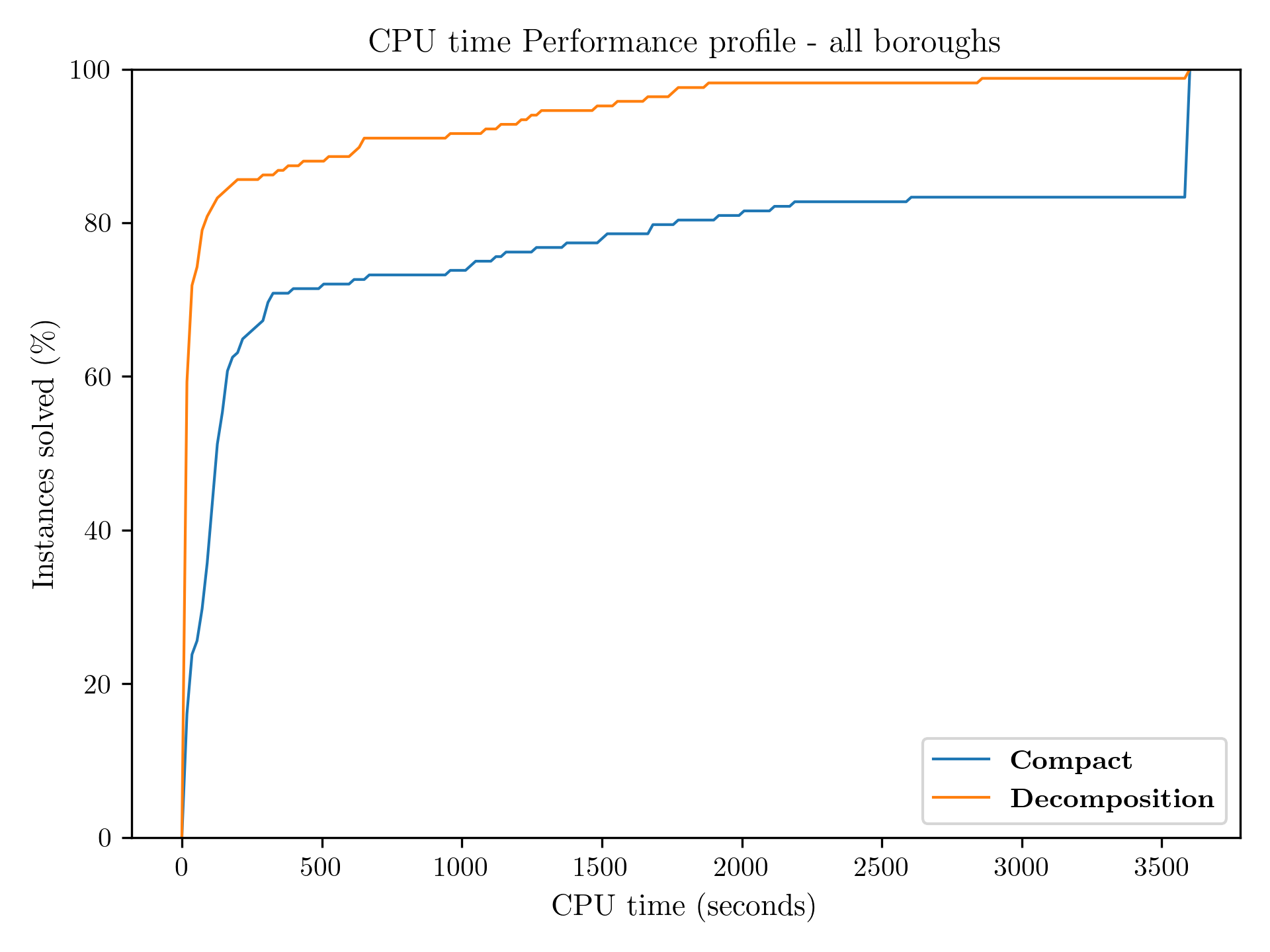}~
\includegraphics[width=0.5\textwidth]{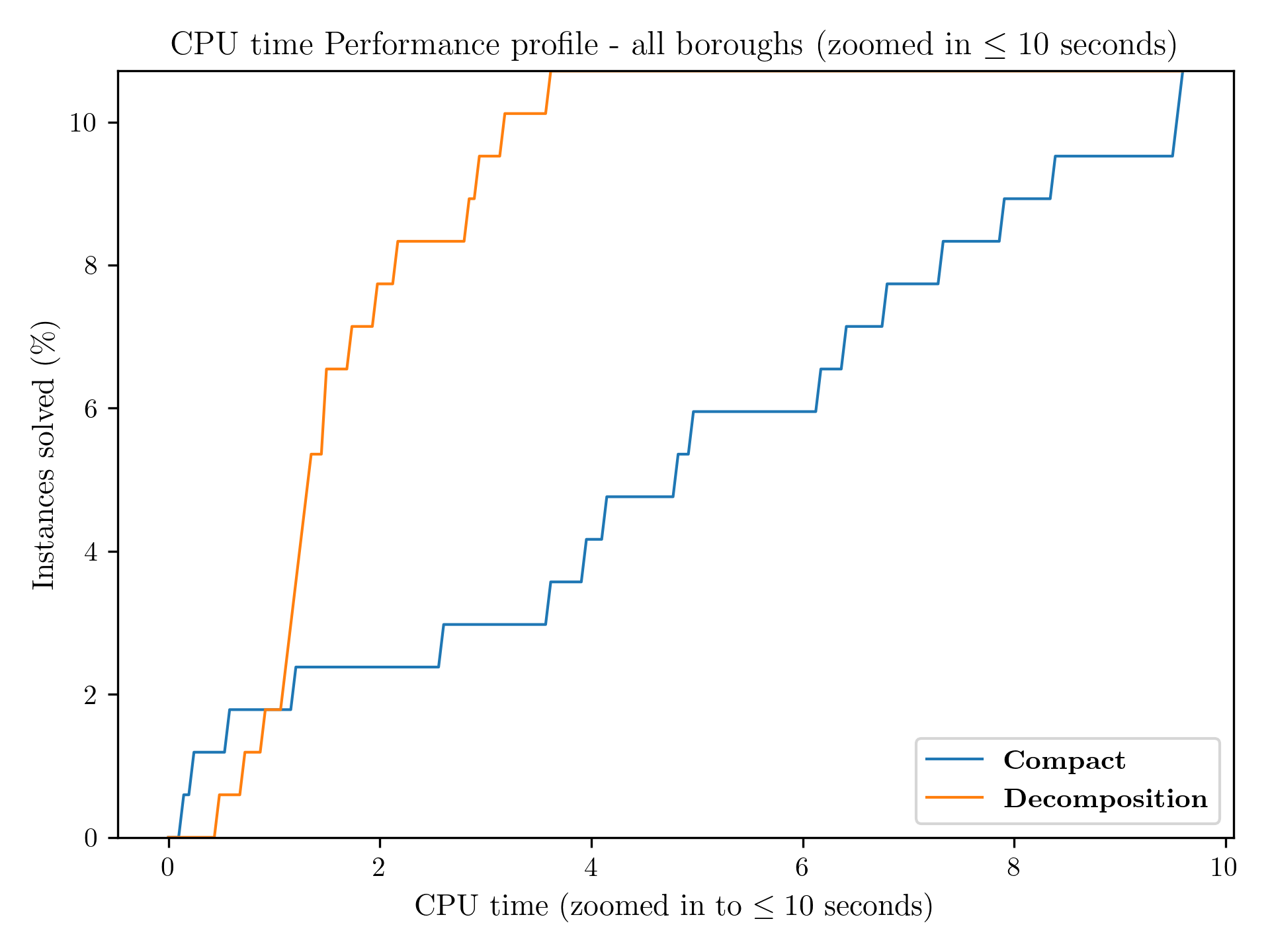}}
{Performance profile of CPU time for all instances (left) and zoomed view for those solved within $10$ seconds (right).\label{fig:pp_all}}
{}
\end{figure}

The global profile clearly illustrates the principal computational advantage of the decomposition approach: while both methods handle the easiest instances efficiently, the decomposition remains robust as instance size and difficulty increase. This feature is particularly valuable in exploratory, sensitivity, or scenario-based analyses, where a large number of related instances must be solved repeatedly and predictable computational effort is essential.

Consistently with the performance profiles, $25\%$ of the instances were not solved to proven optimality within the one-hour time limit when using the \textbf{compact} formulation. Among the unsolved instances, the average residual MIP gap equals $58.60\%$ in Bronx, $58.70\%$ in Brooklyn, $68.82\%$ in Manhattan, and $51.70\%$ in Queens. These gaps further confirm that the compact formulation struggles on a substantial subset of challenging instances, whereas the decomposition approach achieves markedly stronger overall time-to-solution performance with all instances optimally solved.

In order to assess the computational impact of the different profiles (and, indirectly, of the parameter configurations they induce), Figure~\ref{fig:bp_all} reports boxplots of CPU times (in seconds) for each profile (left) and time window scenario (right), comparing the \textbf{compact} and \textbf{decomposition} approaches removing outlier instances from both approaches. In Appendix~\ref{app:pp_borough}, the reader can found the analogous boxplots disaggregated by borough.

Across all profiles, the \textbf{decomposition} approach yields substantially smaller CPU times and a much tighter dispersion, with medians close to the origin and comparatively short upper whiskers. In contrast, the \textbf{compact} formulation exhibits large variability and a pronounced right tail in every profile, with many instances approaching the one-hour limit. The gap between both approaches becomes particularly visible for the most demanding profiles TAIL+, COV, and HARD, where the compact model shows heavy-tailed behavior and frequent time-limit hits, while the \textbf{decomposition} remains consistently fast and stable. Overall, these boxplots reinforce the conclusions from the performance profiles: the \textbf{decomposition} strategy is markedly less sensitive to adverse parameter combinations and provides substantially more predictable computational effort across scenarios.
 \begin{figure}
 \centering
 {\includegraphics[width=0.55\textwidth]{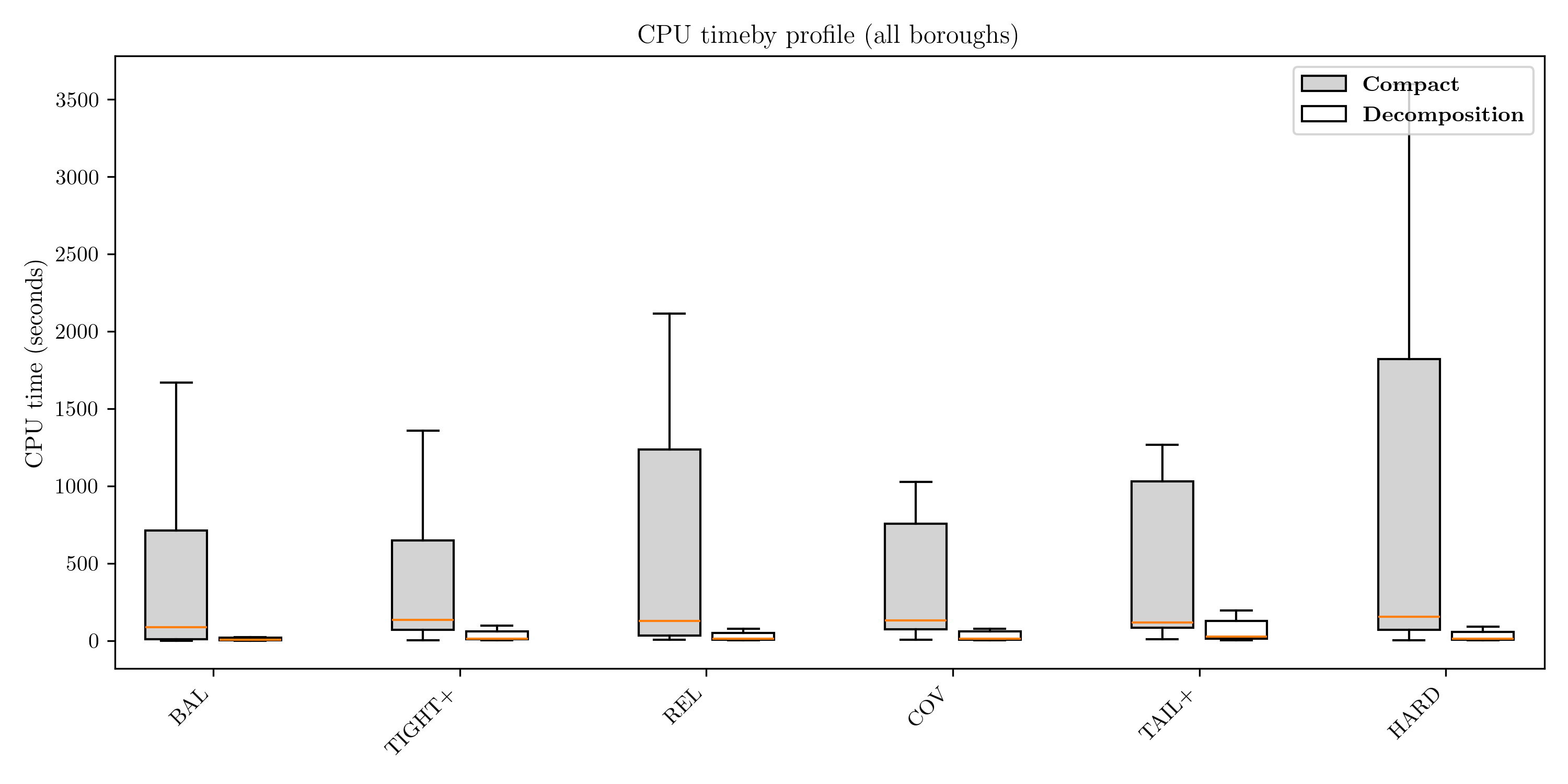}~\includegraphics[width=0.45\textwidth]{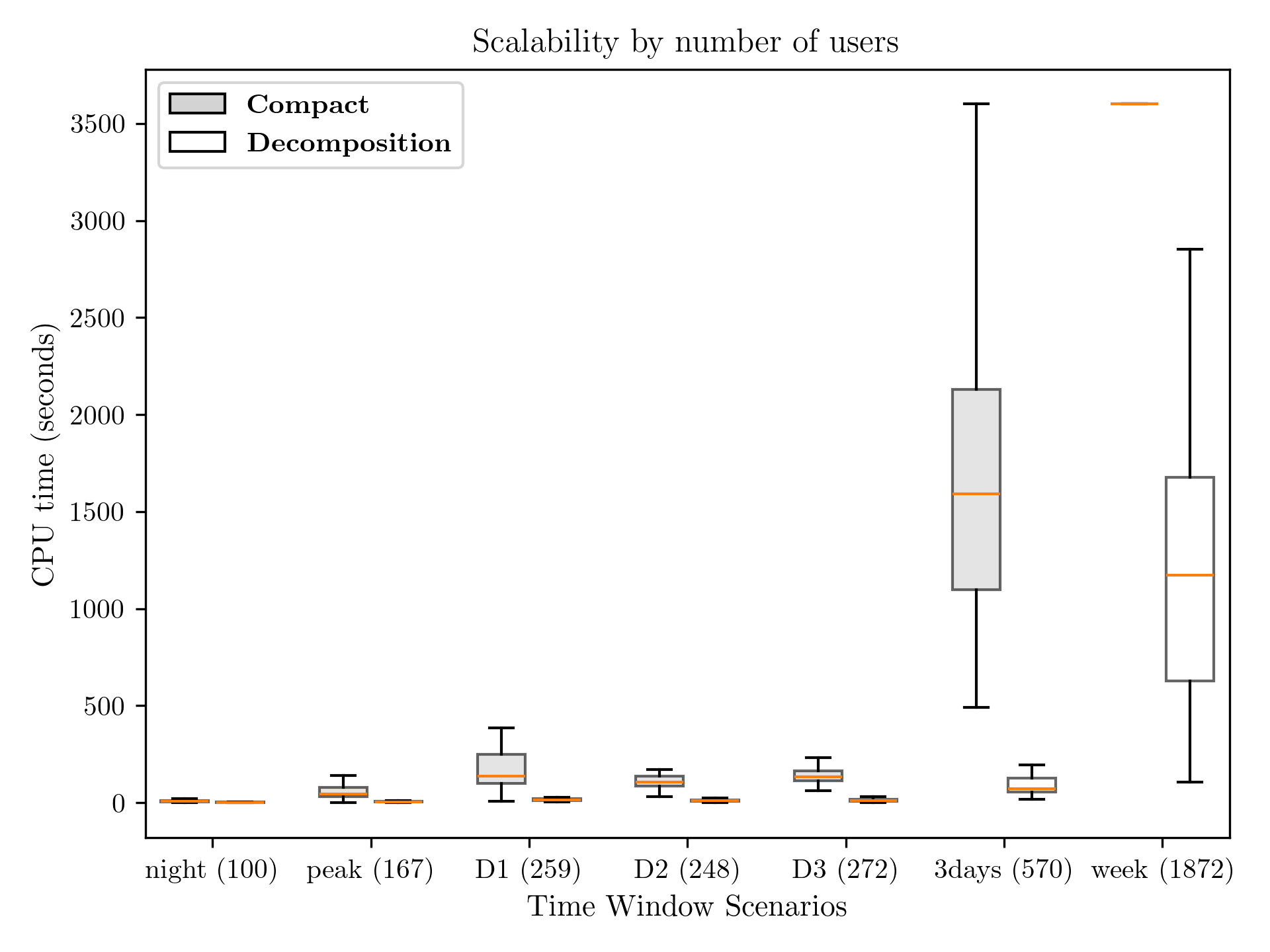}}
{Boxplots of CPU time by profile (left) and time window scenario (right). The distribution of CPU times is shown for the \textbf{compact} and \textbf{decomposition} approaches.\label{fig:bp_all}}
{}
\end{figure}
For the different time window scenarios, that also represent different number of users/incidents, a clear monotonic trend is observed for both approaches: CPU times increase as the average number of users grows, with the most pronounced effects occurring in the \textsc{3days} and \textsc{week} scenarios (triplicating in average the number of users/incidences). However, as we have already observed the growth rate differs substantially between formulations. The \textbf{compact} model exhibits a sharp increase in both median solution time and dispersion as the number of users rises, together with heavy-tailed behavior and frequent time-limit hits in the largest scenarios. In contrast, the \textbf{decomposition} approach scales significantly more gracefully: although computation times increase for larger windows, the median and interquartile ranges remain markedly smaller, and the variability is substantially reduced.

The difference becomes particularly striking in the \textsc{week} scenario, where the \textbf{compact} formulation frequently approaches consistently  the one-hour limit, whereas the \textbf{decomposition} method maintains substantially lower median times and improved stability. Overall, these results confirm that the \textbf{decomposition} strategy not only accelerates solution times in small and medium instances, but also provides superior scalability and robustness as the problem dimension increases.

Finally, we analyze the managerial implications of solving the proposed problem efficiently. In particular, we compare the quality of the objective values obtained by the two approaches on the same instances. Although the \textbf{compact} formulation is computationally more demanding than the \textbf{decomposition} approach to certify optimality, one might expect that both methods eventually identify solutions of comparable quality within the time limit. However, this is generally not the case.

To quantify these differences, we report the relative percent deviation of the best solution obtained with the \textbf{compact} formulation with respect to the optimal solution provided by the \textbf{decomposition} approach. Specifically, we define
$$
{\rm dev} = 100 \frac{{\rm BestObj}_{\rm compact} - {\rm OptObj}_{\rm decomposition}}{{\rm BestObj}_{\rm compact}}\% .
$$
Since the \textbf{decomposition} approach solves the instances to optimality within the time limit, this deviation is always nonnegative and measures the loss in solution quality incurred by relying on the \textbf{compact} formulation under time constraints.

Figure~\ref{fig:devs} presents boxplots of these deviations disaggregated by borough (left), profile (center), and time window scenario (right).
 \begin{figure}
 \centering
 {\includegraphics[width=0.32\textwidth]{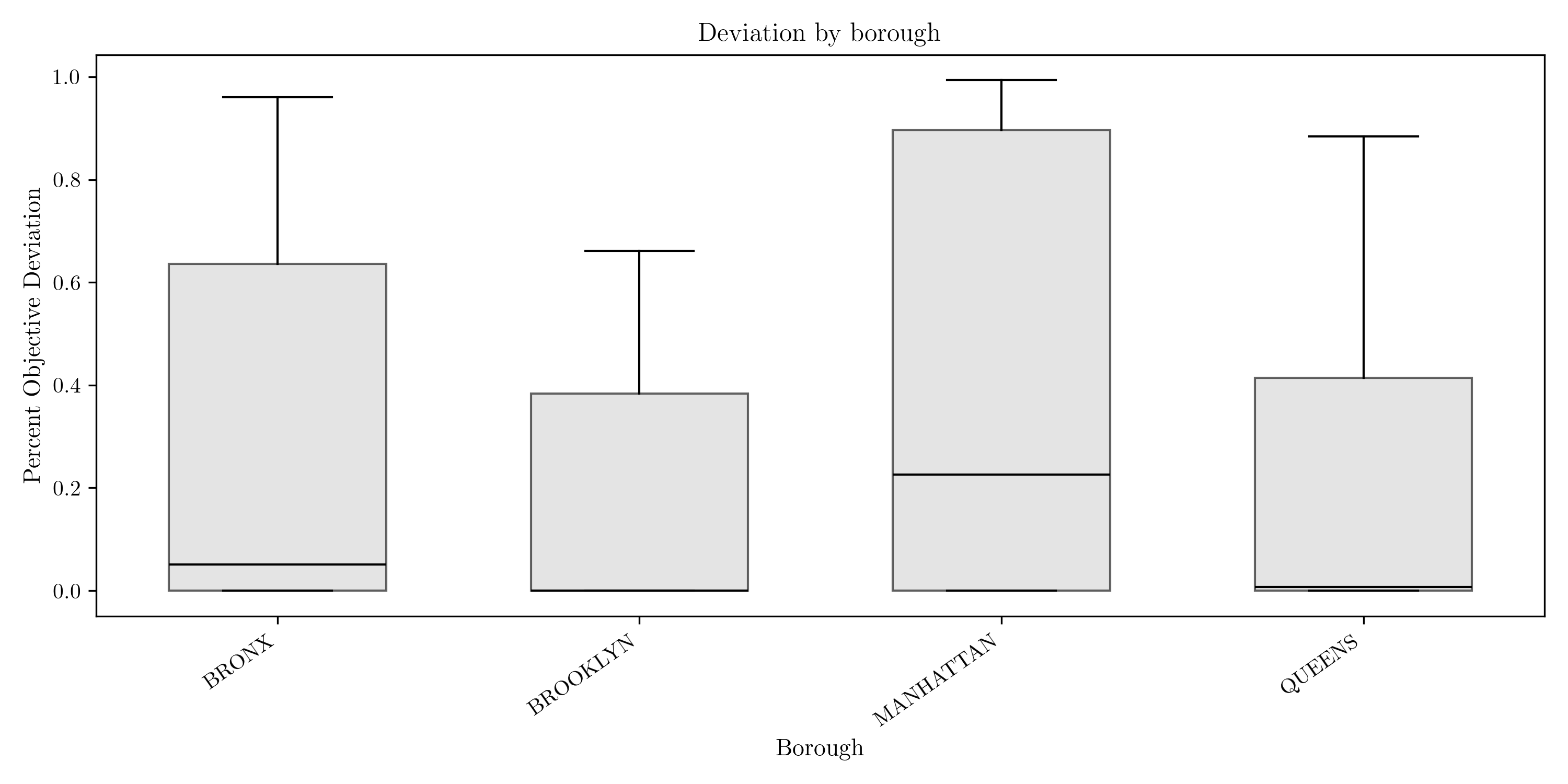}~\includegraphics[width=0.32\textwidth]{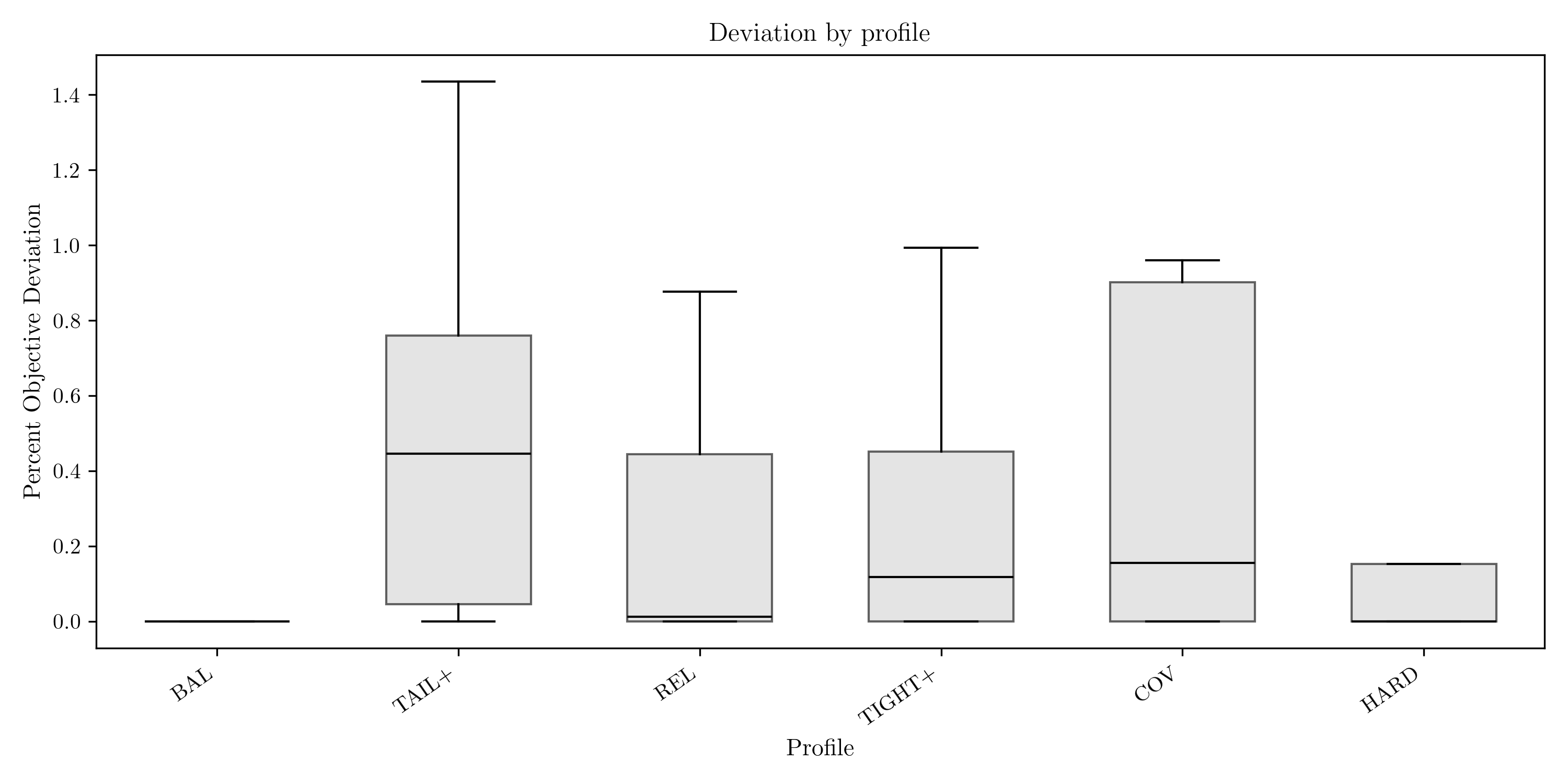}~\includegraphics[width=0.32\textwidth]{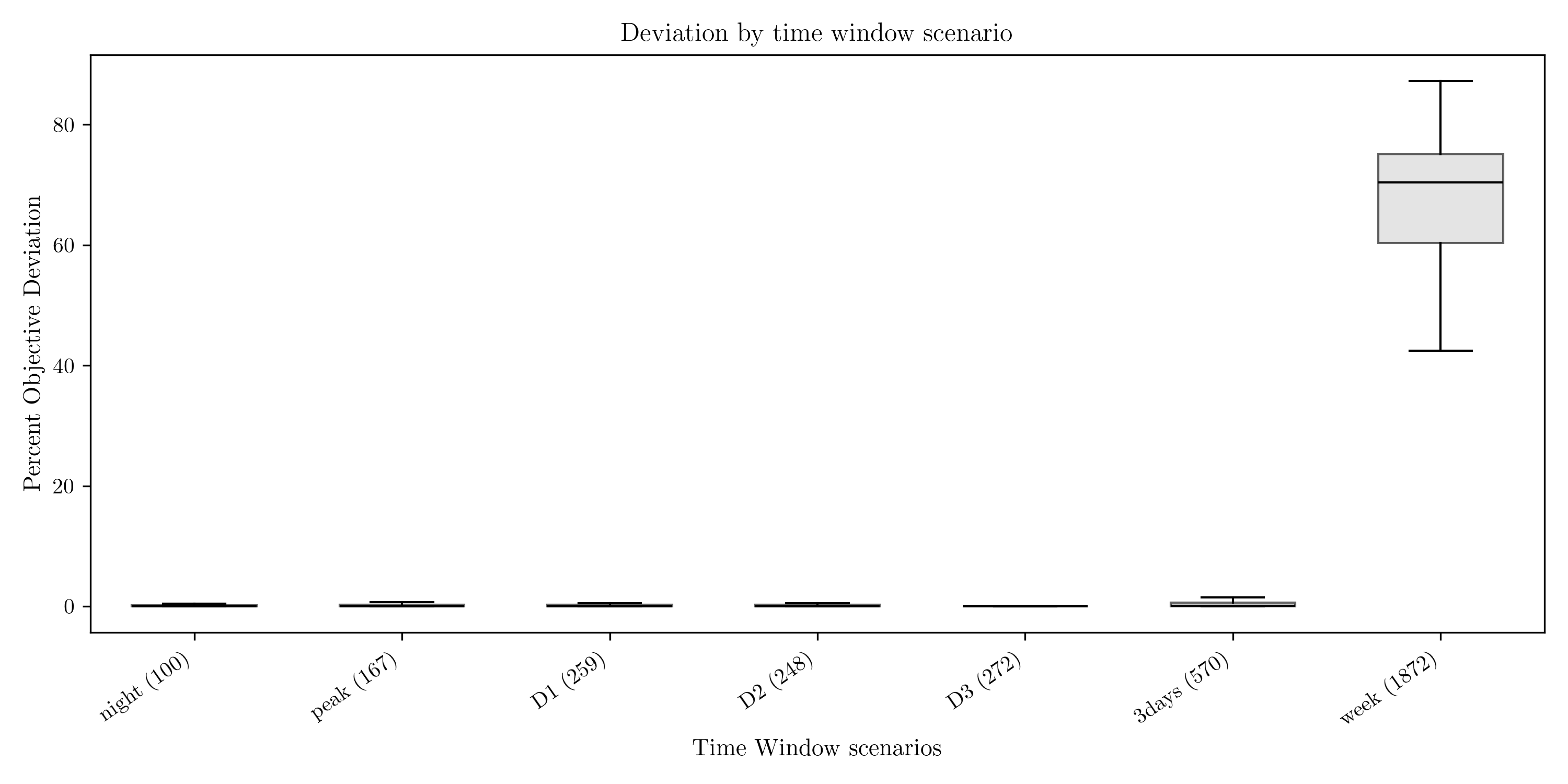}}
{Deviations of best solutions obtained with our approaches.\label{fig:devs}}
{}
\end{figure}

The results reveal substantial variability in the quality loss associated with the \textbf{compact} formulation. By profile, the largest deviations occur under the more demanding configurations (notably \textsc{TAIL+} and \textsc{COV}), where both the median and dispersion are markedly higher. In contrast, the \textsc{BAL} profile exhibits negligible deviations, indicating that easier parameter regimes allow both approaches to achieve similar solutions within the time limit.

The time-window analysis highlights an even more pronounced effect of problem scale. While short windows (e.g., \textsc{night}, \textsc{peak}, and daily scenarios) display near-zero deviations, the \textsc{week} scenario shows dramatically larger percent deviations, with consistently high medians and limited overlap with smaller windows. This confirms that as the number of users increases, the \textbf{compact} formulation struggles not only in computational time but also in solution quality.

Finally, the borough-level results indicate that Manhattan exhibits systematically larger deviations and greater dispersion, suggesting that its instances are structurally more challenging from a computational standpoint. Overall, these findings demonstrate that the \textbf{decomposition} approach delivers not only superior computational efficiency but also substantially improved solution quality in large-scale and demanding scenarios. From a managerial perspective, this distinction is critical: achieving proven optimality is essential, since prematurely terminating the solution process at the time limit may result in significant losses in objective value, particularly in the most complex and large-scale input data.

\subsection*{Case Study: Managerial Insights from a Borough Level Urban Service System}

After completing the computational performance analysis, we now turn to the managerial implications of the proposed framework. To this end, we consider a large-scale case study based on the complete EMS NYC dataset for the year 2025, extracted and processed analogously to the experimental setting described above. In contrast to the controlled benchmarking instances used for performance evaluation, this dataset reflects the full operational scale of the system over an entire year, thereby providing a realistic and practically meaningful environment for policy analysis. 

To facilitate interpretation and isolate the economic trade-offs induced by the service design decisions, we omit the conflict constraints in this analysis. In this setting, the resulting problem becomes polynomially solvable via Algorithm~\ref{alg:poly_no_conflicts}, allowing us to focus exclusively on structural and managerial insights rather than computational limitations. The problems for the different boroughs were solved in 45 seconds in average (max: 80 seconds, min: 28 seconds). The sizes of the borough-level instances and the estimated mixture weights $\pi$, capturing heterogeneous service-time regimes, are reported in Table~\ref{tab:case_pi_sizes}. 

\begin{table}[t]
\centering
\renewcommand{\arraystretch}{1.25}
\setlength{\tabcolsep}{6pt}
\caption{Estimated mixture weights $\pi$ by service regime and user population sizes by borough for the full-year 2025 dataset.}
\label{tab:case_pi_sizes}
{\small
\begin{tabular}{l c c c c c}
\hline
& \multicolumn{4}{c}{$\pi$ by service regime} & \\
\cline{2-5}
\textbf{Borough} 
& \texttt{CARDBR} & \texttt{INJURY} & \texttt{SICK} & \texttt{UNC} & \textbf{Incidents} \\
\hline
Manhattan & 0.1967 & 0.2893 & 0.2473 & 0.2667 & $66,780$ \\
Bronx     & 0.2897 & 0.2846 & 0.2536 & 0.1721 & $62,818$ \\
Brooklyn  & 0.2309 & 0.3013 & 0.2744 & 0.1934 & $79,975$ \\
Queens    & 0.2287 & 0.3229 & 0.2711 & 0.1773 & $55,681$ \\
\hline
\end{tabular}}
\end{table}

The goal of this section is to quantify and interpret the trade-offs embedded in the proposed service system design. In particular, we address four central managerial questions:

\begin{itemize}
    \item[-] \textbf{The price of efficiency.} What is the gain in total response time achieved by the proposed design relative to baseline configurations?
    \item[-] \textbf{The price of congestion control.} How does the design affect system load and operational stability?
    \item[-] \textbf{The price of fairness.} To what extent does the model protect the least-favored users?
    \item[-] \textbf{The price of robustness.} What is the cost of incorporating robustness in the service system design?
\end{itemize}

The terminology follows the seminal notion of the \emph{price of robustness} introduced by \citet{bertsimas2004price}, adapted here to the multi-criteria service system context. By analyzing these four dimensions jointly, we provide a comprehensive assessment of the operational, economic, and societal implications of the proposed optimization framework under realistic large-scale conditions.

To compare the proposed service system design with the one effectively implemented in practice, we first estimate the service rates of the queueing system from historical data. Specifically, for each borough and call type (service regime), we compute empirical service-time estimates based on observed incident durations. Let $S$ denote the sojourn time in minutes, obtained as the difference between the assignment time and the incident close time for valid observations (i.e., strictly positive and finite durations). For each call type $r$, we compute the sample mean of the sojourn time $
\widehat{s}_r \;=\; \frac{1}{n_r}\sum_{i=1}^{n_r} S_{ri}$, where $n_r$ is the number of valid observations for regime $r$. 

The estimated service rate is constructed as $
\widehat{\mu}_r \;=\; \frac{1}{\widehat{s}_r} + \Lambda_r$, where $\Lambda_r$ denotes the empirical arrival rate of the system under regime $r$. This adjustment accounts for the effective workload in the observed system configuration and ensures consistency with the steady-state balance condition of the queue. These borough- and regime-specific service rates provide a data-driven proxy of the performance of the current status quo system, which serves as the benchmark against which the proposed optimized design is assessed. The resulting estimates, based on the full-year 2025 dataset, are reported in Table~\ref{tab:lambda_mu_s}. 

\begin{table}[t]
\centering

\caption{Estimated arrival rates $\Lambda_r$, service rates $\widehat{\mu}_r$, and average service times $1/\widehat{\mu}_r$ by borough and regime.}
\label{tab:lambda_mu_s}
\adjustbox{width=\textwidth}{
\begin{tabular}{l | c c c c | c c c c | c c c c}
\hline
& \multicolumn{4}{c}{Arrival rates ($\Lambda$)} 
& \multicolumn{4}{c}{Service rate ($\widehat{\mu}$)} 
& \multicolumn{4}{c}{Avg.\ service time in minutes ($1/\widehat{\mu}$)} \\
\cline{2-5} \cline{6-9} \cline{10-13}
\textbf{Borough}
&\texttt{CARDBR} & \texttt{INJURY} & \texttt{SICK} & \texttt{UNC}
& \texttt{CARDBR} & \texttt{INJURY} & \texttt{SICK} & \texttt{UNC}
& \texttt{CARDBR} & \texttt{INJURY} & \texttt{SICK} & \texttt{UNC} \\
\hline
Manhattan 
& 0.0375 & 0.0552 & 0.0472 & 0.0509 
& 0.0505 & 0.0712 & 0.0613 & 0.0705 
& 19.82 & 14.05 & 16.30 & 14.18 \\

Bronx 
& 0.0520 & 0.0511 & 0.0455 & 0.0309 
& 0.0643 & 0.0669 & 0.0591 & 0.0474 
& 15.55 & 14.96 & 16.91 & 21.09 \\

Brooklyn 
& 0.0528 & 0.0689 & 0.0627 & 0.0442 
& 0.0656 & 0.0843 & 0.0762 & 0.0612 
& 15.24 & 11.86 & 13.13 & 16.34 \\

Queens 
& 0.0364 & 0.0514 & 0.0431 & 0.0282 
& 0.0495 & 0.0669 & 0.0570 & 0.0446 
& 20.18 & 14.95 & 17.55 & 22.43 \\
\hline
\end{tabular}}
\end{table}

The estimates reveal substantial heterogeneity across boroughs and regimes, both in arrival and service rates. Brooklyn exhibits the highest service rates (particularly under regime~2), whereas Queens and Manhattan display comparatively lower service rates in regimes~1 and~4, which translate into longer average service times. At the same time, arrival rates vary significantly across boroughs, with Brooklyn and Manhattan experiencing consistently higher demand intensities in several regimes. 

The joint variation of $\Lambda_r$ and $\widehat{\mu}_r$ implies heterogeneous utilization levels and congestion exposure across the system. In particular, regimes combining high arrival rates with relatively modest service rates are structurally more vulnerable to congestion. These differences highlight the operational imbalance across boroughs and reinforce the importance of borough-specific and regime-aware service system design decisions.

\begin{itemize}
\item[-] {\bf The price of efficiency (gains in total response time). } The \emph{price of efficiency} quantifies the reduction in total response time obtained by optimally redesigning the service system relative to the current data-driven configuration. Since response time directly impacts service quality, reliability, and, particularly in emergency settings, safety outcomes, improving this metric is of primary managerial relevance.

Figure~\ref{fig:violinplots} presents the global comparison. The left panel shows the full distribution of total response times, and the right panel reports the corresponding conditional value-at-risk (CVaR). The optimized design produces a dramatic and systematic leftward shift of the entire distribution. Median response times decrease by approximately one hour, dispersion is slightly reduced, and extreme delays are significantly mitigated. Statistical tests confirm overwhelming significance. The plots additionally report the results of a parametric paired $t$-test for equality of total response time means (left bottom corner), including the test statistic ($t$), the corresponding $p$-value, the achieved statistical power, and a $95\%$ confidence interval for the effect size ($\delta$), interpreted as the true difference in means. Analogously, in the right plot, we report a summary of the nonparametric paired Wilcoxon test for the homogeneity of CVaR of the total response time groups, including the test statistic ($w$), the $p$-value, and a $95\%$ confidence interval for the true median of the differences ($\eta$). For the total response time, values of  $p=0.0^{***}$ and power $=100\%$, with an estimated mean reduction of roughly 60 minutes and an exceptionally tight confidence interval point out and quantify the price of efficiency. Hence, the improvement is not only statistically significant but also operationally large.

 \begin{figure}
 {\includegraphics[width=0.5\textwidth]{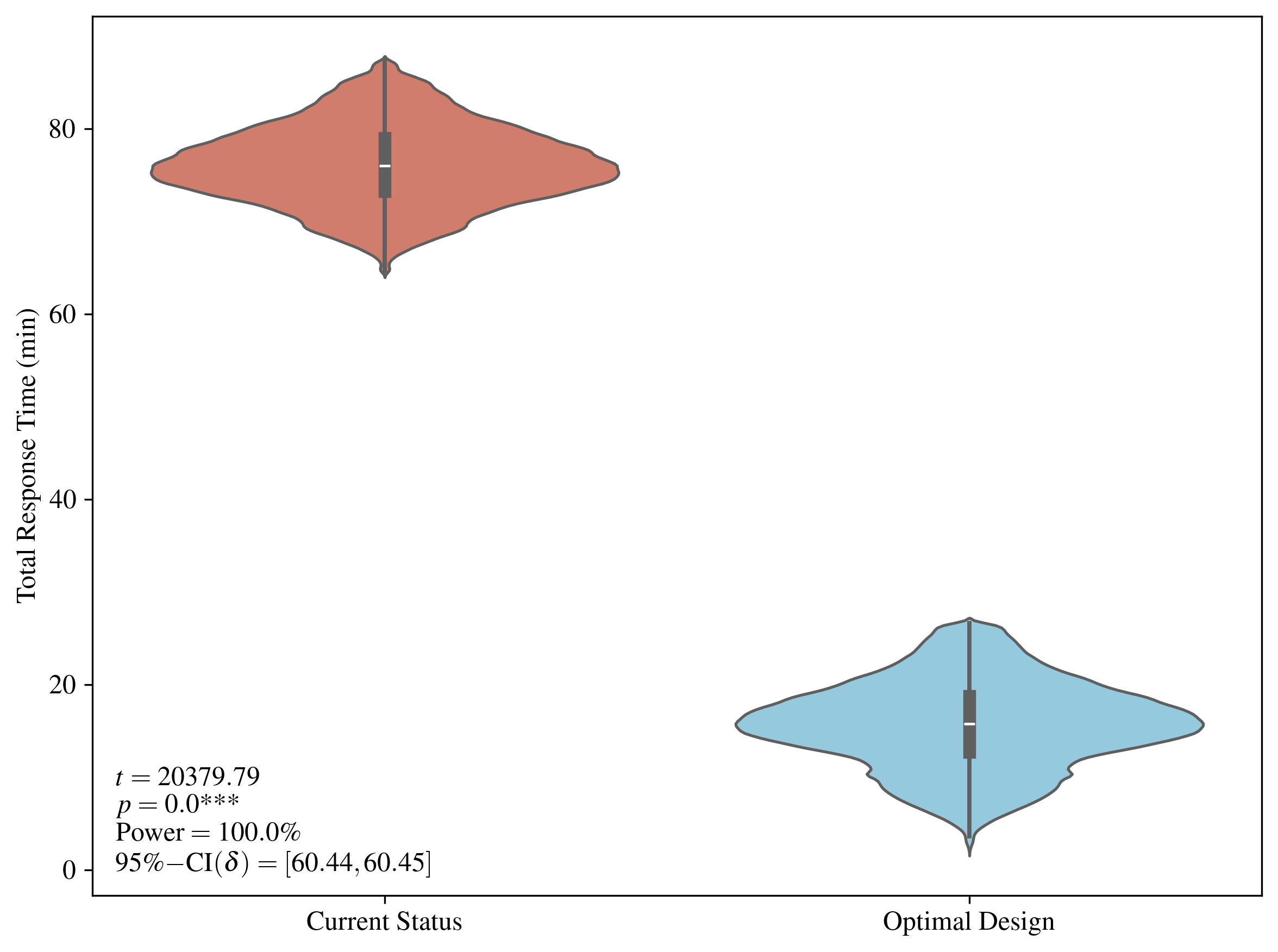}~
\includegraphics[width=0.5\textwidth]{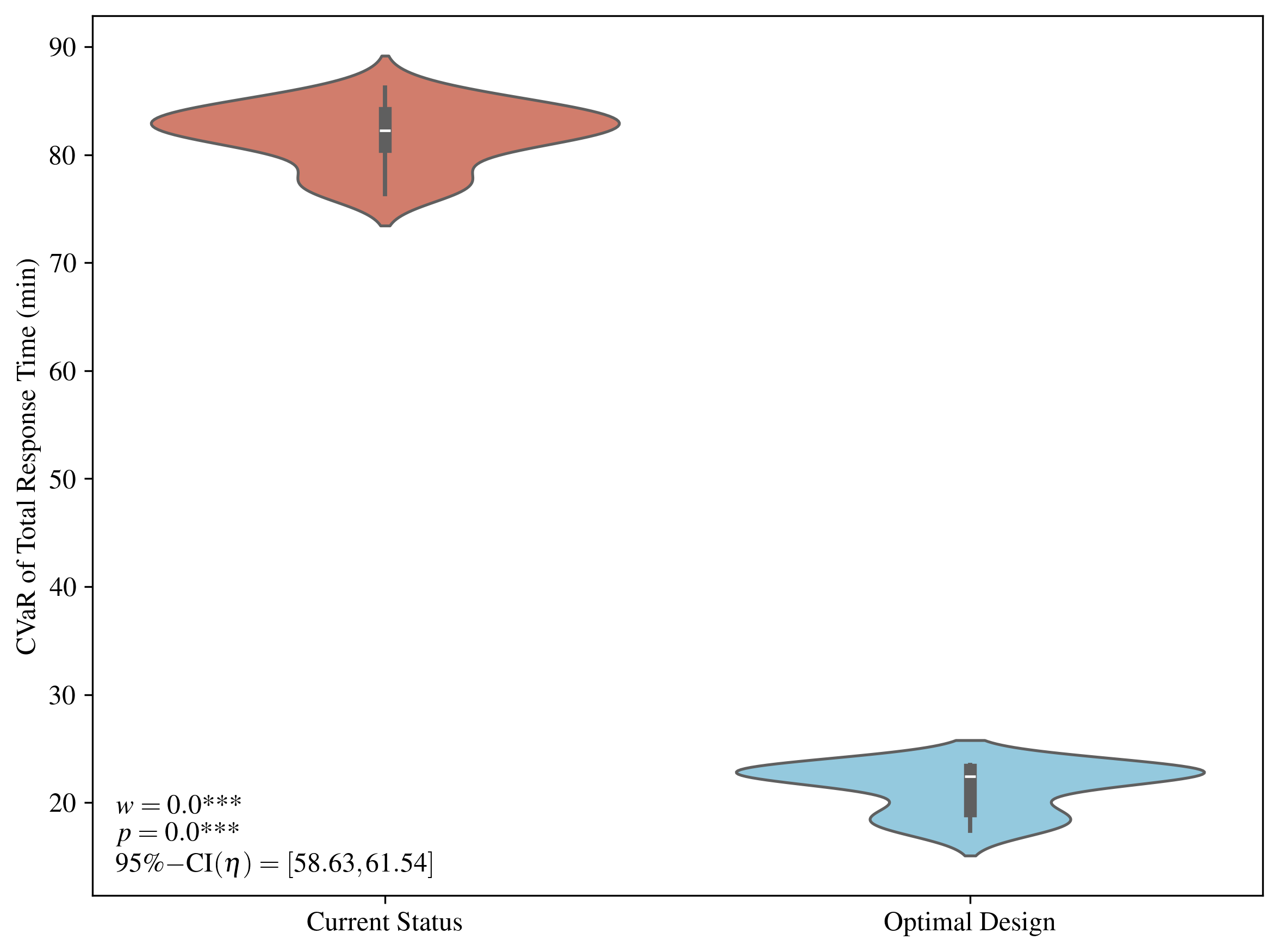}}
{Global comparison of total response time (left) and its CVaR (right), in minutes, between the current system and the optimal design.\label{fig:violinplots}}
{}
\end{figure}

The spatial robustness of these gains is illustrated in Figure~\ref{fig:boxplot_response_time_borough} (in Appendix \ref{sec:case_study}), which disaggregates total response times by borough. In every borough the optimal design uniformly reduces median response times from approximately 70–80 minutes to roughly 15–20 minutes. The reductions are statistically significant in all cases ($p=0.0^{***}$) with extremely large test statistics and narrow confidence intervals for the effect size, confirming that the improvements are consistent across heterogeneous demand environments. Importantly, the effect is uniform in direction: no borough experiences a deterioration, and congestion is not shifted geographically. Instead, the optimized configuration improves efficiency system-wide.

A similar pattern emerges when results are examined by operational profile (see Appendix~\ref{sec:case_study}). Under the current configuration, median response times remain high across profiles. Under the optimized design, medians fall consistently to the range of 10–20 minutes, with controlled variability. The largest relative gains occur in stringent and tail-focused profiles (\textsc{HARD}, \textsc{TAIL+}, \textsc{TIGHT+}), where the current system incurs substantial efficiency penalties. Even in robustness-oriented profiles (\textsc{REL}, \textsc{COV}), the optimized design achieves clear reductions in both central tendency and dispersion.

Overall, the price of efficiency is uniformly favorable. The optimized design dominates the current configuration globally, spatially, and across design regimes, delivering large, consistent, and statistically robust reductions in total response time. From a managerial perspective, this translates into a faster, more stable, and more reliable service system with significantly lower exposure to extreme delays.

\item[-] {\bf The price of congestion control (impact on system load and stability). } Congestion is a primary source of instability and extreme delays in urban service systems. The \emph{price of congestion control} captures the trade-off between reducing system load and the potential cost or efficiency adjustments required to achieve stability.

Under the historically estimated configuration, the system operates persistently close to saturation, leaving little buffer to absorb demand fluctuations. In contrast, the optimal design shifts the distribution of congestion levels downward, introducing structural slack and reducing the likelihood of near-instability regimes. This represents a transition from a brittle operating point to a more resilient and stable configuration.

Figure~\ref{fig:barplots} reports congestion and cost deviations by borough. In the left panel, congestion level reductions are observed uniformly across all boroughs, indicating that improvements are systemic rather than spatially reallocated. The reductions are statistically significant and accompanied by controlled variability, confirming that stability gains are achieved without transferring load between regions. The right panel shows the associated cost deviations, quantifying the operational adjustment required to secure these congestion reductions.

 \begin{figure}
 \centering
 {\includegraphics[width=0.5\textwidth]{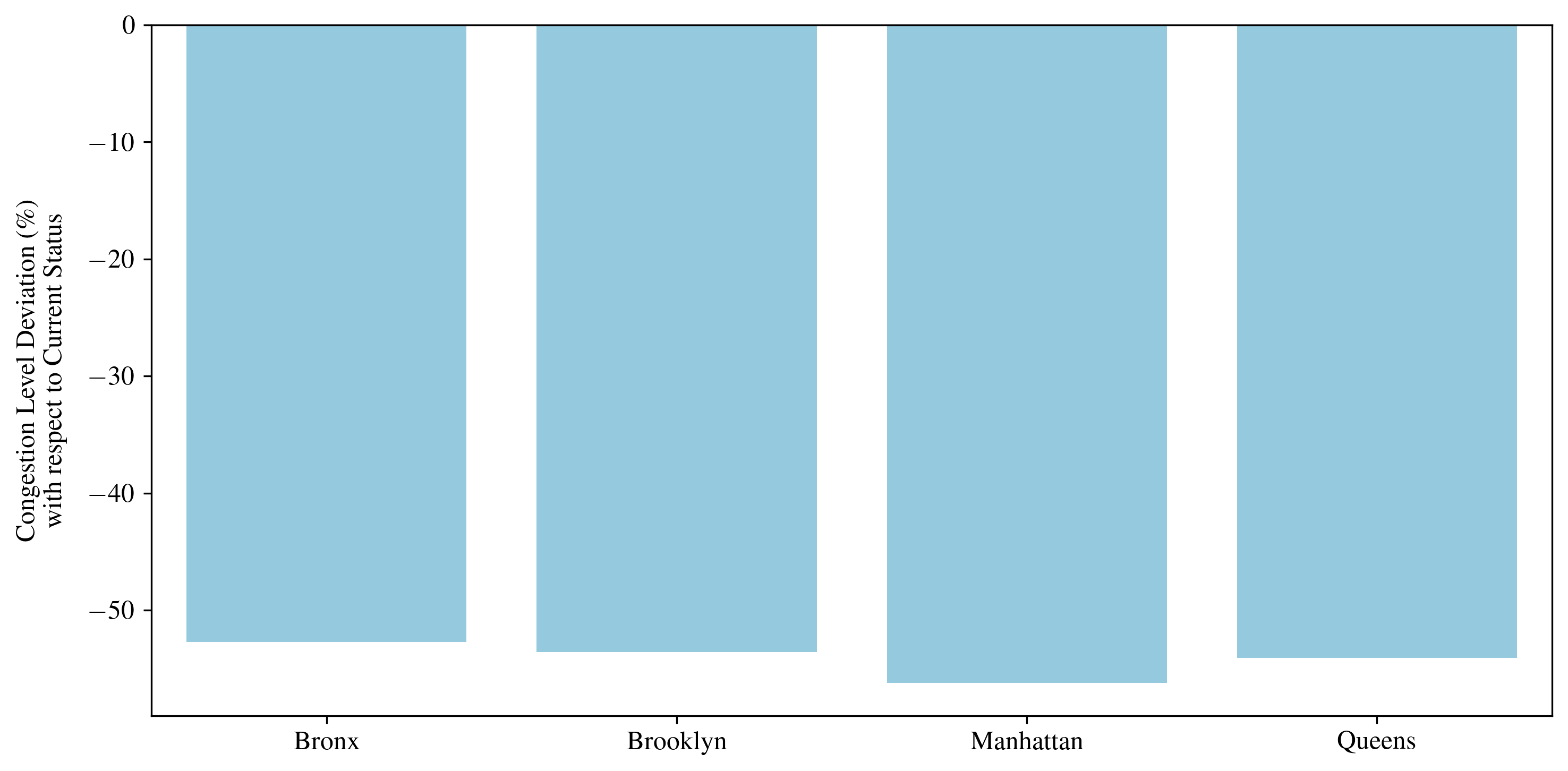}~
\includegraphics[width=0.5\textwidth]{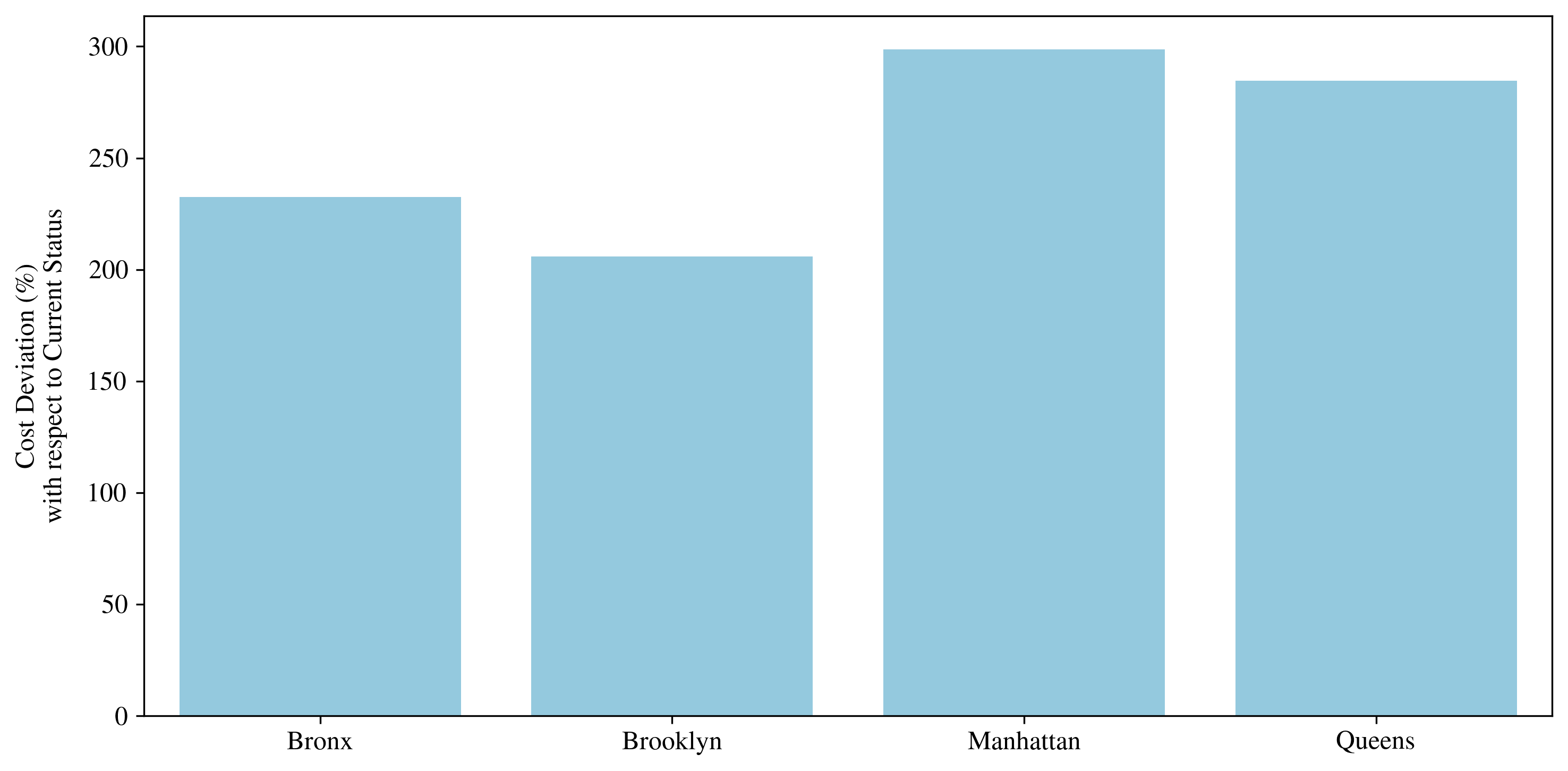}}
{Congestion level (left) and cost deviations (right), in \%, relative to the current system by borough. Negative values indicate reduced congestion under the optimal design.\label{fig:barplots}}
{}
\end{figure}

A similar pattern emerges across operational profiles (see Appendix~\ref{sec:case_study}). Profiles imposing stringent guarantees or tail protection exhibit the highest congestion under the current system, reflecting the inefficiency of enforcing strict targets without coordinated optimization. Under the optimal design, congestion levels decrease markedly across all profiles, often moving the system away from the saturation boundary. Even balanced profiles experience systematic improvements.

Overall, the cost of congestion control is favorable. The optimized design achieves substantial and statistically significant reductions in system-wide congestion, with reductions exceeding $50\%$ in every borough and ranging from $40\%$ to nearly $80\%$ across the profiles. It enhances system stability without inducing spatial imbalances. These results demonstrate that congestion control can be embedded directly into system design, yielding a structurally more robust and resilient operation than the current configuration.

\item[-] {\bf The price of fairness (protection of least-favored users). } Fairness in service systems concerns the protection of users most exposed to extreme delays. The \emph{price of fairness} measures the performance trade-offs required to reduce tail risk, typically assessed through risk-sensitive metrics such as the conditional value-at-risk (CVaR). In this context, fairness does not imply equalizing averages, but limiting exposure to severe service outcomes.

Figure~\ref{fig:barplot_cvar_borough} reports borough-level changes in CVaR relative to the current system. Negative values indicate reductions in extreme delays under the optimal design. Substantial and systematic CVaR reductions are observed across all boroughs, with particularly pronounced improvements in high-demand regions such as Brooklyn, Manhattan, and Queens. These results indicate that tail-risk mitigation is structural and not confined to isolated scenarios.

Figure~\ref{fig:barplot_cvar_profile} provides the same analysis by operational profile. Under the current configuration, profiles imposing strict guarantees (\textsc{HARD}) exhibit substantial tail exposure, highlighting the inefficiency of enforcing fairness objectives without coordinated optimization. The optimal design markedly reduces CVaR across nearly all profiles, including tail-focused configurations (\textsc{TAIL+}), without transferring risk to other regimes.

The optimized design delivers large and consistent reductions in tail risk across both spatial and operational dimensions. To formally assess the differences in CVaR values reported in Figure~\ref{fig:violinplots} (right), we apply the Wilcoxon signed-rank test. The test yields a $p$-value below conventional significance levels ($p<0.001$) and a test statistic of $w=0$, indicating a systematic reduction in tail risk under the optimized design. The estimated median reduction is approximately one hour.  These results support that the proposed design not only cuts the time for the average user but also narrows the gap experienced by the least-favored users, preserving homogeneity and yielding a service system that meets efficiency at the time that promotes robustness and fairness. These findings demonstrate that such objectives can be embedded directly into system design through explicit risk-sensitive optimization, yielding measurable protection for least-favored users without degrading overall performance.

\item[-] {\bf The price of robustness (cost of the proposed service system design). } Robustness refers to the ability of a service system to sustain acceptable performance under demand variability and adverse operating conditions. The \emph{price of robustness} measures the additional cost required to transition from the current estimated configuration to a design that delivers improved stability, lower congestion, and reduced tail risk. Since implementation decisions ultimately depend on budget considerations, quantifying this cost is essential for assessing the practical viability of the proposed framework.

Figure~\ref{fig:barplots} (right panel) reports borough-level cost deviations, expressed as percentages relative to the current system. Cost increases are not uniform across boroughs: high-demand areas require larger adjustments, reflecting the need for additional capacity to prevent persistent saturation. However, all increases remain within the same order of magnitude, indicating that robustness is achieved through coordinated system-level redesign rather than localized overprovisioning.

A similar pattern emerges across operational profiles (see Appendix~\ref{sec:case_study}). Profiles emphasizing strict guarantees or tail protection entail higher cost increments, as expected, since reducing extreme delays requires structural slack. In contrast, balanced or efficiency-oriented profiles exhibit more moderate cost adjustments. 

The congestion- and tail-aware system architecture leads to an increase of up to $300\%$ in every borough and ranges from $150\%$ to over $600\%$ across the different profiles. Nevertheless, such deviation might be readily tailored when decision-making is driven by the effective cost considerations. Note that suitable smaller values of parameter $\kappa$ make the loss function~\eqref{eq:obj_tradeoff} prioritizes the cost, being able to reach the most conservative service capacity while still keep the quality requirements when $\kappa$ equals zero. On the contrary, larger values of $\kappa$ prevail the emphasis on reducing congestion levels. The motivations behind the decision-making process and the state of nature are captured within this flexible modeling framework, enabling the system to achieve proper equilibrium. The evidence shows that substantial gains in efficiency, congestion control, and fairness can be achieved at a measurable and economically interpretable cost, supporting informed adoption in real urban service systems.
\end{itemize}

\section{Conclusion}\label{sec:conclusions} 

We develop an optimization-based framework for the design of multi-regime parallel service queueing systems under congestion and risk considerations. The core contribution is a flexible mixed-integer exponential conic formulation that integrates (i) service-level agreement (SLA) chance constraints protecting a prescribed fraction of users, (ii) structural design constraints modeled through conflict graphs, and (iii) explicit tail-risk control under the CVaR paradigm. 

Although the resulting problem is NP-hard, we design an efficient decomposition approach that substantially alleviates the computational burden in large-scale instances. Moreover, we show that under suitable structural simplifications, a constructive polynomial-time procedure can be derived, thereby identifying tractable regimes of the model and clarifying the boundary between complexity and structure.

From a practical standpoint, the framework provides a unified decision-support tool for service system design. The case study demonstrates that the optimized design delivers substantial and systematic improvements in efficiency, congestion control, fairness, and robustness. In particular, the ``prices of" efficiency, congestion control, fairness, and robustness are shown to be measurable and transparent, allowing decision makers to explicitly quantify trade-offs between response time reduction, system stability, tail-risk mitigation, and budgetary adjustments. 

Overall, the proposed methodology bridges modern conic optimization modeling with actionable service system design, offering both theoretical insight and operational value for large-scale urban service applications.

A natural avenue for future research is the extension of the framework to multi-facility service systems in which assignment decisions endogenously determine arrival rates. In such settings, allocation policies would interact with congestion dynamics, leading to joint design and routing models with coupled arrival and service mechanisms.

 \section*{Acknowledgements}

The authors acknowledge financial support by  grants PID2020-114594GB-C21, 
PID2022-139219OB-I00, PID2024-156594NB-C21, and 
RED2022-134149-T  (Thematic Network on Location Science and Related Problems) funded by MICIU/AEI/10.13039/501100011033; FEDER+Junta de Andalucía projects C‐EXP‐139‐UGR23, and AT 21\_00032; SOL2024-31596
and SOL2024-31708 funded by US;  the IMAG-Maria de Maeztu grant CEX2020-001105-M / AEI / 10.13039 / 501100011033; and the IMUS--Maria de Maeztu grant CEX2024-001517-M. 

\appendix

\section{SSD(C/TR)-ECP Full Model}\label{ap:fullmodel}

\setlength{\abovedisplayskip}{6pt}
\setlength{\belowdisplayskip}{6pt}
\setlength{\abovedisplayshortskip}{4pt}
\setlength{\belowdisplayshortskip}{4pt}

\begin{align}\tag*{\rm SSD(C/TR)-ECP}\label{SSD(C/TR)-ECP}
\min \quad
& c^{t}\mu - \kappa \sum_{r\in\mathcal{R}} \nu_r
\nonumber\\
\text{s.t.}\quad
& \mu_r - \Lambda_r \ge \varepsilon, &\forall r\in\mathcal{R},
\label{ctr:1}\\
& \sum_{r\in\RR} \pi_r \zeta_{ar}
\le 1 + (\alpha_a-1)s_a, & \forall a\in\mathcal{A},\nonumber
\\
& u_{ar} = -(\mu_r-\Lambda_r)(t_a^\star-t_a), & \forall a\in\mathcal{A}, r \in \RR,\label{ctr:u}\\
&(\zeta_{ar},\, s_a,\,
u_{ar})
\in \mathcal{K}_{\exp},& \forall a\in\mathcal{A},\ \forall r\in\mathcal{R},\nonumber\\
&\sum_{a\in\mathcal{A}} s_a \ge \lceil \beta |\mathcal{A}| \rceil,\nonumber\\
& s_a + s_{a'} \;\le\; 1, &\forall \{a,a'\}\in\mathcal{E},
\nonumber\\
& r_a \ge t_a + \sum_{r\in\mathcal{R}} \pi_r \tau_r, & \forall a\in\mathcal{A},
\label{ctr:6}\\
& U_a \ge r_a - \eta, &\forall a\in\mathcal{A},
\label{ctr:7}\\
& \eta + \frac{\sum_{a\in\mathcal{A}} U_a}{\lfloor (1-\gamma)|\A|\rfloor} \le \Gamma,
\label{ctr:8}\\
& \nu_r + \ell_r \ge 0, & \forall r\in\mathcal{R},
\label{ctr:9}\\
& (\tau_r,\,1,\,\ell_r) \in \mathcal{K}_{\exp},
&\forall r\in\mathcal{R},\label{ctr:9b}\\
& (\mu_r-\Lambda_r,\,1,\,\nu_r)
\in \mathcal{K}_{\exp}, & \forall r\in\mathcal{R},\label{ctr:10}\\
& \mu_r, \nu_r, \ell_r, \tau_r \geq 0, & \forall r \in \RR,\nonumber\\
& U_a, r_a \geq 0, & \forall a \in \A,\nonumber\\
& \eta \geq 0, \nonumber\\
& s_a \in \{0,1\}, & \forall a\in\mathcal{A}.\nonumber
\end{align}
\setlength{\abovedisplayskip}{10pt}
\setlength{\belowdisplayskip}{10pt}

\section{Benders Decomposition for SSD(C/TR)-ECP}
\label{app:benders}

This appendix provides the detailed derivation of the Benders decomposition
used to solve problem~\ref{SSD(C/TR)-ECP}.

Fixing $s=\bar s\in\{0,1\}^{\mathcal A}$, the continuous subproblem defining
$\Theta(\bar s)$ is given by

\setlength{\abovedisplayskip}{6pt}
\setlength{\belowdisplayskip}{6pt}
\setlength{\abovedisplayshortskip}{4pt}
\setlength{\belowdisplayshortskip}{4pt}

\begin{align}
\min \quad
& c^{t}\mu - \kappa \sum_{r\in\mathcal{R}} \nu_r
\nonumber\\
\text{s.t.}\quad
& \eqref{ctr:1}, \eqref{ctr:u}, \eqref{ctr:6}-\eqref{ctr:10},\nonumber\\
& \sum_{r\in\RR} \pi_r \zeta_{ar} \le 1 + (\alpha_a-1)\bar s_a, & \forall a\in\mathcal{A}, \label{linear_s}\\
&(\zeta_{ar},\, \bar s_a,\,
u_{ar})
\in \mathcal{K}_{\exp},& \forall a\in\mathcal{A},\ \forall r\in\mathcal{R},\label{exp_s}\\
& \bar{s}_a + \bar{s}_{a'} \;\le\; 1, & \forall \{a,a'\}\in\mathcal{E},
\label{conflict_benders}\\
& \mu_r, \nu_r, \ell_r, \tau_r \geq 0, & \forall r \in \RR,\nonumber\\
& U_a, r_a \geq 0, & \forall a \in \A,\nonumber\\
& \eta \geq 0, \nonumber
\end{align}
\setlength{\abovedisplayskip}{10pt}
\setlength{\belowdisplayskip}{10pt}
Note that all constraints in the problem above  are convex. Thus, assuming that the subproblem is feasible, and since the problem clearly verifies strong duality (e.g., by Slater's condition), let $f_a, g_{\{a,a'\}}\ge 0$ denote the dual multipliers associated with
\eqref{linear_s} and \eqref{conflict_benders}, respectively, and let
$$
h_{ar}=(h^{(1)}_{ar},h^{(2)}_{ar},h^{(3)}_{ar})\in\mathcal K_{\exp}^\ast
$$
be the dual vector associated with \eqref{exp_s}.
The second component $h^{(2)}_{ar}$ corresponds to the coordinate in which
$\bar s_a$ appears. Collecting the terms of the Lagrangian that depend on $\bar s$, we obtain
$$
\sum_{a\in\mathcal A} (1-\alpha_a)f_a \bar s_a
+
\sum_{\{a,a'\}\in\mathcal{E}} g_{\{a,a'\}}(\bar{s}_a + \bar{s}_{a'})
+
\sum_{a\in\mathcal A}\sum_{r\in\mathcal R} h^{(2)}_{ar}\bar s_a.
$$
Therefore, a subgradient of $\Theta(\cdot)$ at $\bar s$ is given componentwise by
\begin{equation*}
q_a(\bar s)
=
(1-\alpha_a)f_a
+
\sum_{e\in\mathcal{E}:\, a\in e}g_e
+
\sum_{r\in\mathcal R} h^{(2)}_{ar},
\label{eq:subgradient}
\end{equation*}
for all $a\in\mathcal A.$ By convexity of $\Theta(\cdot)$, the supporting hyperplane inequality yields, 
$$
\Theta(s)\ge
\Theta(\bar s)
+
\sum_{a\in\mathcal A} q_a(\bar s)(s_a-\bar s_a),
\,\forall s\in[0,1]^{\mathcal A}.
$$
Introducing the master variable $\theta$, this inequality gives rise to the
Benders optimality cut
\begin{equation*}
\theta \ge
\Theta(\bar s)
+
\sum_{a\in\mathcal A} q_a(\bar s)(s_a-\bar s_a).
\label{eq:benders_cut}
\end{equation*}

If the subproblem is infeasible for $\bar s$, a feasibility cut excluding this
solution is added
\begin{equation*}
\sum_{a\in\mathcal A:\bar s_a=1} s_a \le
|\{a \in \A: \bar s_a=1\}| - 1.
\label{eq:feas_cut}
\end{equation*}

\section{Computational Performance. Additional Plots}
\label{app:pp_borough}

 \begin{figure}[h!]
 {\includegraphics[width=0.5\textwidth]{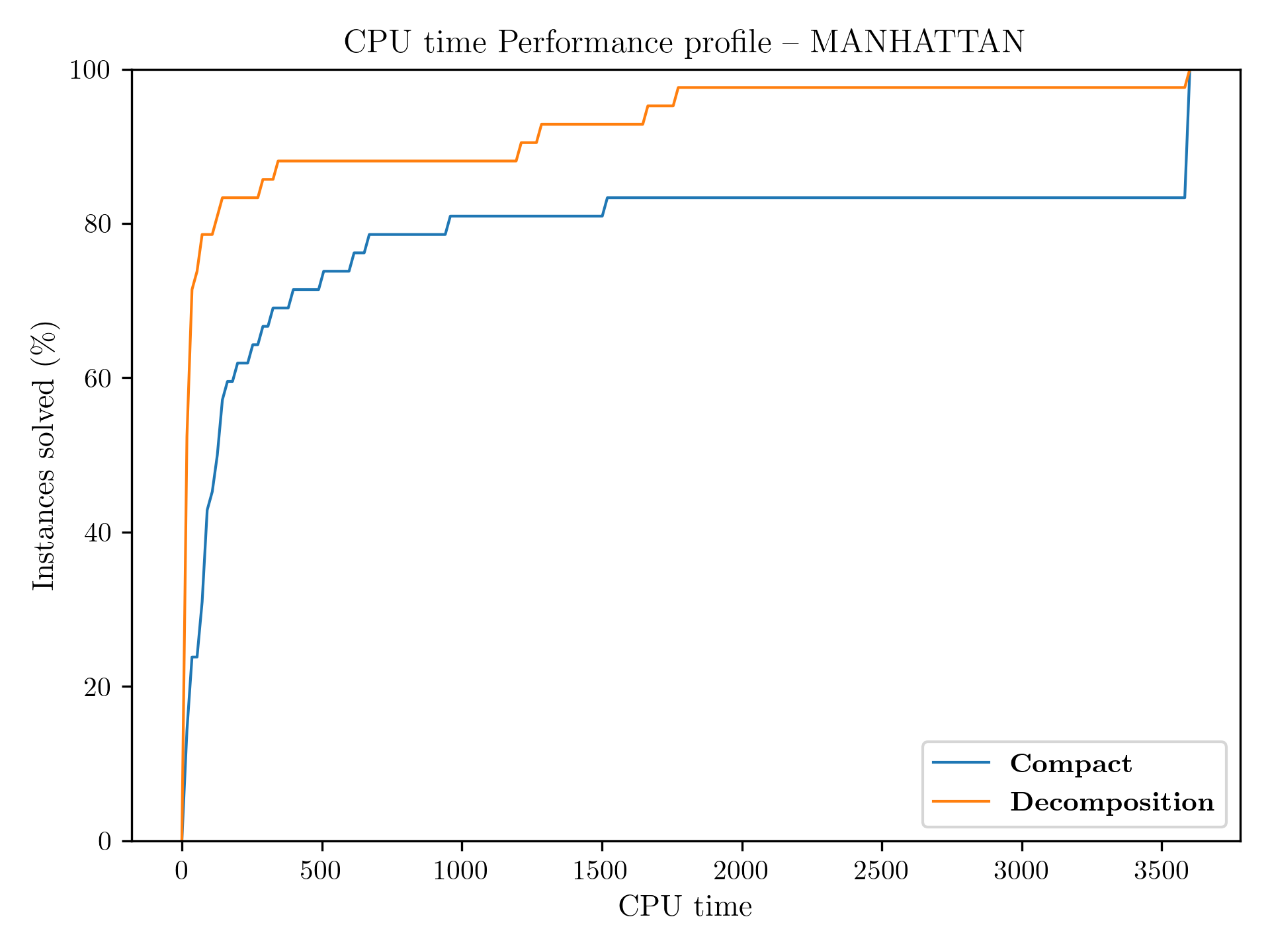}~\includegraphics[width=0.5\textwidth]{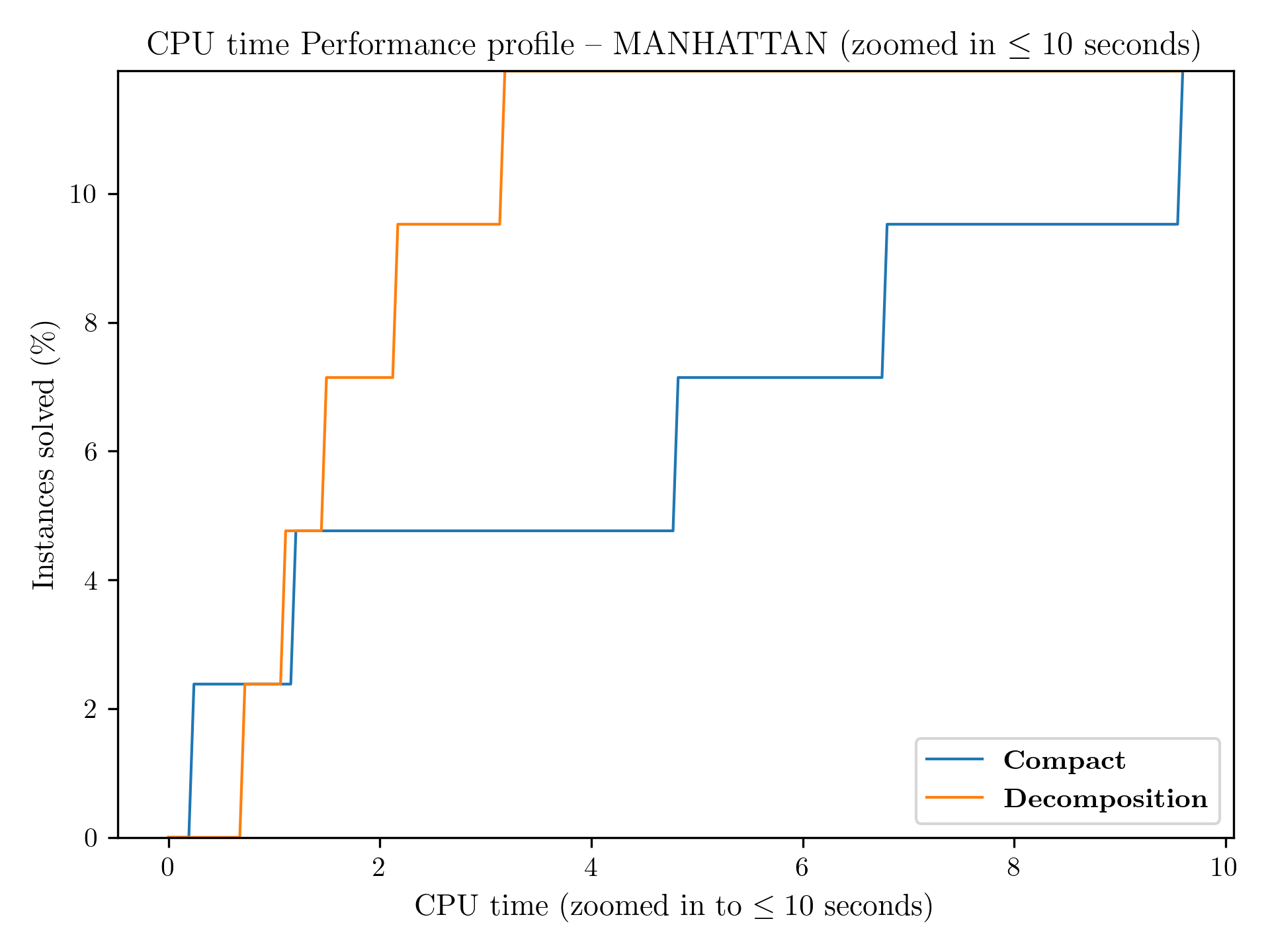}}
{Performance Profile of CPU Time for Manhattan in our experiments (left) and zoomed to those solved in less than $10$ seconds (right).\label{fig:pp_manhattan}}
{}
\end{figure}

 \begin{figure}[h!]
 {\includegraphics[width=0.5\textwidth]{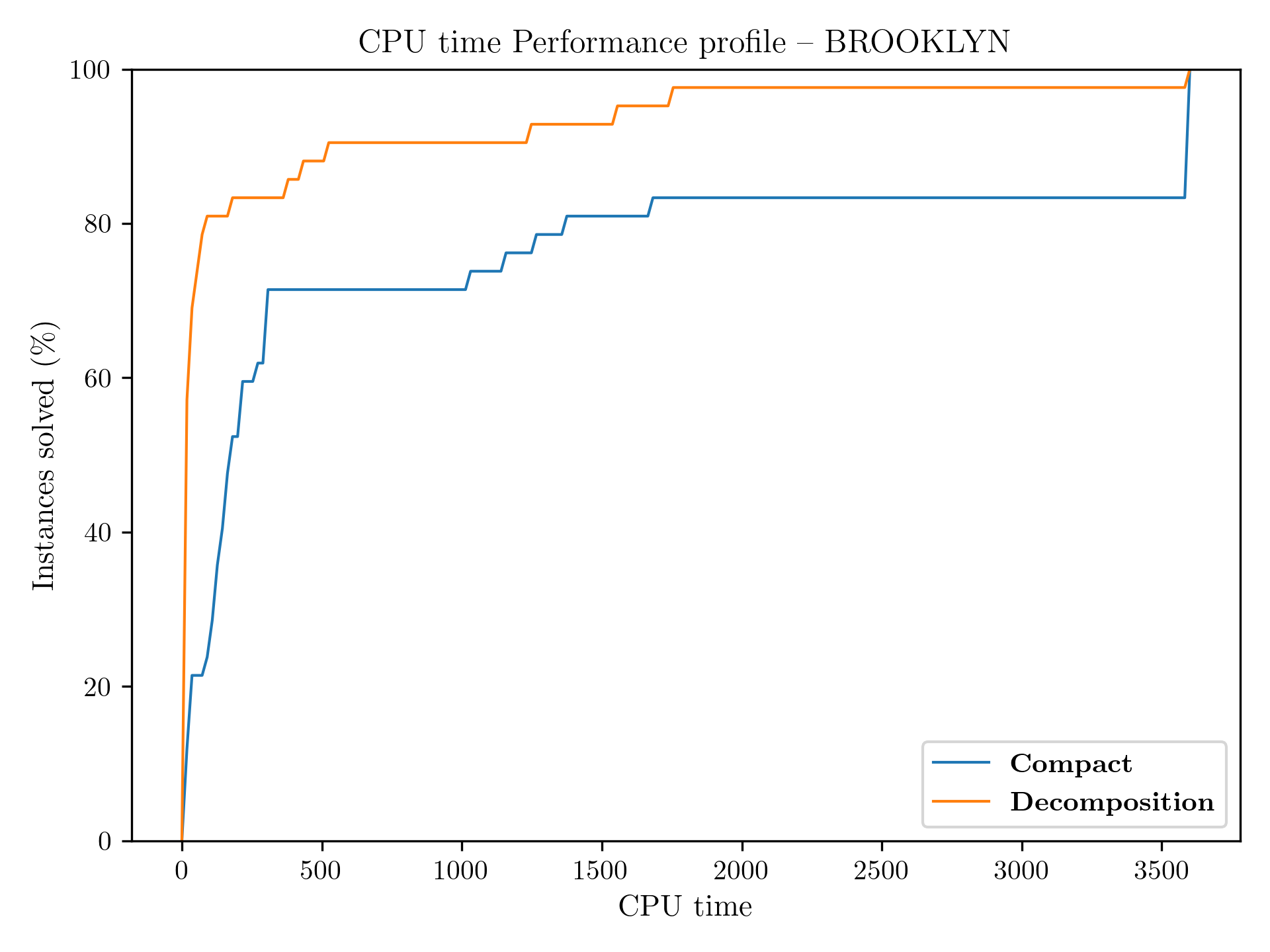}~\includegraphics[width=0.5\textwidth]{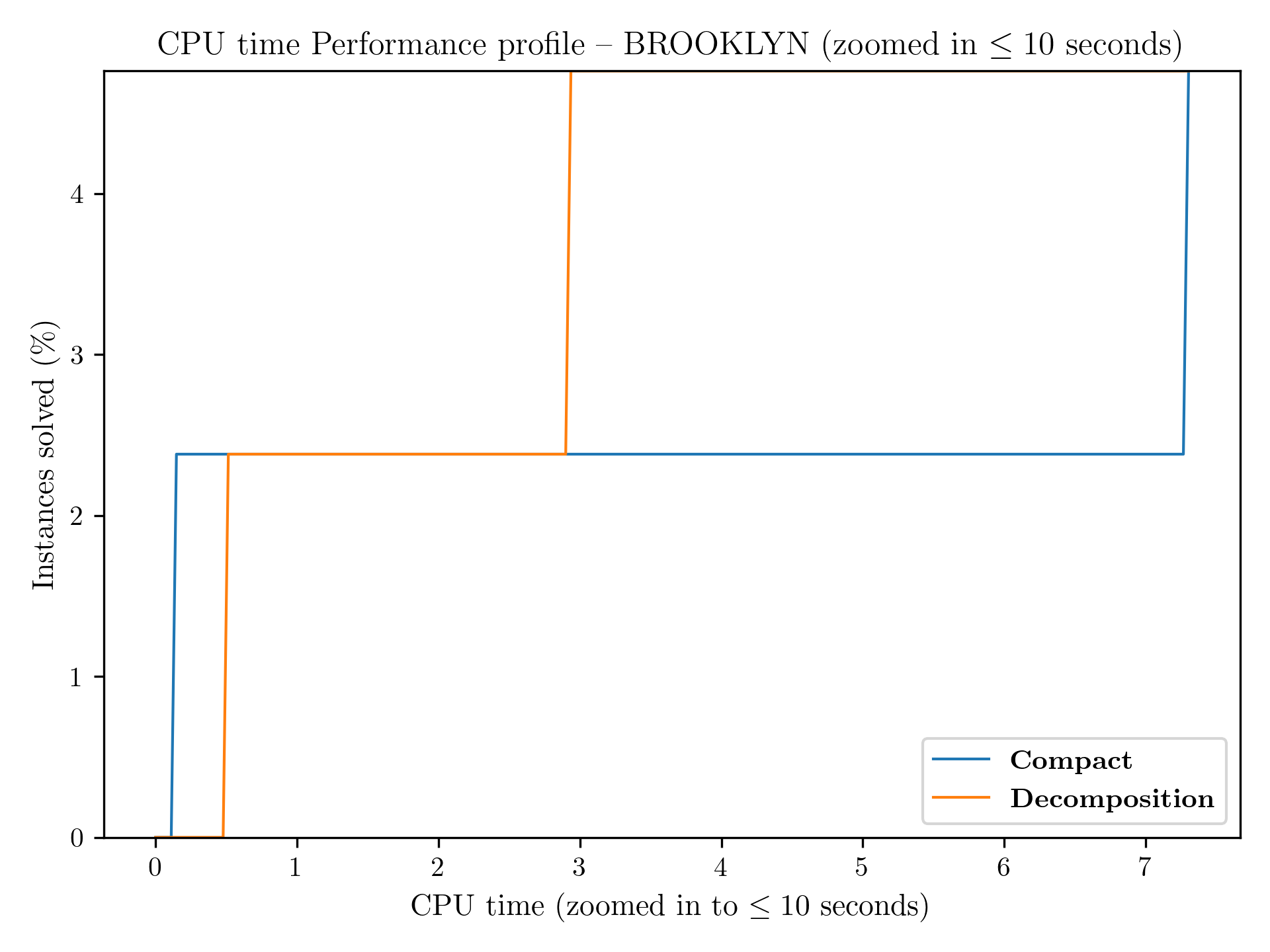}}
{Performance Profile of CPU Time for Brooklyn in our experiments (left) and zoomed to those solved in less than $10$ seconds (right).\label{fig:pp_brooklyn}}
{}
\end{figure}

 \begin{figure}
 {\includegraphics[width=0.5\textwidth]{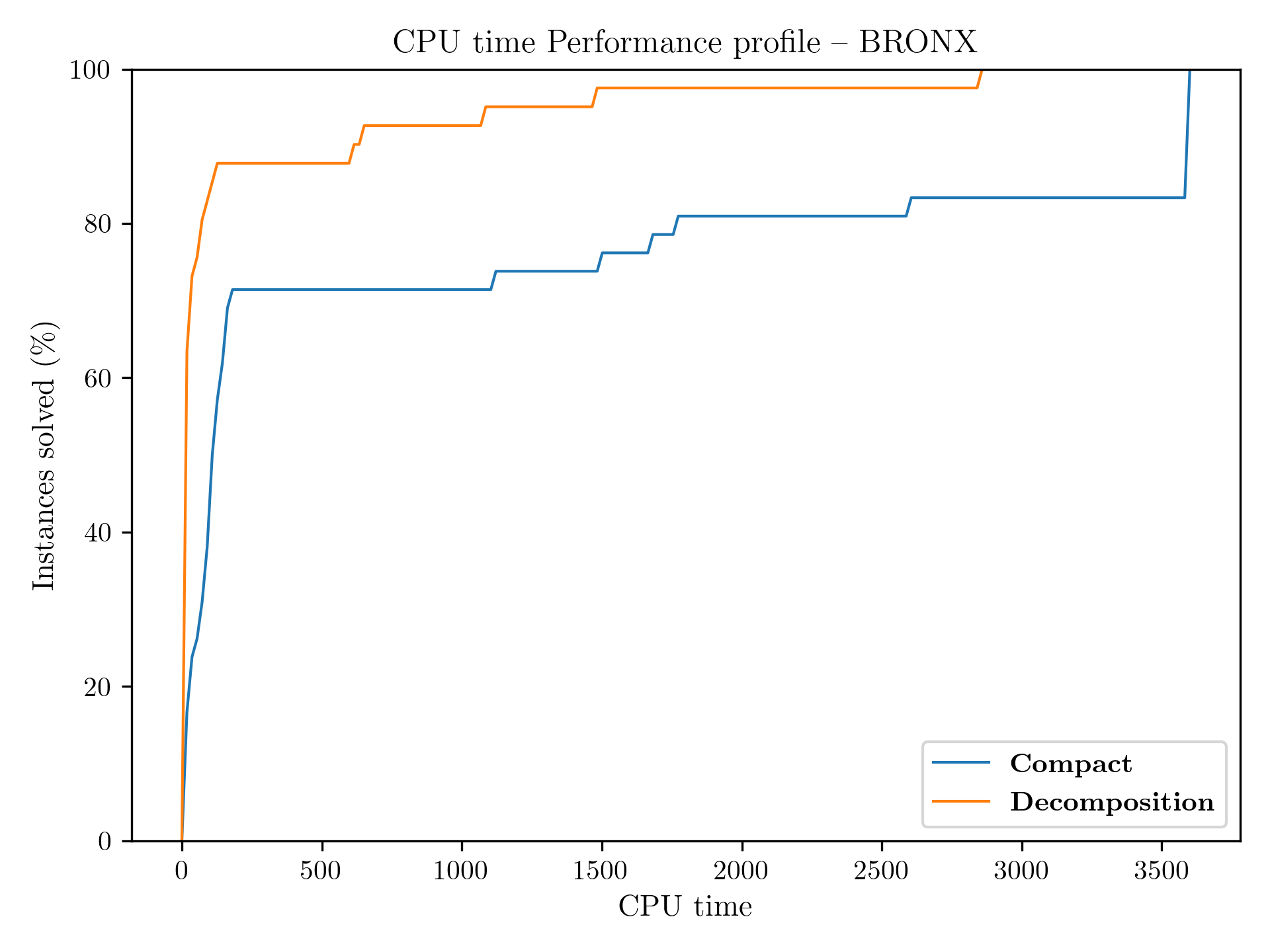}~\includegraphics[width=0.5\textwidth]{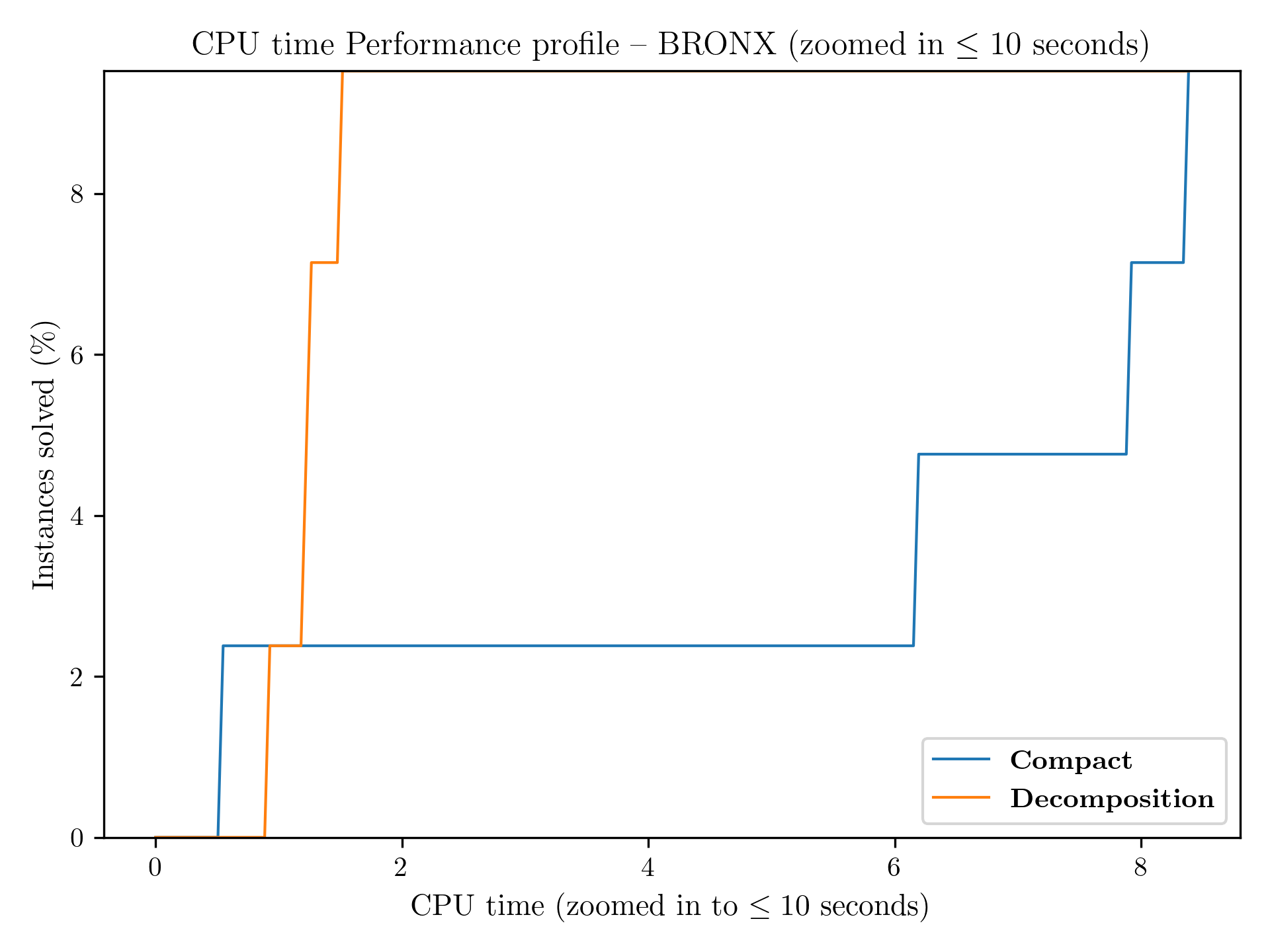}}
{Performance Profile of CPU Time for Bronx in our experiments (left) and zoomed to those solved in less than $10$ seconds (right).\label{fig:pp_bronx}}
{}
\end{figure}

 \begin{figure}
 {\includegraphics[width=0.5\textwidth]{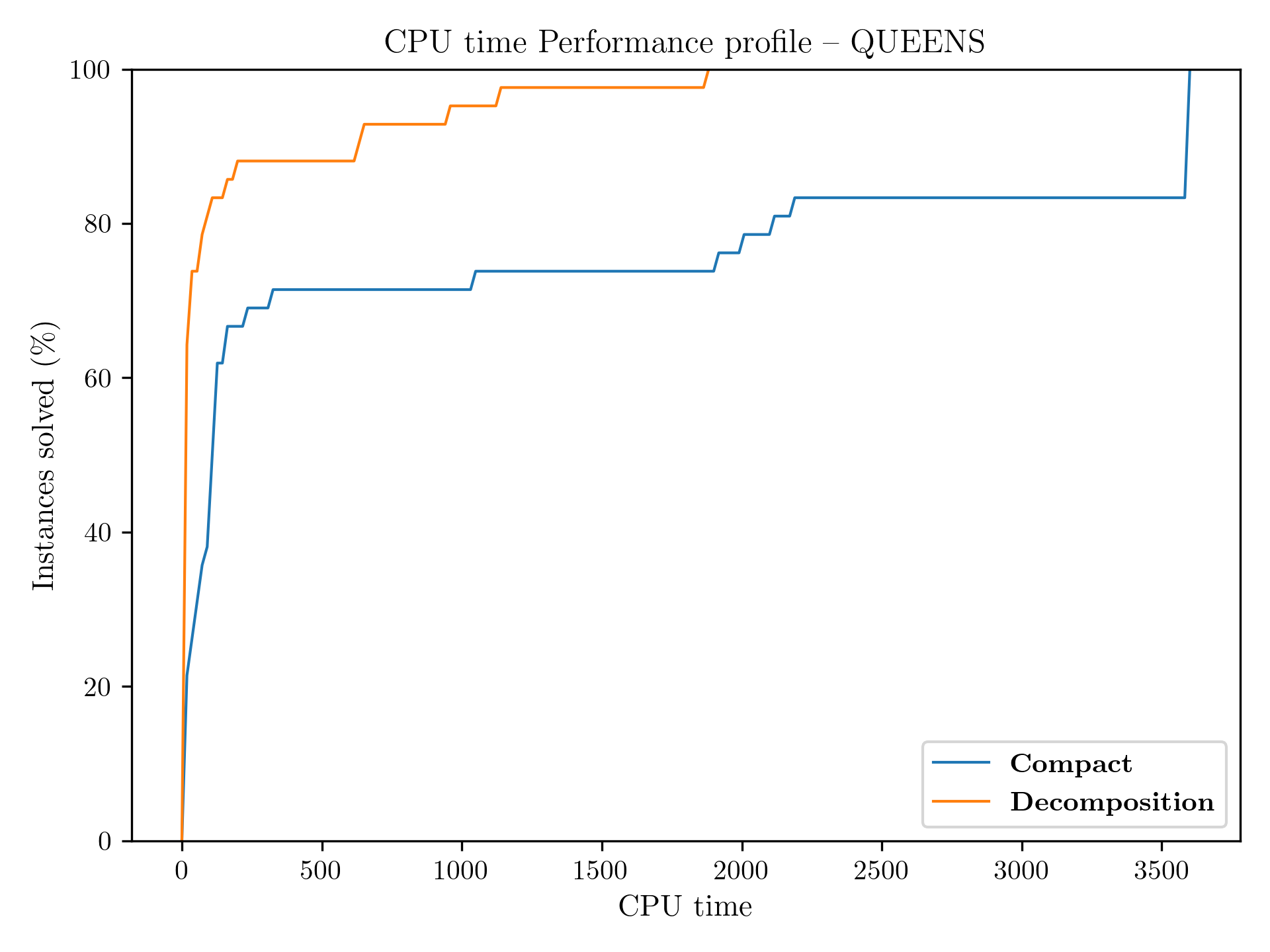}~\includegraphics[width=0.5\textwidth]{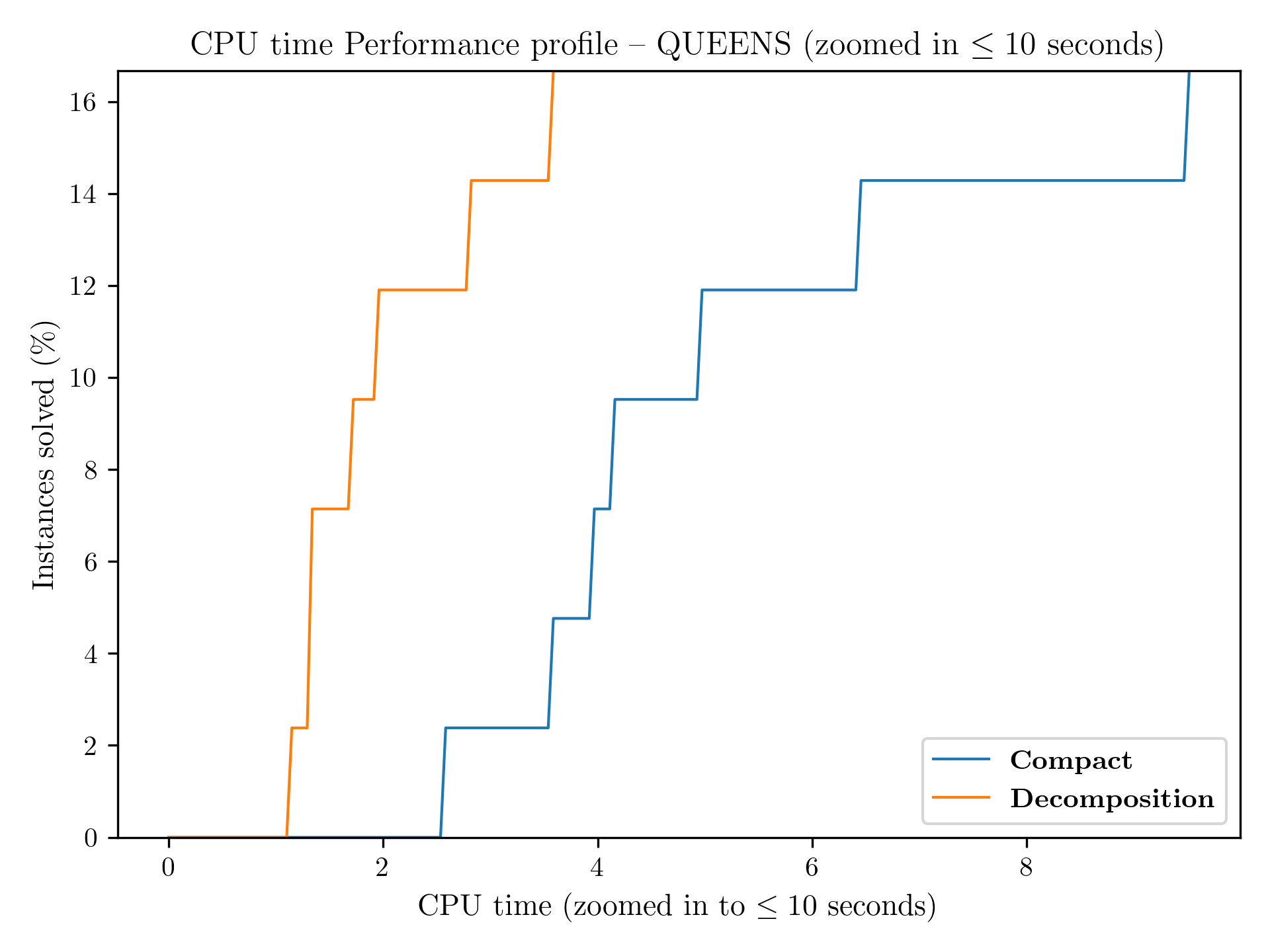}}
{Performance Profile of CPU Time for Queens in our experiments (left) and zoomed to those solved in less than $10$ seconds (right).\label{fig:pp_queens}}
{}
\end{figure}

 \begin{figure}
 {\includegraphics[width=0.75\textwidth]{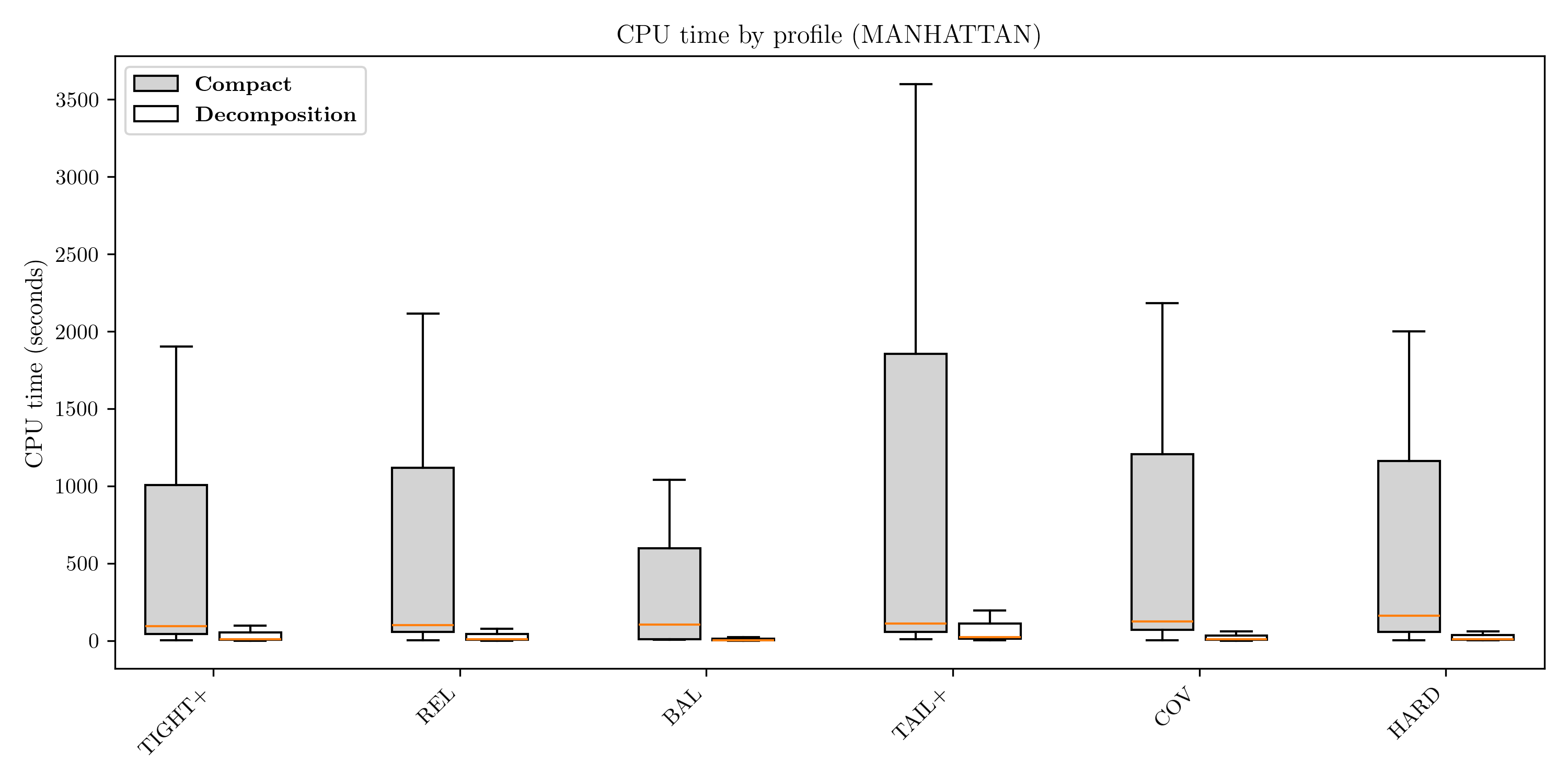}}
{Boxplots of CPU time by profile for Manhattan. For each profile, the distribution of CPU times is shown for the \textbf{compact} and \textbf{decomposition} approaches.\label{fig:bp_manhattan}}
{}
\end{figure}

 \begin{figure}
 {\includegraphics[width=0.75\textwidth]{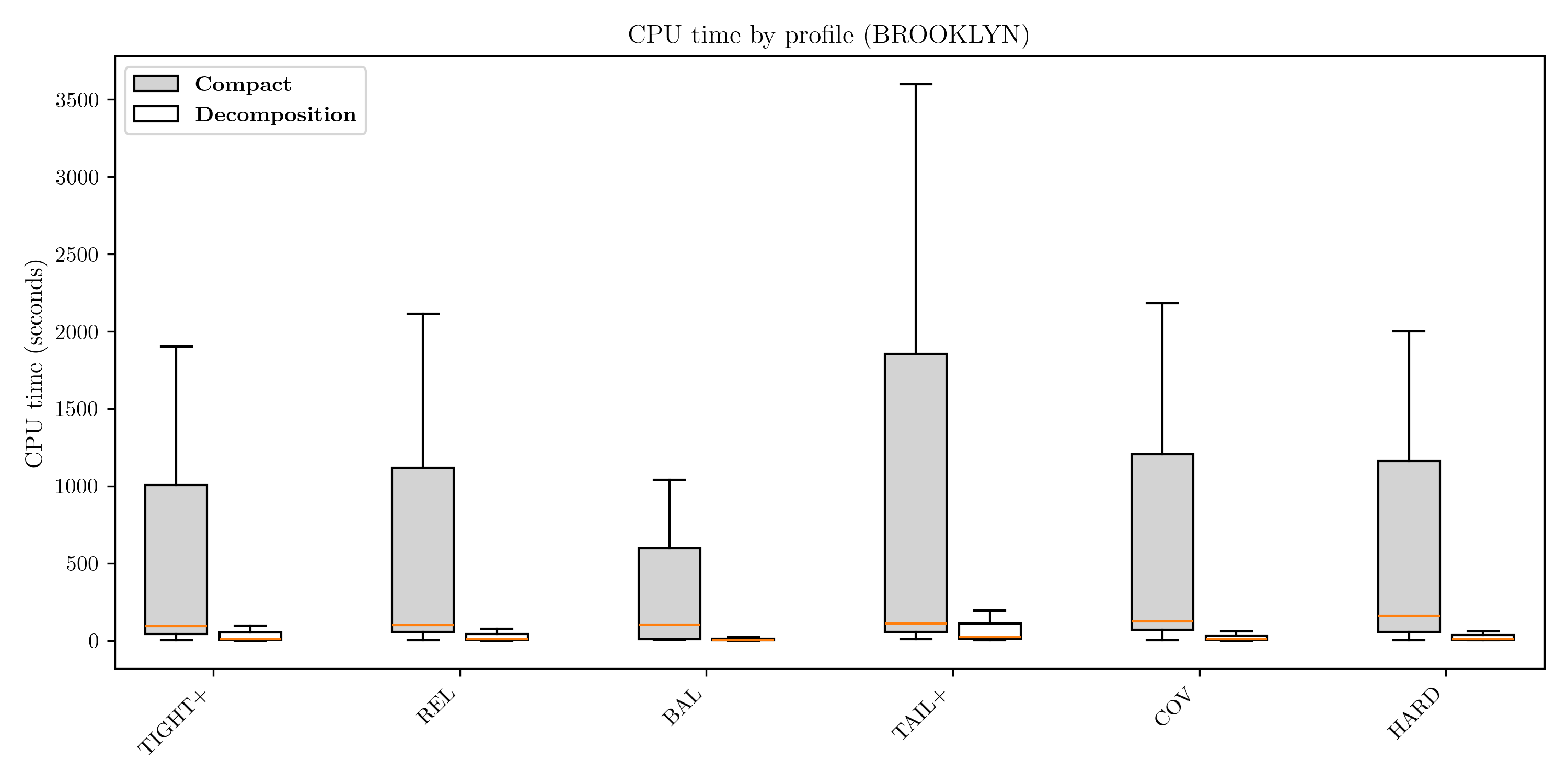}}
{Boxplots of CPU time by profile for Brooklyn. For each profile, the distribution of CPU times is shown for the \textbf{compact} and \textbf{decomposition} approaches.\label{fig:bp_brooklyn}}
{}
\end{figure}

 \begin{figure}
 {\includegraphics[width=0.75\textwidth]{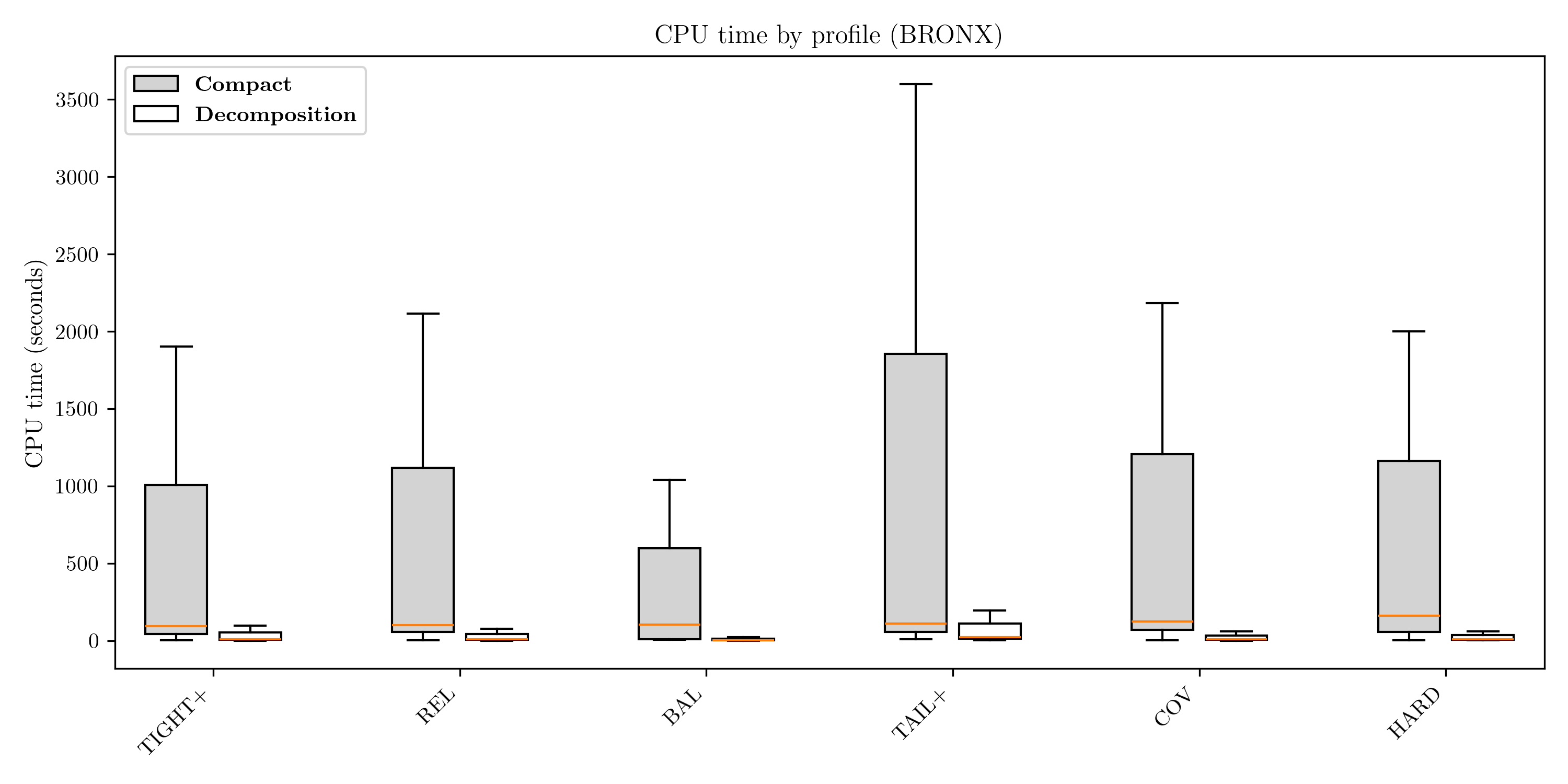}}
{Boxplots of CPU time by profile for Bronx. For each profile, the distribution of CPU times is shown for the \textbf{compact} and \textbf{decomposition} approaches.\label{fig:bp_bronx}}
{}
\end{figure}

 \begin{figure}
 {\includegraphics[width=0.75\textwidth]{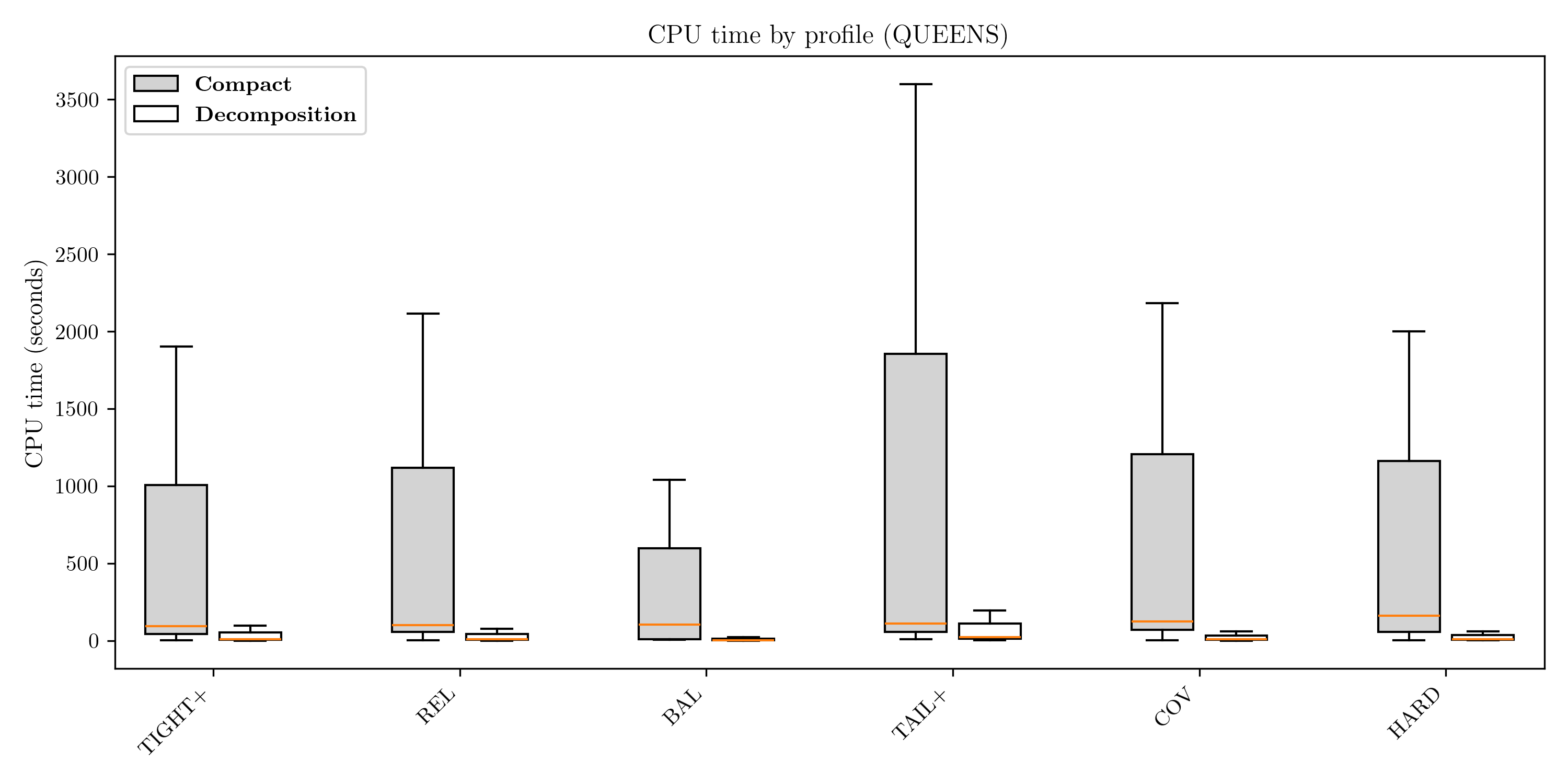}}
{Boxplots of CPU time by profile for Queens. For each profile, the distribution of CPU times is shown for the \textbf{compact} and \textbf{decomposition} approaches.\label{fig:bp_queens}}
{}
\end{figure}

\section{Case Study. Additional Plots}
\label{sec:case_study}

 \begin{figure}
 {\includegraphics[width=.75\textwidth]{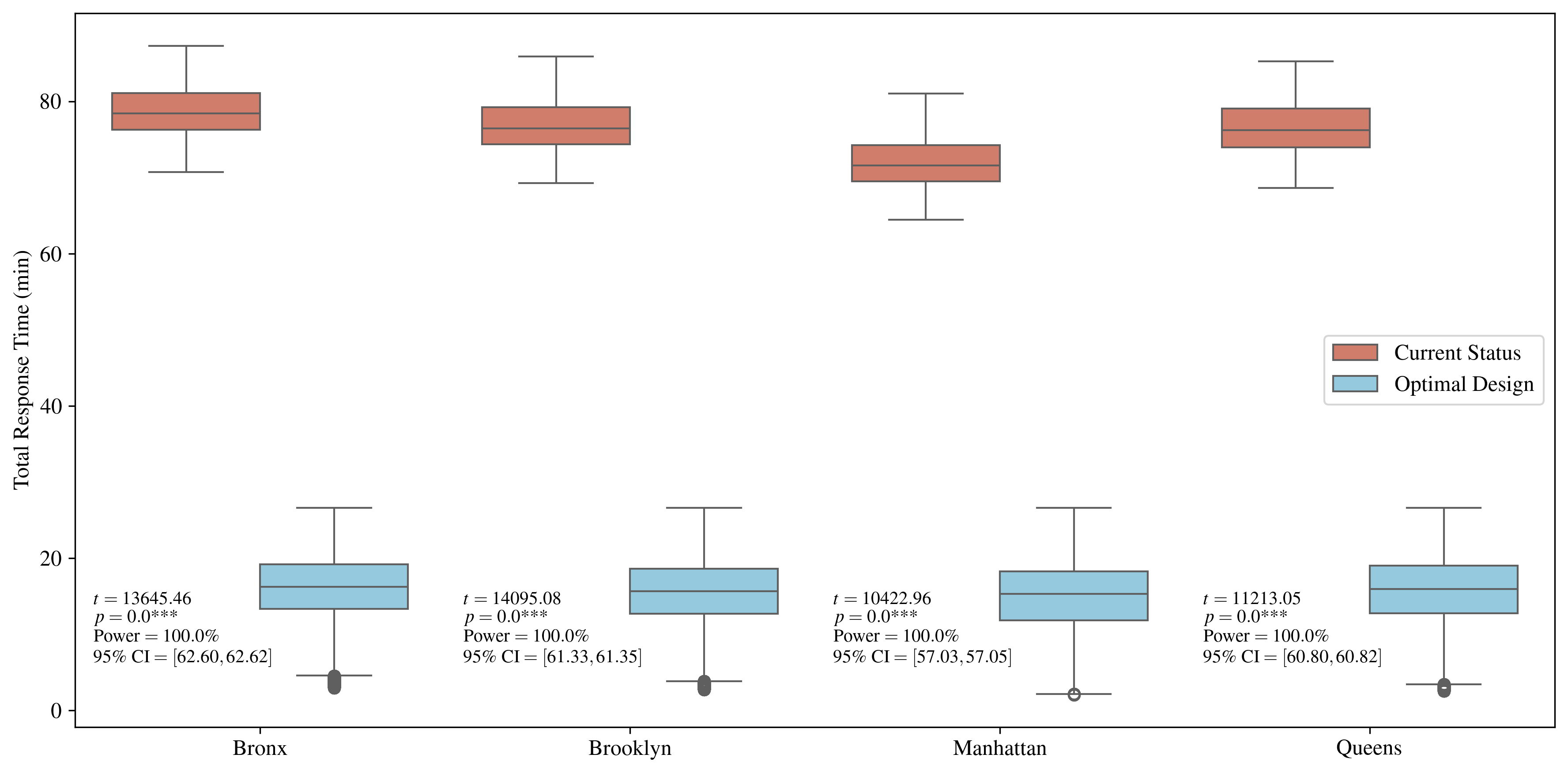}}
{Boxplots of obtained response times with the estimated rates and those obtained with our models for each borough.\label{fig:boxplot_response_time_borough}}
{}
\end{figure}

 \begin{figure}
 {\includegraphics[width=.75\textwidth]{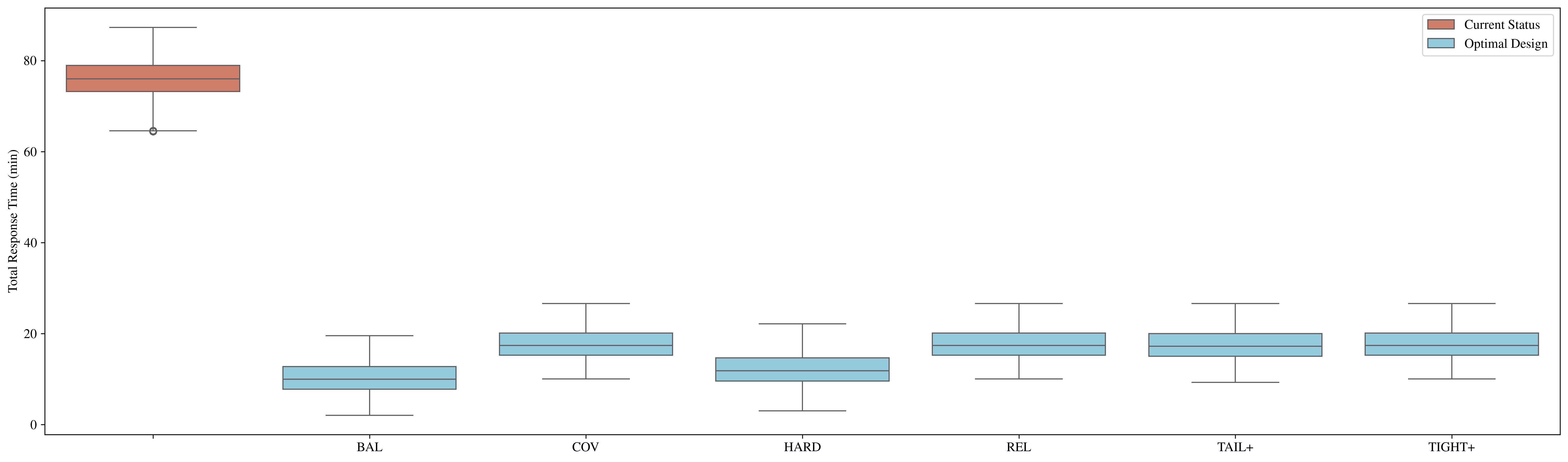}}
{Boxplots of obtained response times with the estimated rates and those obtained with our models for each parameters profile.\label{fig:efficiency_profiles}}
{}
\end{figure}

 \begin{figure}
 {\includegraphics[width=0.75\textwidth]{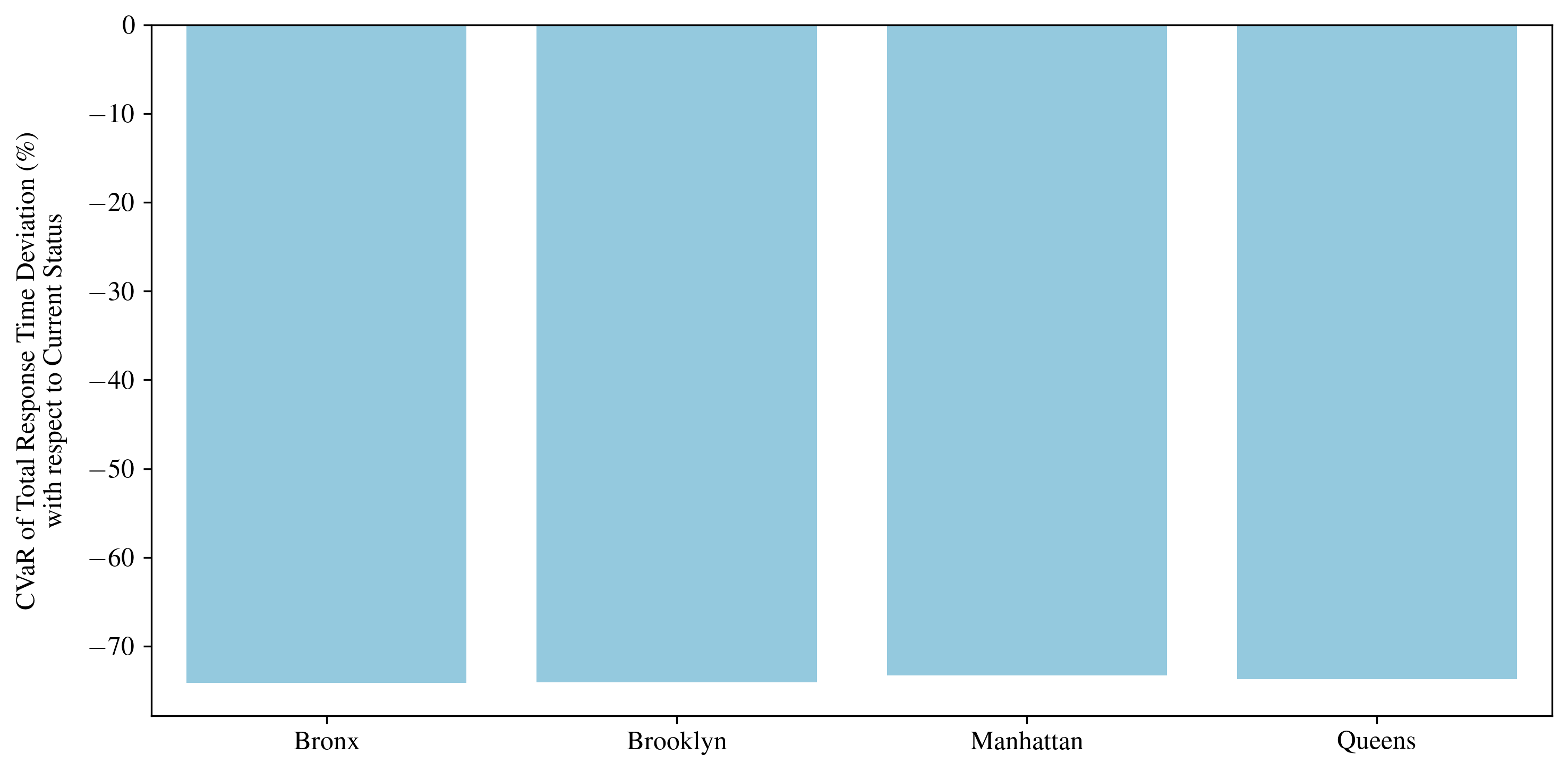}}
{Relative change in the CVaR of total response time by borough with respect to the current system.
Negative values indicate a reduction in extreme response times under the optimal design.\label{fig:barplot_cvar_borough}}
{}
\end{figure}

 \begin{figure}
 {\includegraphics[width=.75\textwidth]{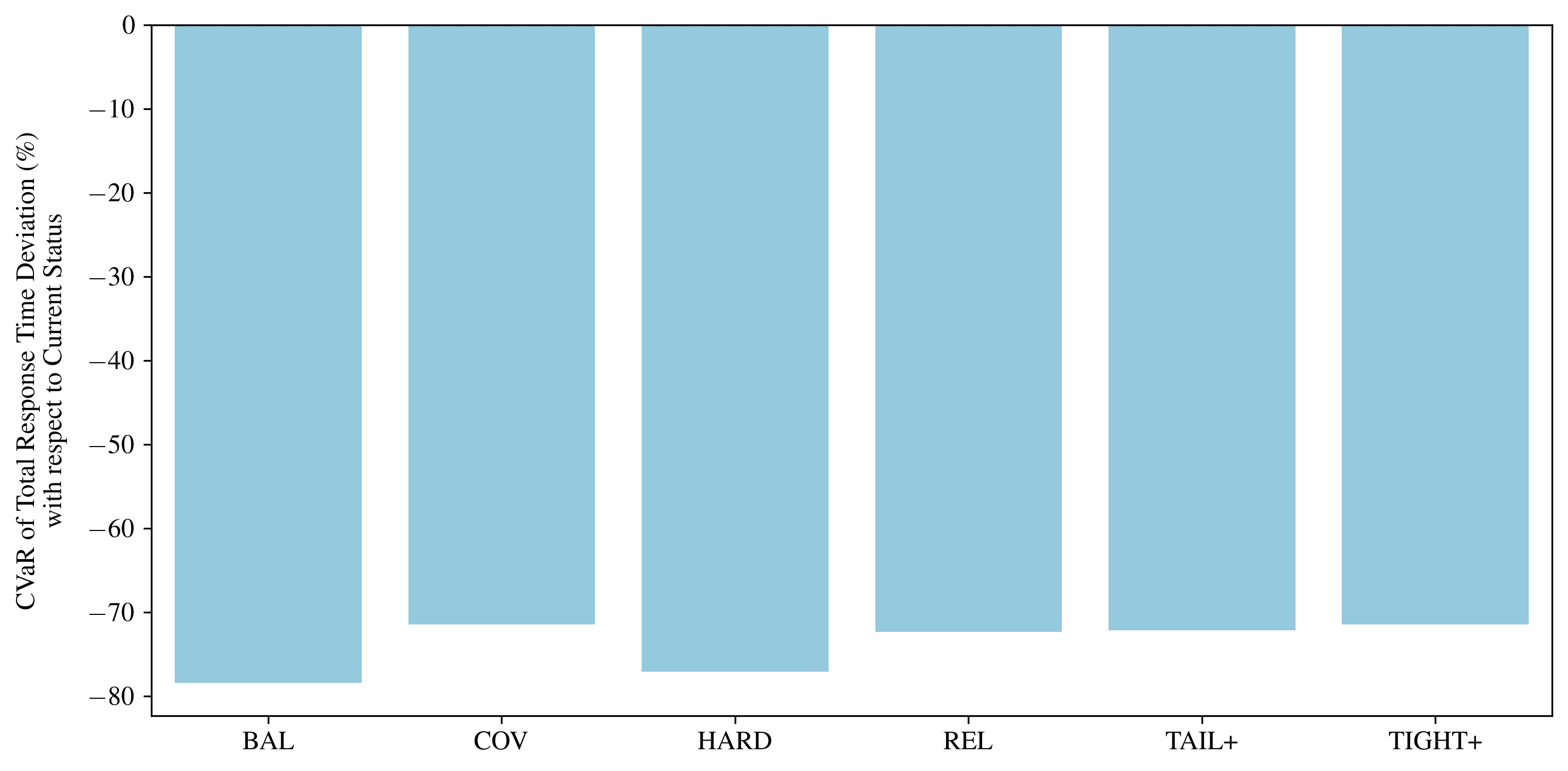}}
{Relative change in the CVaR of total response time by operational profile with respect to the current system.
Negative values indicate improved protection against extreme delays under the optimal design.\label{fig:barplot_cvar_profile}}
{}
\end{figure}

 \begin{figure}
 {\includegraphics[width=0.75\textwidth]{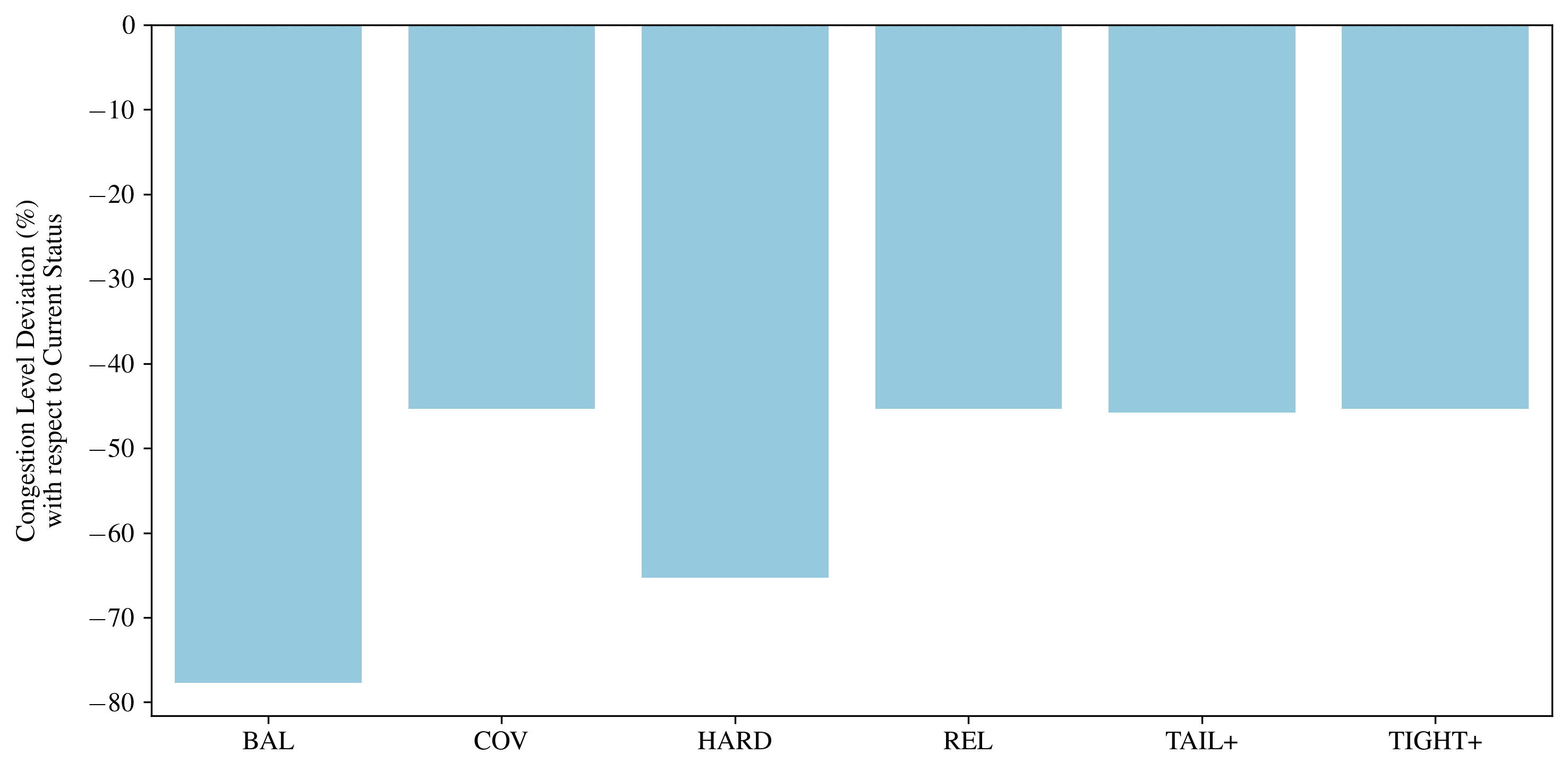}}
{Congestion level deviation with respect to the current system by profile.
Negative values indicate a reduction in congestion level under the optimal design.\label{fig:barplot_cong_profile}}
{}
\end{figure}

 \begin{figure}
 {\includegraphics[width=.75\textwidth]{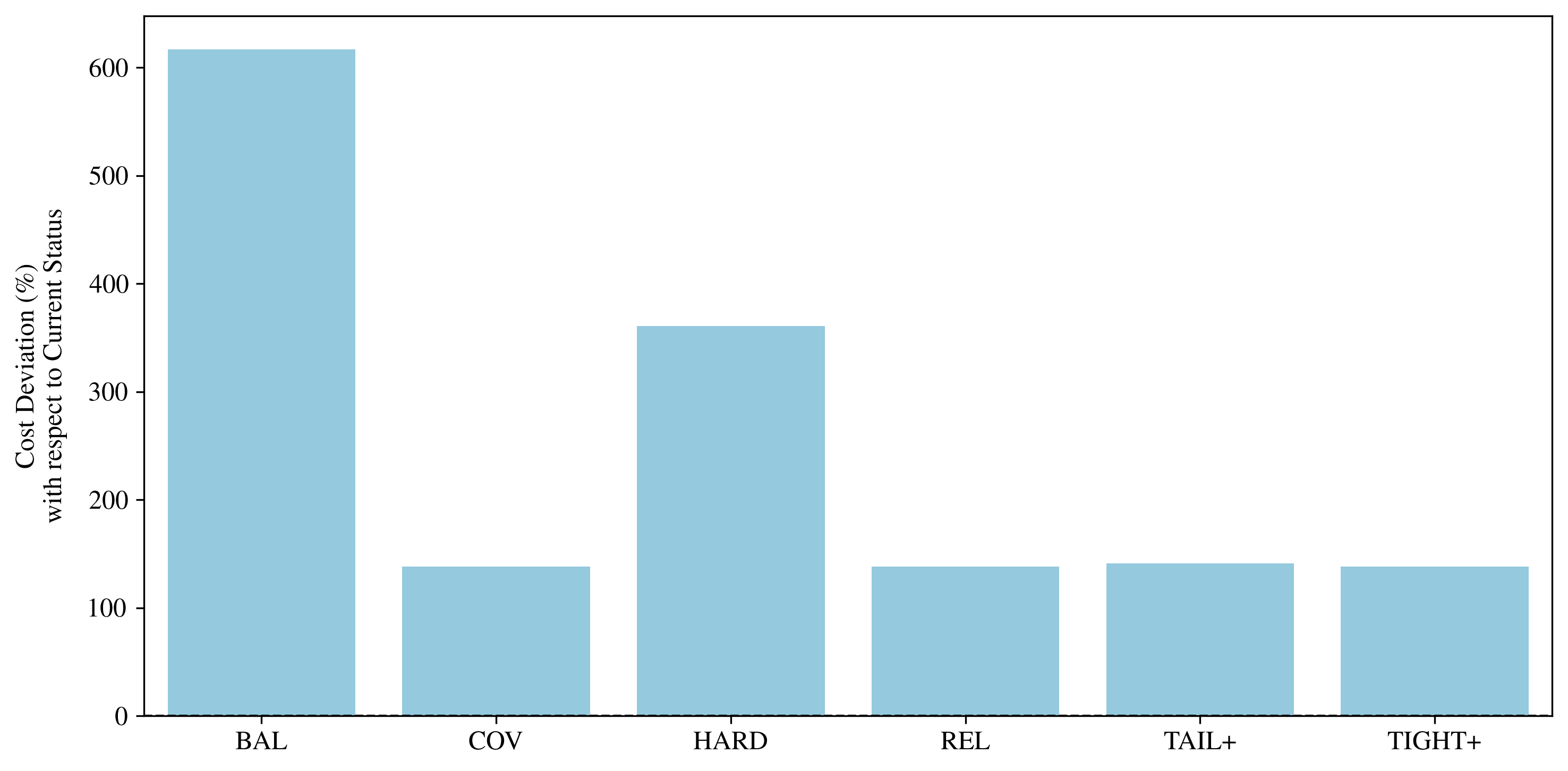}}
{Cost deviation with respect to the current system by operational profile.
Values above zero indicate an increase in cost under the optimal design.\label{fig:barplot_cost_profile}}
{}
\end{figure}

\bibliographystyle{plainnat}
 \bibliography{facility_chance}

\end{document}